\documentclass[a4paper,12pt]{article}
\usepackage[utf8]{inputenc}
\usepackage{mathtools}
\usepackage{amsmath,amssymb,amsthm,amsfonts}
\usepackage{euscript,graphicx}
\usepackage{longtable}
\usepackage{url}
\usepackage{subfigure}
\usepackage{paralist}
\usepackage{tikz}
\usetikzlibrary{calc}
\usetikzlibrary{decorations.pathmorphing}
\usetikzlibrary{arrows}
\usetikzlibrary{decorations.markings}
\usetikzlibrary{patterns}
\usetikzlibrary{intersections}

\usepackage[text={16cm,24cm},centering]{geometry}

\newtheorem{theorem}{Theorem}
\newtheorem{proposition}[theorem]{Proposition}
\newtheorem{lemma}[theorem]{Lemma}
\newtheorem{corollary}[theorem]{Corollary}

\newtheorem{definition}[theorem]{Definition}

\theoremstyle{definition}
\newtheorem{example}[theorem]{Example}

\DeclareMathOperator{\Aut}{Aut}
\DeclareMathOperator{\girth}{girth}

\title{Coronoids, Patches and Generalised Altans}
\author{ 
Nino Ba\v si\' c
\footnote{\textit{Faculty of Mathematics and Physics, University of Ljubljana, Slovenia.}
E-mail: \texttt{nino.basic@fmf.uni-lj.si}}
\and
Patrick W.\ Fowler
\footnote{\textit{Department of Chemistry, University of Sheffield, Sheffield, S3 7HF, UK.}
E-mail: \texttt{p.w.fowler@sheffield.ac.uk}}
\and
Toma\v z Pisanski
\footnote{\textit{FAMNIT, University of Primorska, Slovenia.}
E-mail: \texttt{tomaz.pisanski@upr.si}}}

\date{September 22, 2015}

\begin{document}

\nocite{*} 

\maketitle

\begin{abstract}
In this paper we revisit coronoids, in particular multiple coronoids. We consider
a mathematical formalisation of the theory of coronoid hydrocarbons that is solely
based on incidence between hexagons of the infinite hexagonal grid in the plane.
In parallel, we consider perforated patches, which generalise coronoids: in addition
to hexagons, other polygons may also be present. Just as coronoids may be considered
as benzenoids with holes, perforated patches are patches with holes. Both cases,
coronoids and perforated patches, admit a generalisation of the altan operation that
can be performed at several holes simultaneously. A formula for the number of
Kekul\'{e} structures of a generalised altan can be derived easily if the number of
Kekul\'{e} structures is known for the original graph. Pauling Bond Orders for
generalised altans are also easy to derive from those of the original graph.
\end{abstract}

\noindent \textbf{Keywords:}
altan, generalised altan, iterated altan, benzenoid, coronoid, patch, perforated patch,
Kekul\'{e} structure, Pauling Bond Order

\vspace{0.5\baselineskip}
\noindent \textbf{Math.\ Subj.\ Class.\ (2010):}
92E10, 
05C10, 
05C90 

\section{Introduction}
The term `altan' was recently coined \cite{monaco2012} to describe a particular type of
conjugated $\pi$-system, defined by a notional expansion of the annulene-like perimeter
of a parent hydrocarbon. Mathematical formalisation \cite{gutman2014a,gutman2014} gives
an operation that can be applied to \emph{any} planar graph to produce the altan of the parent graph,
and to predict consequent changes in various properties of mathematical/chemical interest.
In a recent paper \cite{basic2015}, basic properties of iterated altans were studied.
For previous work on altans from both chemical and mathematical perspectives, the reader is referred to
\cite{basic2015,dickensmallion2014a,dickensmallion2014,gutman2014a,gutman2014,monacomemoli2013,monacozanasi2009,monaco2013}.
In the present paper we apply successive altan operations, not to a single \emph{perimeter} 
(or \emph{peripheral root}, as it was called in \cite{basic2015}), but to a collection of
disjoint perimeters. In particular, a composite operation of this type applies well to general
coronoids \cite{cyvin1991,gutman1989}, which, unlike benzenoids, may possess more than
one perimeter. In addition to the outer perimeter they have a perimeter for each of the holes.  
We call this operation a \emph{generalised altan}. Owing to its generality it applies to
\emph{single} coronoids, i.e.,\ to coronoids possessing exactly one corona hole, and
\emph{multiple} coronoids, i.e.,\ coronoids possessing more than one corona hole. 
It seems that in the past investigations of coronoids mostly \emph{single coronoids} were
considered \cite{cyvin1991,gutman1989}.

Finally, we consider Kekul\'{e} structures \cite{cyvin1988} of generalised altans. It turns
out to determine the number of Kekul\'{e} structures of a generalised altan, if the number
of Kekul\'{e} structures are known for the original graph.

In a previous paper we shifted attention from benzenoids to the more general subcubic
planar graphs that we called \emph{patches}, which generalise the \emph{fullerene patches} of Graver et al.\ 
\cite{graver2003,gravergraves2010,gravergravesgraves2014,gravesgraves2014,graves2012,extendingpatches2015}.
In this paper, we similarly generalise coronoids to \emph{perforated patches}, i.e.,\ to
patches with several disjoint holes. 

\section{Hexagonal Systems, Coronoids and Benzenoids}
Traditionally, a \emph{benzenoid} is a collection of hexagons that constitute a simply connected
bounded region of the infinite hexagonal grid $\mathcal{H}$ in the Euclidean plane. Other equivalent
definitions are also possible. For a complete treatment of this topic, see \cite{gutman1989}.
Many authors consider benzenoids as plane graphs. Take a benzenoid. Note that one of its faces, called
the \emph{outer face}, is unbounded. Moreover, every edge is incident to exactly two distinct faces
(of which one may be the outer face). Every vertex is incident to 2 or 3 edges. Therefore, a benzenoid
graph is 2-connected and 2-edge-connected. In a chemical context, we use terms \emph{atom} and \emph{bond}
as synonyms for respective vertex and edge. Intuitively speaking, a \emph{coronoid} is a ``benzenoid with holes'', 
i.e.,\ a benzenoid with some internal bonds and atoms removed.
To precisely define the class of coronoids, some additional restrictions are needed. Normally, the resulting
structure must be composed entirely of hexagons and connected if it is to be of interest in the theory
of conjugated carbon frameworks.
\begin{figure}[!h]
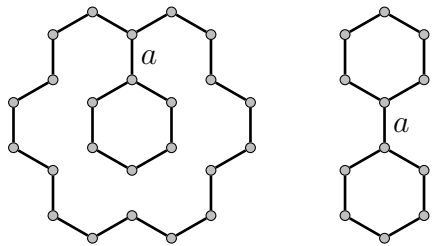

\centering
\input{img/figure_1a.tikz}
\qquad
\input{img/figure_1d.tikz}
\caption{These two plane graphs are not coronoids. The edge denoted by $a$ (in both examples) is not incident to two distinct faces.}
\label{fig:fig1}
\end{figure}
Although the two plane graphs on Figure \ref{fig:fig1} can be obtained from benzenoids by removing vertices
and edges, they are not coronoids. This motivates the following mathematical formalisation of coronoids and benzenoids.

In this paper, we will consider the infinite planar hexagonal grid $\mathcal{H}$ as a collection of hexagons.
Let $a$ and $b$ be two hexagons from $\mathcal{H}$. We say that $a$ and $b$ are \emph{adjacent}, and denote
this by $a \sim b$, if and only if they are different and share an edge. If two distinct hexagons share an edge
we call them \emph{neighbours}. The set of all neighbours of $a$ will be denoted $N(a)$. Note that each hexagon
of $\mathcal{H}$ has exactly 6 neighbours. Let $\mathcal{K} \subseteq \mathcal{H}$ be a \emph{hexagonal system},
i.e.,\ an arbitrary collection of hexagons from $\mathcal{H}$. $N_{\mathcal{K}}(a)$ will denote the set of those
hexagons of $\mathcal{K}$ that belong to $N(a)$, i.e.,\ $N_{\mathcal{K}}(a) \coloneqq N(a) \cap \mathcal{K}$. We
call them the \emph{neighbours of $a$ in $\mathcal{K}$}. Define equivalence relation $\equiv_{\mathcal{K}}$ as
follows: for $a, b \in \mathcal{K}$ it holds that $a \equiv_{\mathcal{K}} b$ if there exists a sequence
$c_0 = a, c_1, c_2, \ldots, c_m = b$ such that $c_{i-1} \sim c_i$ for $i = 1, 2, \ldots, m$ and
$c_i \in \mathcal{K}$ for $i = 0, 1, \ldots, m$. In particular, this means that it is possible to move from
hexagon $a$ to hexagon $b$ along a pathway composed of adjacent hexagons that all belong to $\mathcal{K}$
(see Figure \ref{fig:fig2}). Note that $a \equiv_\mathcal{K} b$ if $a \in N_{\mathcal{K}}(b)$ and $b \in \mathcal{K}$.
Also, it is easy to see that $\mathcal{K} \subseteq \mathcal{L}$ and $a \equiv_{\mathcal{K}} b$ imply $a \equiv_{\mathcal{L}} b$.
\begin{figure}[!h]
\centering
\input{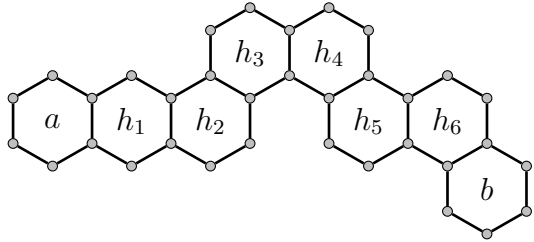}
\caption{A path of hexagons $h_1, h_2, \ldots, h_6$ joining hexagon $a$ to hexagon $b$.}
\label{fig:fig2}
\end{figure}
Hexagonal system $\mathcal{K}$ is naturally decomposed into equivalence classes $\{\mathcal{C}_i\}_{i \in C(\mathcal{K})}$,
called \emph{connected components}. Of course, $\mathcal{K} = \cup_{i \in C(\mathcal{K})} \mathcal{C}_i$. If $\mathcal{K}$
is finite, the number of its connected components, i.e.,\ the cardinality of $C(\mathcal{K})$, is also finite. A hexagonal
system is \emph{connected} if it comprises only one connected component.

\begin{lemma}
Let $\mathcal{K}$ be a hexagonal system and $\{\mathcal{C}_i\}_{i \in C(\mathcal{K})}$
its decomposition into equivalence classes. Let $\mathcal{L}$ be a connected hexagonal
system. If $\mathcal{L} \subseteq \mathcal{K}$ then $\mathcal{L} \subseteq \mathcal{C}_i$
for some $i \in C(\mathcal{K})$.
\end{lemma}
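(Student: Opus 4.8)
The plan is to lean entirely on the monotonicity of the equivalence relation recorded just before the statement, namely that $\mathcal{L} \subseteq \mathcal{K}$ together with $a \equiv_{\mathcal{L}} b$ forces $a \equiv_{\mathcal{K}} b$. The strategy is to fix a single ``anchor'' hexagon of $\mathcal{L}$, identify the component of $\mathcal{K}$ that contains it, and then argue that every other hexagon of $\mathcal{L}$ is dragged into that very same component.

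First I would dispose of the degenerate case. Since a \emph{connected} hexagonal system comprises exactly one equivalence class, it is in particular nonempty, so I may choose some $a \in \mathcal{L}$. Because $\mathcal{L} \subseteq \mathcal{K}$, this $a$ also lies in $\mathcal{K}$, hence in precisely one of the components; call it $a \in \mathcal{C}_{i_0}$ for a suitable $i_0 \in C(\mathcal{K})$. The index $i_0$ is the candidate component claimed by the lemma.

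Next I would establish $\mathcal{L} \subseteq \mathcal{C}_{i_0}$ by taking an arbitrary $b \in \mathcal{L}$. Connectedness of $\mathcal{L}$ yields $a \equiv_{\mathcal{L}} b$; applying the monotonicity remark to the inclusion $\mathcal{L} \subseteq \mathcal{K}$ then upgrades this to $a \equiv_{\mathcal{K}} b$. Since the classes of $\equiv_{\mathcal{K}}$ are by definition the connected components, $b$ belongs to the same class as $a$, i.e.\ $b \in \mathcal{C}_{i_0}$. As $b$ was arbitrary, this gives $\mathcal{L} \subseteq \mathcal{C}_{i_0}$, which is the desired conclusion.

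I do not expect a genuine obstacle: the whole content is already packaged in the monotonicity remark, so the argument is essentially a one-line transfer of equivalences. The only point demanding a moment's care is checking that ``connected'' guarantees $\mathcal{L} \neq \emptyset$, so that the anchor $a$ actually exists; should one instead allow the empty system as vacuously connected, the conclusion either holds vacuously or one simply selects any component of $\mathcal{K}$ (assuming $\mathcal{K} \neq \emptyset$).
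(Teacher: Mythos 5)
Your proof is correct and uses the same key step as the paper: transferring $a \equiv_{\mathcal{L}} b$ to $a \equiv_{\mathcal{K}} b$ via the monotonicity remark. The paper phrases it as a contradiction (two hexagons of $\mathcal{L}$ in distinct components would be forced into the same class), while you argue directly from an anchor hexagon, but these are the same argument in contrapositive and direct form.
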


\begin{proof}
Suppose there exist hexagons $h_i \in \mathcal{L} \cap \mathcal{C}_i$ and
$h_j \in \mathcal{L} \cap \mathcal{C}_j$. Connectedness of $\mathcal{L}$ implies
$h_i \equiv_{\mathcal{L}} h_j$. From $\mathcal{L} \subseteq \mathcal{K}$ it follows
that $h_i \equiv_{\mathcal{K}} h_j$. A contradiction.
\end{proof}

\begin{lemma}
\label{lemma:neighbours}
Let $\mathcal{K} \neq \emptyset$ be an arbitrary hexagonal system and $a \in \mathcal{K}$
any of its hexagons. Then hexagon $a$ belongs to some connected component $\mathcal{C}_i$
of $\mathcal{K}$. Let $b \in N(a)$ be any of the neighbours of $a$ in $\mathcal{H}$. Then
either $b \in \mathcal{C}_i$ or $b \in \mathcal{K}^\complement$. In other words, no hexagon
of $\mathcal{C}_i$ is adjacent to a hexagon of $\mathcal{C}_j$ if $i \neq j$.
\end{lemma}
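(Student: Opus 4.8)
The plan is to argue by a simple case distinction on whether the neighbour $b$ lies inside $\mathcal{K}$ or not, leaning on the observation recorded just before the lemma: namely that a single adjacency step inside $\mathcal{K}$ already forces two hexagons into the same equivalence class. The first assertion is immediate: since $a \in \mathcal{K}$ and $\equiv_{\mathcal{K}}$ decomposes $\mathcal{K}$ into equivalence classes $\{\mathcal{C}_i\}_{i \in C(\mathcal{K})}$, the hexagon $a$ lies in exactly one of them, which we call $\mathcal{C}_i$.

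For the dichotomy, fix $b \in N(a)$. If $b \notin \mathcal{K}$ then $b \in \mathcal{K}^\complement$ and there is nothing more to prove. Otherwise $b \in \mathcal{K}$, and since $b \in N(a)$ means $a \sim b$ with both $a, b \in \mathcal{K}$, we have $a \in N_{\mathcal{K}}(b)$. By the remark preceding the lemma, $a \in N_{\mathcal{K}}(b)$ together with $b \in \mathcal{K}$ yields $a \equiv_{\mathcal{K}} b$; concretely, the two-term sequence $c_0 = b, c_1 = a$ is already a valid $\mathcal{K}$-path witnessing the equivalence. Hence $b$ lies in the same equivalence class as $a$, that is, $b \in \mathcal{C}_i$.

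The reformulation at the end then follows formally. Suppose, for contradiction, that some hexagon $a \in \mathcal{C}_i$ were adjacent to some $b \in \mathcal{C}_j$ with $i \neq j$. Then $b \in N(a) \cap \mathcal{K}$, so the dichotomy just established forces $b \in \mathcal{C}_i$. Since distinct equivalence classes are disjoint, this contradicts $b \in \mathcal{C}_j$.

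As for difficulty, there is essentially no obstacle here: the content is entirely bookkeeping with the definition of $\equiv_{\mathcal{K}}$. The only point that warrants a moment's care is the symmetry of the adjacency relation $\sim$, so that $b \in N(a)$ indeed gives $a \in N(b)$ and hence $a \in N_{\mathcal{K}}(b)$; this is immediate from the defining condition that $a$ and $b$ share an edge.
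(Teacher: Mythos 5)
Your proof is correct and follows essentially the same route as the paper: the key observation in both is that a single adjacency between two hexagons of $\mathcal{K}$ already witnesses $a \equiv_{\mathcal{K}} b$, so neighbours inside $\mathcal{K}$ cannot lie in different equivalence classes. The only cosmetic difference is that you argue directly by case distinction while the paper phrases the same step as a proof by contradiction.
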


\begin{proof}
Suppose there is a hexagon $b \in \mathcal{C}_j$, $i \neq j$, such that $b \in N(a)$.
Then $a$ and $b$ are in the same equivalence class by the definition of $\equiv_\mathcal{K}$.
A contradiction.
\end{proof}

\begin{lemma}
\label{lemma:adjacencylemma}
Let $\mathcal{K}$ be an arbitrary non-empty hexagonal system that is a proper subset of
$\mathcal{H}$, i.e.,\ $\emptyset \neq \mathcal{K} \subset \mathcal{H}$. Let $\mathcal{C}_a$
be any connected component of the complement $\mathcal{K}^\complement$. Then there exists
a hexagon $\tilde{a} \in \mathcal{C}_a$ that is adjacent to some hexagon in $\mathcal{K}$.
\end{lemma}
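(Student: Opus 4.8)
The plan is to exploit the connectedness of the whole grid $\mathcal{H}$: since $\mathcal{H}$ is a single connected hexagonal system, any two of its hexagons are joined by a path of pairwise adjacent hexagons. First I would choose a hexagon $c \in \mathcal{C}_a$ (which exists because a connected component is non-empty) and a hexagon $k \in \mathcal{K}$ (which exists because $\mathcal{K} \neq \emptyset$). Connectedness of $\mathcal{H}$ then supplies a path $c = p_0 \sim p_1 \sim \cdots \sim p_n = k$ with every $p_i \in \mathcal{H}$. The idea is that such a path must at some moment cross from the complement into $\mathcal{K}$, and I want to capture the hexagon at which this crossing happens.

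Next I would walk along the path and locate the first entry into $\mathcal{K}$. Since $p_0 = c \in \mathcal{C}_a \subseteq \mathcal{K}^\complement$ we have $p_0 \notin \mathcal{K}$, while $p_n = k \in \mathcal{K}$; hence the set $\{\, i : p_i \in \mathcal{K} \,\}$ is non-empty and has a least element $j$, necessarily with $j \geq 1$. By minimality of $j$, the predecessor $p_{j-1}$ lies in $\mathcal{K}^\complement$ and satisfies $p_{j-1} \sim p_j \in \mathcal{K}$, so $p_{j-1}$ is a complement hexagon adjacent to a hexagon of $\mathcal{K}$. The natural candidate for $\tilde{a}$ is therefore $p_{j-1}$.

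The one point that requires care — and what I expect to be the main obstacle — is verifying that $p_{j-1}$ lies in the prescribed component $\mathcal{C}_a$, rather than merely somewhere in $\mathcal{K}^\complement$. For this I would observe that, again by minimality of $j$, the whole prefix $p_0, p_1, \ldots, p_{j-1}$ consists of hexagons of $\mathcal{K}^\complement$, with consecutive ones adjacent. This is exactly a witnessing sequence for the equivalence relation $\equiv_{\mathcal{K}^\complement}$, so $p_0 \equiv_{\mathcal{K}^\complement} p_{j-1}$. Since the equivalence classes of $\equiv_{\mathcal{K}^\complement}$ are precisely the connected components of $\mathcal{K}^\complement$ and $p_0 = c \in \mathcal{C}_a$, I conclude $p_{j-1} \in \mathcal{C}_a$. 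Setting $\tilde{a} = p_{j-1}$ then yields a hexagon of $\mathcal{C}_a$ adjacent to a hexagon of $\mathcal{K}$, completing the argument.
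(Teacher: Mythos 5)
Your proof is correct, and it takes a cleaner route than the paper's. The paper grows balls $\mathcal{P}_0 = \{a\}$, $\mathcal{P}_k = \mathcal{P}_{k-1} \cup N(\mathcal{P}_{k-1})$ around a hexagon $a \in \mathcal{C}_a$ and argues by cases: if some ball escapes $\mathcal{C}_a$, the first escaping hexagon must lie in $\mathcal{K}$ (this step invokes the neighbours lemma, Lemma~2 of the paper), and its predecessor in the previous ball is the desired $\tilde{a}$; if no ball ever escapes, the fact $\bigcup_{n} \mathcal{P}_n = \mathcal{H}$ together with $\mathcal{K} \neq \emptyset$ gives a contradiction. You instead fix a single path of adjacent hexagons from $c \in \mathcal{C}_a$ to $k \in \mathcal{K}$, take the first entry into $\mathcal{K}$, and verify that the predecessor $p_{j-1}$ lies in $\mathcal{C}_a$ directly from the definition of components as equivalence classes of $\equiv_{\mathcal{K}^\complement}$, the prefix of the path being the witnessing sequence. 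This avoids both the case split and any appeal to Lemma~2. Note that both arguments lean on the same unproved geometric input, namely connectedness of $\mathcal{H}$ as a hexagonal system: the paper uses it as $\bigcup_n \mathcal{P}_n = \mathcal{H}$ (and explicitly admits, in the later section on patches, that this is assumed without proof), while you use it as the existence of your path; so your proof meets the paper's own standard of rigor. One thing the paper's formulation buys that yours does not: the family $\{\mathcal{P}_k\}$ constructed there is reused verbatim in the proof of Lemma~5 (the decomposition lemma), so the heavier machinery is amortized over a later result, whereas your path argument is self-contained but provides nothing to reuse.
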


\begin{proof}
Let $N(\mathcal{R})$ denote the set of all hexagons that are not contained in
$\mathcal{R}$ and are adjacent to some member of $\mathcal{R}$, i.e.,\ 
$N(\mathcal{R}) = \left( \cup_{r \in \mathcal{R}} N(r) \right) \setminus \mathcal{R}$.
Let $a \in \mathcal{C}_a$ and let $\mathcal{P}_0 = \{a\}$. Define
$\mathcal{P}_k = \mathcal{P}_{k-1} \cup N(\mathcal{P}_{k-1})$ for $k \geq 1$.

It is clear that $\mathcal{P}_0 \subseteq \mathcal{C}_a$. Let $n$ be the smallest number
such that $\mathcal{P}_n \nsubseteq  \mathcal{C}_a$. Suppose that such a number $n$ exists.
Note that $\mathcal{P}_{n-1} \subseteq \mathcal{C}_a$. Then $\mathcal{P}_n$ contains some
hexagon $h \notin \mathcal{C}_a$. By Lemma~\ref{lemma:neighbours}, $h \in \mathcal{K}$.
By construction of family $\{\mathcal{P}_k\}_{k}$, there exists a hexagon
$\tilde{a} \in \mathcal{P}_{n-1}$, such that $h \in N(\tilde{a})$.

Now suppose that the desired number $n$ does not exist. This means that
$\mathcal{P}_{n} \subseteq \mathcal{C}_a$ for all $n$. But
$\cup_{n=0}^{\infty} \mathcal{P}_n = \mathcal{H}$, i.e.,\ for every $h \in \mathcal{H}$
there exists some number $m$, s.t.\ $h \in \mathcal{P}_m$. Since $\mathcal{K}$ is non-empty
there is some hexagon $k \in \mathcal{K} \subset \mathcal{H}$ and therefore
$k \in \mathcal{P}_l$ for some number $l$, which is a contradiction.
\end{proof}

\begin{definition}
\label{def:coronoid}
A finite connected hexagonal system $\mathcal{K}$ is called a \emph{(general) coronoid}.
\end{definition}

\begin{definition}
\label{def:benzenoid}
A finite connected hexagonal system $\mathcal{K}$ whose complement $\mathcal{K}^\complement$
is also connected is called a \emph{benzenoid}.
\end{definition}

\noindent We prove a useful lemma:

\begin{lemma}
\label{lemma:decomposition}
Let $\mathcal{K}$ be any finite hexagonal system. Then the complement $\mathcal{K}^\complement$
of $\mathcal{K}$ consists of finitely many, say $d+1$, $d \geq 0$, connected components:
$$
\mathcal{K}^\complement = \mathcal{C}_\infty \sqcup \mathcal{C}_1 \sqcup \mathcal{C}_2 \sqcup \cdots \sqcup \mathcal{C}_d.
$$
All of these components but one, denoted by $\mathcal{C}_\infty$, is finite. Each of the
finite components $\mathcal{C}_i$, $1 \leq i \leq d$, is a coronoid.
\end{lemma}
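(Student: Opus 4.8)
The plan is to establish the three assertions in turn: that $\mathcal{K}^\complement$ has only finitely many components, that exactly one of them is infinite, and that each finite component is a coronoid. The last of these is almost immediate once the other two are in hand. A connected component of $\mathcal{K}^\complement$ is, by its definition as an equivalence class of $\equiv_{\mathcal{K}^\complement}$, a connected hexagonal system, since any two of its hexagons are joined by a pathway of adjacent hexagons that itself stays inside the class. Hence any component that happens to be finite is a finite connected hexagonal system, i.e.\ a coronoid by Definition~\ref{def:coronoid}. (The degenerate case $\mathcal{K} = \emptyset$ gives $\mathcal{K}^\complement = \mathcal{H}$, a single infinite component with $d = 0$, so I assume $\mathcal{K} \neq \emptyset$ henceforth; note that $\mathcal{K} \subset \mathcal{H}$ is automatic because $\mathcal{K}$ is finite and $\mathcal{H}$ is not.)

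First I would bound the number of components using Lemma~\ref{lemma:adjacencylemma}. Let $N(\mathcal{K}) = \left( \cup_{r \in \mathcal{K}} N(r) \right) \setminus \mathcal{K}$ denote the hexagons outside $\mathcal{K}$ that are adjacent to it. Since $\mathcal{K}$ is finite and each hexagon has exactly six neighbours, $N(\mathcal{K})$ is finite, with $|N(\mathcal{K})| \leq 6 |\mathcal{K}|$. By Lemma~\ref{lemma:adjacencylemma}, every connected component of $\mathcal{K}^\complement$ contains at least one hexagon of $N(\mathcal{K})$; since distinct components are disjoint, they contribute distinct hexagons of $N(\mathcal{K})$. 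Therefore the number of components is at most $|N(\mathcal{K})|$ and is in particular finite.

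Next I would isolate the infinite component. Fixing an origin hexagon and letting $B_R$ be the ball of all hexagons within grid-distance $R$ of it, I choose $R$ large enough that $\mathcal{K} \subseteq B_R$, so that $\mathcal{H} \setminus B_R \subseteq \mathcal{K}^\complement$. The crucial geometric fact is that the exterior $\mathcal{H} \setminus B_R$ is connected: the hexagons at each fixed distance $r > R$ form a single ring that is connected in its own right, and consecutive rings are adjacent, so the whole exterior lies in one equivalence class of $\equiv_{\mathcal{K}^\complement}$. This class is infinite, which yields at least one infinite component $\mathcal{C}_\infty$. Conversely, any infinite component cannot be contained in the finite set $B_R$, so it meets $\mathcal{H} \setminus B_R$ and hence coincides with $\mathcal{C}_\infty$; thus $\mathcal{C}_\infty$ is the unique infinite component, and every other component $\mathcal{C}_1, \ldots, \mathcal{C}_d$ is finite. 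Combining this with the previous paragraph shows that $d$ is finite and, by the observation in the first paragraph, that each finite $\mathcal{C}_i$ is a coronoid, giving the claimed disjoint decomposition.

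I expect the main obstacle to be the geometric claim that the exterior $\mathcal{H} \setminus B_R$ of a hexagonal ball is connected. This is essentially a discrete Jordan-curve phenomenon for the hexagonal grid and needs the explicit ring structure (or an equivalent argument) to be pinned down rigorously, whereas the counting of components and the identification of the finite ones with coronoids follow directly from Lemma~\ref{lemma:adjacencylemma} and the definitions.
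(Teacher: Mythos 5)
Your proposal is correct and follows essentially the same route as the paper: enclose $\mathcal{K}$ in a finite ball, use the connectedness of the ball's exterior (which lies inside $\mathcal{K}^\complement$) to identify the unique infinite component $\mathcal{C}_\infty$, and observe that the remaining components are finite connected hexagonal systems, hence coronoids by definition. The only differences are minor: you count components via Lemma~\ref{lemma:adjacencylemma} (each component of $\mathcal{K}^\complement$ contains a hexagon of the finite set $N(\mathcal{K})$, giving the bound $6|\mathcal{K}|$) where the paper instead notes that all finite components lie inside the ball $\mathcal{P}_n$; and you explicitly isolate and sketch, via concentric rings, the connectedness of the ball's exterior, which the paper merely asserts --- and indeed later concedes (end of Section~3) rests implicitly on the Jordan curve theorem.
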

\noindent Component $\mathcal{C}_\infty$ is called the \emph{exterior} of $\mathcal{K}$ and
each $\mathcal{C}_i$, $1 \leq i \leq d$, is called a \emph{corona hole}. In the above
expression $\sqcup$ stands for disjoint union. The size of a coronoid $\mathcal{K}$, denoted
$|\mathcal{K}|$, is the cardinality of the set $\mathcal{K}$, i.e.,\ the number of hexagons
it consists.
\begin{proof}
Let $a$ be an arbitrary hexagon of $\mathcal{H}$. Let the family $\{\mathcal{P}_i\}_{i=0}^{\infty}$
be as defined in the proof of Lemma~\ref{lemma:adjacencylemma}. Since $\mathcal{K}$ is finite,
there exists $n \in \mathbb{N}$ such that $\mathcal{K} \subseteq \mathcal{P}_n$. (More precisely,
for every $k \in \mathcal{K}$ there exists $n_k$ such that $k \in \mathcal{P}_{n_k}$. Take
$n \coloneqq \max_{k \in \mathcal{K}} n_k$.) The complement of $\mathcal{P} \coloneqq \mathcal{P}_{n}$
is contained in $\mathcal{K}^\complement$. Because $\mathcal{P}^\complement$ is connected, it
is contained in a connected components of $\mathcal{K}^\complement$. Denote this component by
$\mathcal{C}_\infty$. Note that this is the infinite component. In addition to $\mathcal{C}_\infty$,
$\mathcal{K}^\complement$ may have more connected components. All of them (if there are any)
are contained in $\mathcal{P}$. Each is finite, because $\mathcal{P}$ is finite. Their number
is bounded by the number of hexagons in $\mathcal{P}$. Therefore, $\mathcal{K}^\complement$ has
finitely many connected components.

The fact that all finite components are coronoids is clear from the definition of coronoids.
\end{proof}

\noindent The above lemma does not apply to infinite hexagonal systems.
(See Figure \ref{fig:infinitelem5} for examples.)

\begin{figure}[!hbt]
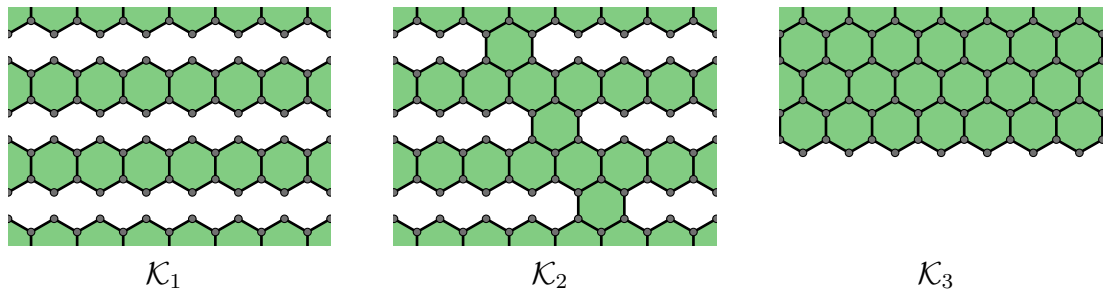

\centering
\begin{tabular}{ccc}
\input{img/coronoid_inf_01.tikz} & \input{img/coronoid_inf_02.tikz} & \input{img/coronoid_inf_03.tikz} \\
$\mathcal{K}_1$ & $\mathcal{K}_2$  & $\mathcal{K}_3$ 
\end{tabular}
\caption{$\mathcal{K}_1$ consists of infinitely many disjoint infinite lines of hexagons. 
Its complement has infinitely many connected components that are themselves infinite.
$\mathcal{K}_2$ is obtained from $\mathcal{K}_1$ by adding another line of hexagons with
a different slope. The hexagonal system $\mathcal{K}_2$ is a connected example. Hexagonal
system $\mathcal{K}_3$ (half-plane) is infinite and its complement $\mathcal{K}_3^\complement$
has a single connected component. In fact, a connected hexagonal system with arbitrary many
finite and arbitrary many infinite connected components can be obtained.}
\label{fig:infinitelem5}
\end{figure}

\begin{lemma}
\label{lemma:pathlemma}
Let $\mathcal{K}$ be a coronoid and let $\mathcal{C}_a$ and $\mathcal{C}_b$ be two connected
components of its complement $\mathcal{K}^\complement$. Let $a \in \mathcal{C}_a$ and
$b \in \mathcal{C}_b$. Then $a \equiv_{\mathcal{K} \cup \mathcal{C}_a \cup \mathcal{C}_b} b$.
\end{lemma}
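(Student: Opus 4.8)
The plan is to connect both $a$ and $b$ into the connected core $\mathcal{K}$ and then splice the resulting equivalences together by transitivity. Write $\mathcal{L} := \mathcal{K} \cup \mathcal{C}_a \cup \mathcal{C}_b$ and observe that each of $\mathcal{K}$, $\mathcal{C}_a$, $\mathcal{C}_b$ is a subset of $\mathcal{L}$. The key tool I would exploit is the monotonicity remark noted just before the figures, namely that $\mathcal{K}' \subseteq \mathcal{L}$ together with $x \equiv_{\mathcal{K}'} y$ yields $x \equiv_{\mathcal{L}} y$; this lets me establish equivalences inside the small connected pieces and then transfer them to $\mathcal{L}$ for free.

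First I would produce the two bridging hexagons. Since $\mathcal{K}$ is a coronoid, it is connected and finite, hence a non-empty proper subset of the infinite grid $\mathcal{H}$, so Lemma~\ref{lemma:adjacencylemma} applies to each connected component of $\mathcal{K}^\complement$. Applying it to $\mathcal{C}_a$ and then to $\mathcal{C}_b$ yields hexagons $\tilde{a} \in \mathcal{C}_a$ and $\tilde{b} \in \mathcal{C}_b$ that are adjacent, respectively, to hexagons $k_a, k_b \in \mathcal{K}$.

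Then I would assemble the chain of equivalences inside $\mathcal{L}$:
\[
a \equiv_{\mathcal{L}} \tilde{a} \equiv_{\mathcal{L}} k_a \equiv_{\mathcal{L}} k_b \equiv_{\mathcal{L}} \tilde{b} \equiv_{\mathcal{L}} b .
\]
Here $a \equiv_{\mathcal{C}_a} \tilde{a}$ and $\tilde{b} \equiv_{\mathcal{C}_b} b$ follow from connectedness of the components $\mathcal{C}_a$ and $\mathcal{C}_b$, and pass to $\equiv_{\mathcal{L}}$ by monotonicity. The single steps $\tilde{a} \equiv_{\mathcal{L}} k_a$ and $k_b \equiv_{\mathcal{L}} \tilde{b}$ are immediate from $\tilde{a} \sim k_a$ and $\tilde{b} \sim k_b$ with all four hexagons lying in $\mathcal{L}$ (the one-step case of the defining note $x \equiv_{\mathcal{L}} y$ when $x \in N_{\mathcal{L}}(y)$). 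Finally $k_a \equiv_{\mathcal{K}} k_b$ holds because $\mathcal{K}$ is connected, again passing to $\equiv_{\mathcal{L}}$ by monotonicity. Transitivity of the equivalence relation $\equiv_{\mathcal{L}}$ then gives $a \equiv_{\mathcal{L}} b$, which is exactly the assertion.

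The argument is essentially routine once this decomposition is in place; the only genuine step is recognising that Lemma~\ref{lemma:adjacencylemma} is precisely what guarantees that each complement component --- whether a corona hole or the exterior $\mathcal{C}_\infty$ --- touches $\mathcal{K}$, thereby furnishing the bridges $\tilde{a}$ and $\tilde{b}$. No case distinction on the type of component is required, and the degenerate situation $\mathcal{C}_a = \mathcal{C}_b$ is subsumed trivially, since connectedness of a single component already gives $a \equiv_{\mathcal{C}_a} b$.
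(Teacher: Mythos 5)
Your proof is correct and follows essentially the same route as the paper's: both invoke Lemma~\ref{lemma:adjacencylemma} to obtain bridging hexagons $\tilde{a} \in \mathcal{C}_a$ and $\tilde{b} \in \mathcal{C}_b$ adjacent to hexagons $k_a, k_b \in \mathcal{K}$, then chain the local equivalences $a \equiv_{\mathcal{C}_a} \tilde{a}$, $k_a \equiv_{\mathcal{K}} k_b$, $\tilde{b} \equiv_{\mathcal{C}_b} b$ together inside $\mathcal{K} \cup \mathcal{C}_a \cup \mathcal{C}_b$. Your version is merely more explicit about the monotonicity step that the paper leaves implicit.
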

\noindent Components $\mathcal{C}_a$ and $\mathcal{C}_b$ in the above lemma may be either two corona holes
or one corona hole and the exterior of $\mathcal{K}$. 

\begin{proof}
By Lemma~\ref{lemma:adjacencylemma} there exists $\tilde{a} \in \mathcal{C}_a$, such that
$\tilde{a} \in N(k_a)$ for some $k_a \in \mathcal{K}$. By the same lemma, there exists
$\tilde{b} \in \mathcal{C}_b$, such that $\tilde{b} \in N(k_b)$ for some $k_b \in \mathcal{K}$.
Since $\mathcal{C}_a$ is a connected component, $a \equiv_{\mathcal{C}_a} \tilde{a}$.
Similarly, $b \equiv_{\mathcal{C}_b} \tilde{b}$. From $\tilde{a} \equiv_{\mathcal{K} \cup \mathcal{C}_a} k_a$,
$\tilde{b} \equiv_{\mathcal{K} \cup \mathcal{C}_b} k_b$ and $k_a \equiv_{\mathcal{K}} k_b$,
it follows that $a \equiv_{\mathcal{K} \cup \mathcal{C}_a \cup \mathcal{C}_b} b$.
\end{proof}

\begin{corollary}
\label{cor:pathcor}
Let $\mathcal{K}$ be a coronoid and let $\mathcal{C}_a$ be a connected component of its
complement $\mathcal{K}^\complement$. Let $a \in \mathcal{C}_a$ and $k \in \mathcal{K}$.
Then $a \equiv_{\mathcal{K} \cup \mathcal{C}_a} k$.
\end{corollary}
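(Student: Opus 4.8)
The plan is to specialise the argument already used in the proof of Lemma~\ref{lemma:pathlemma}. I would not invoke that lemma as a black box — it connects two hexagons lying in \emph{two} complement components, whereas here the target $k$ lies in $\mathcal{K}$ itself — but instead reassemble its three constituent pieces into a single chain from $a$ to $k$ that stays inside $\mathcal{K} \cup \mathcal{C}_a$. The guiding idea is that any hexagon of a complement component can be reached from $\mathcal{K}$ by first walking inside $\mathcal{K}$, then crossing a single shared edge into $\mathcal{C}_a$, and finally walking inside $\mathcal{C}_a$.

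Concretely, I would proceed as follows. First, since $\mathcal{K}$ is a coronoid it is finite and non-empty (it contains $k$), hence $\emptyset \neq \mathcal{K} \subset \mathcal{H}$, so Lemma~\ref{lemma:adjacencylemma} applies to the component $\mathcal{C}_a$ and yields a hexagon $\tilde{a} \in \mathcal{C}_a$ with $\tilde{a} \in N(k_a)$ for some $k_a \in \mathcal{K}$. Second, connectedness of the component $\mathcal{C}_a$ gives $a \equiv_{\mathcal{C}_a} \tilde{a}$. Third, from $\tilde{a} \sim k_a$ with $\tilde{a}, k_a \in \mathcal{K} \cup \mathcal{C}_a$ I obtain the single crossing step $\tilde{a} \equiv_{\mathcal{K} \cup \mathcal{C}_a} k_a$. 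Fourth, since a coronoid is connected (Definition~\ref{def:coronoid}), $k \equiv_{\mathcal{K}} k_a$.

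It remains only to glue these together. Using the monotonicity property noted in the text (namely $\mathcal{R} \subseteq \mathcal{S}$ and $x \equiv_{\mathcal{R}} y$ imply $x \equiv_{\mathcal{S}} y$) together with $\mathcal{C}_a \subseteq \mathcal{K} \cup \mathcal{C}_a$ and $\mathcal{K} \subseteq \mathcal{K} \cup \mathcal{C}_a$, each of the relations $a \equiv_{\mathcal{C}_a} \tilde{a}$ and $k \equiv_{\mathcal{K}} k_a$ lifts to an equivalence under $\equiv_{\mathcal{K} \cup \mathcal{C}_a}$. Transitivity of $\equiv_{\mathcal{K} \cup \mathcal{C}_a}$ then chains $a \equiv \tilde{a} \equiv k_a \equiv k$, giving $a \equiv_{\mathcal{K} \cup \mathcal{C}_a} k$ as required.

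I do not anticipate a genuine obstacle: the substantive content is already contained in Lemma~\ref{lemma:adjacencylemma} (existence of a boundary-adjacent hexagon of $\mathcal{C}_a$) and in the definition of a coronoid (connectedness of $\mathcal{K}$). The only points demanding care are administrative — checking that the hypotheses of Lemma~\ref{lemma:adjacencylemma} are met (that $\mathcal{K}$ is a non-empty proper subset of $\mathcal{H}$, which is automatic for a finite coronoid containing $k$), and keeping track of the ambient set with respect to which each equivalence is asserted, so that the final concatenation legitimately takes place inside $\mathcal{K} \cup \mathcal{C}_a$.
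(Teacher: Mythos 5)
Your proposal is correct and follows essentially the same route as the paper's own proof: the identical chain $a \equiv_{\mathcal{C}_a} \tilde{a}$, $\tilde{a} \equiv_{\mathcal{K} \cup \mathcal{C}_a} k_a$, $k_a \equiv_{\mathcal{K}} k$, glued by monotonicity and transitivity of the equivalence relation. The only cosmetic difference is that you invoke Lemma~\ref{lemma:adjacencylemma} directly (verifying its hypotheses), whereas the paper cites the existence of $\tilde{a}$ as already established in the proof of Lemma~\ref{lemma:pathlemma} --- the same substantive content either way.
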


\begin{proof}
In the proof of Lemma~\ref{lemma:pathlemma} we have already shown the existance of
$\tilde{a} \in \mathcal{C}_a$, such that $\tilde{a} \in N(k_a)$ for some $k_a \in \mathcal{K}$.
From $a \equiv_{\mathcal{C}_a} \tilde{a}$, $\tilde{a} \equiv_{\mathcal{K} \cup \mathcal{C}_a} k_a$
and $k_a \equiv_{\mathcal{K}} k$, it follows that $a \equiv_{\mathcal{K} \cup \mathcal{C}_a} k$.
\end{proof}

\begin{theorem}
\label{thm:decompositionthm}
Let $\mathcal{K}$ be a coronoid. The complement $\mathcal{K}^\complement$ of $\mathcal{K}$ has a
finite number $b(\mathcal{K}) \coloneqq d + 1$, $d \geq 0$, of connected components
$\mathcal{B}_\infty, \mathcal{B}_1, \mathcal{B}_2, \ldots, \mathcal{B}_d$ such that
\begin{equation}
\mathcal{K}^\complement = \mathcal{B}_\infty \sqcup \mathcal{B}_1 \sqcup \mathcal{B}_2 \sqcup \cdots \sqcup \mathcal{B}_d.
\label{eq:decomp}\tag{$\star$}
\end{equation}
Exactly one component, denoted $\mathcal{B}_\infty$, is infinite and the other $d$ components
are finite, each being a benzenoid. Moreover, $\mathcal{B}_\infty^\complement$ is also a benzenoid.
\end{theorem}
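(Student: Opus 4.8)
The plan is to build on Lemma~\ref{lemma:decomposition}, which already does most of the bookkeeping: it guarantees that $\mathcal{K}^\complement$ splits into finitely many connected components, exactly one of which (to be named $\mathcal{B}_\infty$) is infinite, with the remaining $d$ components $\mathcal{B}_1, \ldots, \mathcal{B}_d$ finite and each a coronoid. Thus the decomposition~\eqref{eq:decomp}, the count $b(\mathcal{K}) = d+1$, and the infinite/finite dichotomy come for free. The only genuinely new assertions are that each finite $\mathcal{B}_i$ is in fact a \emph{benzenoid} (not merely a coronoid) and that $\mathcal{B}_\infty^\complement$ is a benzenoid as well. By Definition~\ref{def:benzenoid}, upgrading ``coronoid'' to ``benzenoid'' amounts in each case to verifying that the relevant complement is connected.

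For a fixed finite component $\mathcal{B}_i$, I would fix a reference hexagon $k_0 \in \mathcal{K}$ and show that every hexagon of $\mathcal{B}_i^\complement$ is joined to $k_0$ by a pathway of hexagons lying entirely in $\mathcal{B}_i^\complement$. Since $\mathcal{B}_i^\complement = \mathcal{K} \sqcup \mathcal{B}_\infty \sqcup \bigsqcup_{j \neq i} \mathcal{B}_j$, a hexagon $u \in \mathcal{B}_i^\complement$ lies either in $\mathcal{K}$ or in one of the remaining components of $\mathcal{K}^\complement$, i.e.\ in some $\mathcal{B}_j$ with $j \neq i$ (where $j$ may be $\infty$ or a finite index). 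In the first case connectedness of $\mathcal{K}$ gives $u \equiv_{\mathcal{K}} k_0$; in the second case Corollary~\ref{cor:pathcor}, applied to the component $\mathcal{B}_j$ of $\mathcal{K}^\complement$, gives $u \equiv_{\mathcal{K} \cup \mathcal{B}_j} k_0$. In both cases the ambient set is contained in $\mathcal{B}_i^\complement$ (here it is essential that $j \neq i$, so that $\mathcal{B}_i$ itself never enters), and the monotonicity remark $\mathcal{K} \subseteq \mathcal{L}, \ a \equiv_{\mathcal{K}} b \Rightarrow a \equiv_{\mathcal{L}} b$ then upgrades the relation to $u \equiv_{\mathcal{B}_i^\complement} k_0$. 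Hence $\mathcal{B}_i^\complement$ is connected and $\mathcal{B}_i$ is a benzenoid.

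The argument for $\mathcal{B}_\infty^\complement$ runs along the same lines. First, $\mathcal{B}_\infty^\complement = \mathcal{K} \cup \mathcal{B}_1 \cup \cdots \cup \mathcal{B}_d$ is a finite union of finite sets, hence finite. Second, the same reference-hexagon argument—now with every finite $\mathcal{B}_j$, $1 \le j \le d$, playing the role of the ``other'' component and $\mathcal{K} \cup \mathcal{B}_j \subseteq \mathcal{B}_\infty^\complement$—shows via Corollary~\ref{cor:pathcor} that $\mathcal{B}_\infty^\complement$ is connected. Finally, $(\mathcal{B}_\infty^\complement)^\complement = \mathcal{B}_\infty$, which is connected because it is a single component of $\mathcal{K}^\complement$; so $\mathcal{B}_\infty^\complement$ is a finite connected hexagonal system with connected complement, i.e.\ a benzenoid.

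I do not anticipate a real obstacle, since the heavy lifting has been pushed into Lemma~\ref{lemma:decomposition} and especially Corollary~\ref{cor:pathcor}. The one point demanding care—and the closest thing to a pitfall—is the bookkeeping that in each complement one must exclude precisely the component being complemented, so that the sets $\mathcal{K} \cup \mathcal{B}_j$ furnished by the corollary genuinely sit inside the complement under consideration; once that is observed, the monotonicity of $\equiv$ closes the argument at once.
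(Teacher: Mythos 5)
Your proposal is correct and follows essentially the same route as the paper's own proof: both obtain the decomposition from Lemma~\ref{lemma:decomposition}, then establish connectedness of $\mathcal{B}_i^\complement$ and of $\mathcal{B}_\infty^\complement$ via Corollary~\ref{cor:pathcor} (the paper also cites Lemma~\ref{lemma:pathlemma}), and finish with the same observations that $\mathcal{B}_\infty^\complement$ is a finite union of finite sets and that $(\mathcal{B}_\infty^\complement)^\complement = \mathcal{B}_\infty$ is connected. Your fixed-reference-hexagon argument with the monotonicity of $\equiv$ simply spells out the detail the paper leaves implicit when it asserts that the union of the connected pieces is connected.
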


\begin{proof}
From Lemma~\ref{lemma:decomposition}, it immediately follows that
$\mathcal{K}^\complement = \mathcal{B}_\infty \sqcup \mathcal{B}_1 \sqcup \mathcal{B}_2 \sqcup \cdots \sqcup \mathcal{B}_d$,
where $\mathcal{B}_\infty$ is an infinite and $\mathcal{B}_i$, $1 \leq i \leq d$, are
finite connected components. We need to show that each $\mathcal{B}_i$, $1 \leq i \leq d$,
is a benzenoid.

It only remains to show that the complement $\mathcal{B}_i^\complement$ of $\mathcal{B}_i$,
$1 \leq i \leq d$, is connected. From
$\mathcal{H} = \mathcal{K} \sqcup \mathcal{K}^\complement = \mathcal{K} \sqcup
\left( \mathcal{B}_\infty \sqcup \mathcal{B}_1 \sqcup \mathcal{B}_2 \sqcup \cdots \sqcup \mathcal{B}_d \right)$
it follow that
$$
\mathcal{B}_i^\complement = \mathcal{K} \sqcup \mathcal{B}_\infty \sqcup \mathcal{B}_1 \sqcup \mathcal{B}_2 \sqcup
\cdots \sqcup \mathcal{B}_{i-1} \sqcup \mathcal{B}_{i+1} \sqcup \cdots \sqcup \mathcal{B}_{d}.
$$
Hexagonal systems $\mathcal{K}, \mathcal{B}_\infty, \mathcal{B}_1, \ldots, \mathcal{B}_{i-1}, \mathcal{B}_{i+1}, \ldots, \mathcal{B}_{d}$
are all connected. By Lemma~\ref{lemma:pathlemma} and Corollary~\ref{cor:pathcor}, their
union is also connected.

To show that $\mathcal{B}_\infty^\complement$ is a benzenoid, we need to show that
$\mathcal{B}_\infty^\complement$ and $(\mathcal{B}_\infty^\complement)^\complement$ are
connected and that $\mathcal{B}_\infty^\complement$ is finite. Since
$(\mathcal{B}_\infty^\complement)^\complement = \mathcal{B}_\infty$, it is clearly connected.
Since
$$
\mathcal{B}_\infty^\complement = \mathcal{K} \sqcup \mathcal{B}_1 \sqcup \mathcal{B}_2 \sqcup \cdots \sqcup \mathcal{B}_{d},
$$
it is connected by the same argument that worked for $\mathcal{B}_i^\complement$ above.
Hexagonal systems $\mathcal{K}, \mathcal{B}_1, \ldots, \mathcal{B}_{d}$ are all finite and
therefore their union $\mathcal{B}_\infty^\complement$ is also finite.
\end{proof}

\begin{definition}
Let $\mathcal{K}$ be a coronoid. The \emph{benzenoid closure} of $\mathcal{K}$, denoted
$\overline{\mathcal{K}}$, is the intersection of all those benzenoids that include $\mathcal{K}$
as a subset, i.e.,
\begin{equation}
\label{eq:benzoclosure}\tag{$\dagger$}
\overline{\mathcal{K}} = \bigcap\, \{ \mathcal{B} \mid \mathcal{B}\text{ is benzenoid} \land \mathcal{K} \subseteq \mathcal{B} \}.
\end{equation}
\end{definition}

\noindent For a coronoid $\mathcal{K}$ we define 
$\mathit{Benz}(\mathcal{K}) = \{ \mathcal{B} \mid \mathcal{B}\text{ is benzenoid} \land \mathcal{K} \subseteq \mathcal{B} \}$.
This set will be repeatedly used in several proofs that follow. Using this terminology,
(\ref{eq:benzoclosure}) from the above definition can be written in a shorter form as
$\overline{\mathcal{K}} = \bigcap \mathit{Benz}(\mathcal{K})$.

\begin{lemma}
\label{lemma:closure}
Benzenoid closure $\overline{\mathcal{K}}$ of a (general) coronoid $\mathcal{K}$ is
a benzenoid. Moreover,
$\overline{\mathcal{K}}=\mathcal{B}_\infty^\complement = \mathcal{K} \sqcup \mathcal{B}_1 \sqcup \cdots \sqcup \mathcal{B}_d$,
where
$\mathcal{K}^\complement = \mathcal{B}_\infty \sqcup \mathcal{B}_1 \sqcup \mathcal{B}_2 \sqcup \cdots \sqcup \mathcal{B}_d$
as in Theorem~\ref{thm:decompositionthm}.
\end{lemma}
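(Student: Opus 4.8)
The plan is to prove the displayed identity $\overline{\mathcal{K}} = \mathcal{B}_\infty^\complement = \mathcal{K} \sqcup \mathcal{B}_1 \sqcup \cdots \sqcup \mathcal{B}_d$ by establishing two inclusions, after which the benzenoid property of $\overline{\mathcal{K}}$ is immediate. Throughout I use the decomposition $\mathcal{K}^\complement = \mathcal{B}_\infty \sqcup \mathcal{B}_1 \sqcup \cdots \sqcup \mathcal{B}_d$ from Theorem~\ref{thm:decompositionthm}, where $\mathcal{B}_\infty$ is infinite and each $\mathcal{B}_i$ is a finite benzenoid, and I recall from that theorem that $\mathcal{B}_\infty^\complement$ is itself a benzenoid.

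For the easy inclusion $\overline{\mathcal{K}} \subseteq \mathcal{B}_\infty^\complement$, note that $\mathcal{B}_\infty^\complement = \mathcal{K} \sqcup \mathcal{B}_1 \sqcup \cdots \sqcup \mathcal{B}_d$ contains $\mathcal{K}$ and is a benzenoid, so it belongs to $\mathit{Benz}(\mathcal{K})$. Since $\overline{\mathcal{K}}$ is the intersection of all members of $\mathit{Benz}(\mathcal{K})$, it is contained in this particular member, giving $\overline{\mathcal{K}} \subseteq \mathcal{B}_\infty^\complement$. In particular $\mathit{Benz}(\mathcal{K}) \neq \emptyset$, so $\overline{\mathcal{K}}$ is well defined.

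The substantive step is the reverse inclusion $\mathcal{B}_\infty^\complement \subseteq \overline{\mathcal{K}}$, i.e.\ every benzenoid $\mathcal{B} \in \mathit{Benz}(\mathcal{K})$ satisfies $\mathcal{B}_\infty^\complement \subseteq \mathcal{B}$; since $\mathcal{K} \subseteq \mathcal{B}$ is given, this reduces to showing $\mathcal{B}_i \subseteq \mathcal{B}$ for each corona hole, $1 \leq i \leq d$. I would argue by contradiction. Fix $\mathcal{B} \in \mathit{Benz}(\mathcal{K})$ and suppose some hexagon $h$ lies in a corona hole $\mathcal{B}_i$ but $h \notin \mathcal{B}$, so $h \in \mathcal{B}^\complement$. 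Because $\mathcal{B}$ is a benzenoid, $\mathcal{B}^\complement$ is connected, and because $\mathcal{B}$ is finite, $\mathcal{B}^\complement$ is infinite. From $\mathcal{K} \subseteq \mathcal{B}$ we get $\mathcal{B}^\complement \subseteq \mathcal{K}^\complement$, so the connected system $\mathcal{B}^\complement$ sits inside $\mathcal{K}^\complement$. Applying the (first) lemma of this section, which says that a connected hexagonal subsystem of a hexagonal system lies in a single connected component of that system, $\mathcal{B}^\complement$ is contained entirely in one of $\mathcal{B}_\infty, \mathcal{B}_1, \ldots, \mathcal{B}_d$; since $\mathcal{B}^\complement$ is infinite while only $\mathcal{B}_\infty$ is infinite, necessarily $\mathcal{B}^\complement \subseteq \mathcal{B}_\infty$. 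But then $h \in \mathcal{B}^\complement \subseteq \mathcal{B}_\infty$, contradicting $h \in \mathcal{B}_i$ with $i \neq \infty$ and the disjointness of the components in~(\ref{eq:decomp}). Hence $\mathcal{B}_i \subseteq \mathcal{B}$ for every $i$, so $\mathcal{B}_\infty^\complement \subseteq \mathcal{B}$; intersecting over all $\mathcal{B} \in \mathit{Benz}(\mathcal{K})$ yields $\mathcal{B}_\infty^\complement \subseteq \overline{\mathcal{K}}$.

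Combining the two inclusions gives $\overline{\mathcal{K}} = \mathcal{B}_\infty^\complement = \mathcal{K} \sqcup \mathcal{B}_1 \sqcup \cdots \sqcup \mathcal{B}_d$, and since $\mathcal{B}_\infty^\complement$ is a benzenoid by Theorem~\ref{thm:decompositionthm}, so is $\overline{\mathcal{K}}$. The only delicate point is the reverse inclusion, and the crux there is to combine connectedness of $\mathcal{B}^\complement$ (from the benzenoid definition) with its infiniteness to pin it down inside the single infinite component $\mathcal{B}_\infty$ of $\mathcal{K}^\complement$. I expect this to be the main obstacle, and it is precisely where the hypothesis that a benzenoid has connected complement (i.e.\ has no holes) is essential: it forces any benzenoid engulfing $\mathcal{K}$ to engulf all of its corona holes as well.
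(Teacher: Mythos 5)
Your proposal is correct and follows essentially the same route as the paper: both inclusions are established the same way, with the key step being that the complement of any benzenoid $\mathcal{L} \in \mathit{Benz}(\mathcal{K})$ is connected and infinite, hence (by the first lemma of the section) lies inside the unique infinite component $\mathcal{B}_\infty$ of $\mathcal{K}^\complement$, forcing $\mathcal{B}_\infty^\complement \subseteq \mathcal{L}$. The only difference is stylistic: you phrase this step as a per-hexagon contradiction, while the paper states it directly as $\mathcal{L}^\complement \subseteq \mathcal{B}_\infty$ and complements.
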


\begin{proof}
By Theorem~\ref{thm:decompositionthm},
$\mathcal{K}^\complement = \mathcal{B}_\infty \sqcup \mathcal{B}_1 \sqcup \mathcal{B}_2 \sqcup \cdots \sqcup \mathcal{B}_d$,
where $\mathcal{B}_\infty$ is infinite and $\mathcal{B}_i$, $1 \leq i \leq d$, are
benzenoids. We will show that
$\overline{\mathcal{K}} = \mathcal{K} \cup \mathcal{B}_1 \cup \cdots \cup \mathcal{B}_{d} = \mathcal{B}_\infty^\complement$.

Let $\mathcal{L}$ be an arbitrary benzenoid such that $\mathcal{K} \subseteq \mathcal{L}$.
Then $\mathcal{L}^\complement \subseteq \mathcal{K}^\complement$. Because $\mathcal{L}^\complement$
is connected, it is contained in at most one of components
$\mathcal{B}_\infty, \mathcal{B}_1, \mathcal{B}_2, \ldots, \mathcal{B}_d$. Since
$\mathcal{L}^\complement$ is infinite, $\mathcal{L}^\complement \subseteq \mathcal{B}_\infty$.
Therefore,
$$
\mathcal{K} \cup \mathcal{B}_1 \cup \mathcal{B}_2 \cup \cdots \cup \mathcal{B}_d =
\mathcal{B}_\infty^\complement \subseteq (\mathcal{L}^\complement)^\complement = \mathcal{L}.
$$
This implies $\mathcal{K} \cup \mathcal{B}_1 \cup \mathcal{B}_2 \cup \cdots \cup \mathcal{B}_d \subseteq \overline{\mathcal{K}}$.

By Theorem~\ref{thm:decompositionthm},
$\mathcal{B}_\infty^\complement = \mathcal{K} \cup \mathcal{B}_1 \cup\cdots \cup \mathcal{B}_d$
is a benzenoid. Thus $\overline{\mathcal{K}} \subseteq \mathcal{K} \cup \mathcal{B}_1 \cup \cdots \cup \mathcal{B}_d$. 
We have proved that $\overline{\mathcal{K}} = \mathcal{K} \cup \mathcal{B}_1 \cup \cdots \cup \mathcal{B}_d$,
where $\mathcal{K} \cup \mathcal{B}_1 \cup \cdots \cup \mathcal{B}_d$ is a benzenoid.
This proves existence, and also uniqueness, of $\overline{\mathcal{K}}$.
\end{proof}

\noindent In the above proof of Lemma~\ref{lemma:closure} we have also shown how to construct
$\overline{\mathcal{K}}$ for a given $\mathcal{K}$. Reader should note that in general
case the intersection of two benzenoids is not necessarily a benzenoid
(see Figure~\ref{fig:notbenzenoid}).

\begin{proposition}
\label{prop:propertiesofcl}
The benzenoid closure $\mathcal{K} \mapsto \overline{\mathcal{K}}$ is an operation on
the set of all coronoids that satisfies the following three conditions:
\begin{enumerate}
\renewcommand{\labelenumi}{(\alph{enumi})}
\item $\forall \mathcal{K} \colon \mathcal{K} \subseteq \overline{\mathcal{K}}$,
\item $\forall \mathcal{K}, \mathcal{L} \colon \mathcal{K} \subseteq \mathcal{L} \implies \overline{\mathcal{K}} \subseteq \overline{\mathcal{L}}$, and
\item $\forall \mathcal{K} \colon \overline{\overline{\mathcal{K}}} = \overline{\mathcal{K}}$.
\end{enumerate} 
\end{proposition}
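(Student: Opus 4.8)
The plan is to verify each of the three conditions directly from the definition $\overline{\mathcal{K}} = \bigcap \mathit{Benz}(\mathcal{K})$, relying on Lemma~\ref{lemma:closure} to guarantee both that $\mathit{Benz}(\mathcal{K})$ is nonempty (so the intersection is not the trivial one) and that $\overline{\mathcal{K}}$ is again a benzenoid, hence a coronoid; this is what makes $\mathcal{K} \mapsto \overline{\mathcal{K}}$ a well-defined operation on the set of all coronoids. For condition (a), I would observe that every member of $\mathit{Benz}(\mathcal{K})$ contains $\mathcal{K}$ by definition, so $\mathcal{K}$ is contained in their intersection; thus $\mathcal{K} \subseteq \overline{\mathcal{K}}$, which is extensivity.

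For condition (b), the key observation is that the assignment $\mathcal{K} \mapsto \mathit{Benz}(\mathcal{K})$ reverses inclusions: if $\mathcal{K} \subseteq \mathcal{L}$, then any benzenoid containing $\mathcal{L}$ also contains $\mathcal{K}$, so $\mathit{Benz}(\mathcal{L}) \subseteq \mathit{Benz}(\mathcal{K})$. Intersecting over a smaller family yields a larger set, hence
$$
\overline{\mathcal{K}} = \bigcap \mathit{Benz}(\mathcal{K}) \subseteq \bigcap \mathit{Benz}(\mathcal{L}) = \overline{\mathcal{L}},
$$
which gives monotonicity.

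For condition (c), one inclusion $\overline{\mathcal{K}} \subseteq \overline{\overline{\mathcal{K}}}$ is simply condition (a) applied to $\overline{\mathcal{K}}$. For the reverse inclusion I would invoke Lemma~\ref{lemma:closure}: since $\overline{\mathcal{K}}$ is itself a benzenoid and trivially satisfies $\overline{\mathcal{K}} \subseteq \overline{\mathcal{K}}$, it belongs to $\mathit{Benz}(\overline{\mathcal{K}})$. Therefore the intersection $\overline{\overline{\mathcal{K}}} = \bigcap \mathit{Benz}(\overline{\mathcal{K}})$ is contained in this particular member, giving $\overline{\overline{\mathcal{K}}} \subseteq \overline{\mathcal{K}}$. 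The two inclusions yield idempotency.

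Every step here is a routine set-theoretic manipulation, so I expect no genuine obstacle. The single substantive input is Lemma~\ref{lemma:closure}, which supplies the fact that $\overline{\mathcal{K}}$ is a benzenoid; this is precisely what powers condition (c), since without knowing that the closure of a coronoid is itself a benzenoid one could not place $\overline{\mathcal{K}}$ inside $\mathit{Benz}(\overline{\mathcal{K}})$, and idempotency would break down. In effect, the proposition records that $\bigcap \mathit{Benz}(\,\cdot\,)$ is the closure operator associated with the family of benzenoids, and all three Kuratowski-type axioms follow formally from that description.
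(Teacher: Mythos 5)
Your proof is correct and follows essentially the same route as the paper's: condition (a) directly from the definition of the intersection, condition (b) from the inclusion-reversal $\mathit{Benz}(\mathcal{L}) \subseteq \mathit{Benz}(\mathcal{K})$, and condition (c) from extensivity plus the membership $\overline{\mathcal{K}} \in \mathit{Benz}(\overline{\mathcal{K}})$ supplied by Lemma~\ref{lemma:closure}. Your framing of the single substantive input — that Lemma~\ref{lemma:closure} is what makes idempotency (and well-definedness of the iterated closure) work — matches the paper's implicit use of that same fact.
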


\noindent Note that the co-domain of the mapping $\mathcal{K} \mapsto \overline{\mathcal{K}}$ can be
restricted to the set of all benzenoids. This mapping is surjective and the preimage of every
benzenoid is a finite set of coronoids.

\begin{proof}
By definition, $\overline{\mathcal{K}} = \bigcap \mathit{Benz}(\mathcal{K})$.
From the definition it follows directly that $\mathcal{K} \subseteq \overline{\mathcal{K}}$.

Let us now show that $\overline{\overline{\mathcal{K}}} = \overline{\mathcal{K}}$.
By definition, $\overline{\overline{\mathcal{K}}} = \bigcap \mathit{Benz}(\overline{\mathcal{K}})$.
Therefore $\overline{\mathcal{K}} \subseteq \overline{\overline{\mathcal{K}}}$. Since
$\overline{\mathcal{K}}$ is a benzenoid, $\overline{\mathcal{K}} \in \mathit{Benz}(\overline{\mathcal{K}})$.
Therefore, $\overline{\overline{\mathcal{K}}} \subseteq \overline{\mathcal{K}}$.

Finally, we will show that
$\mathcal{K} \subseteq \mathcal{L} \implies \overline{\mathcal{K}} \subseteq \overline{\mathcal{L}}$.
By definition, $\overline{\mathcal{K}} = \bigcap \mathit{Benz}(\mathcal{K})$ and
$\overline{\mathcal{L}} = \bigcap \mathit{Benz}(\mathcal{L})$.
From $\mathcal{K} \subseteq \mathcal{L}$ it follows that every element of
$\mathit{Benz}(\mathcal{L})$ is also an element of $\mathit{Benz}(\mathcal{K})$, i.e.,
$\mathit{Benz}(\mathcal{L}) \subseteq \mathit{Benz}(\mathcal{K})$. Therefore
$\overline{\mathcal{K}} \subseteq \overline{\mathcal{L}}$.
\end{proof}

\noindent We define another closure operation.

\begin{definition}
\label{def:alternativeclosure}
An \emph{alternative benzenoid closure} $\mathcal{K} \mapsto \widetilde{\mathcal{K}}$
is a mapping on the set of all coronoids that satisfies the following three conditions:
\begin{enumerate}
\renewcommand{\labelenumi}{(\roman{enumi})}
\item $\forall \mathcal{K} \colon \widetilde{\mathcal{K}}$ is benzenoid,
\item $\forall \mathcal{K} \colon \mathcal{K} \subseteq \widetilde{\mathcal{K}}$, and
\item $\forall \mathcal{K} \colon \forall \text{benzenoid }\mathcal{L} \colon \mathcal{K} \subseteq
\mathcal{L} \implies \widetilde{\mathcal{K}} \subseteq \mathcal{L}$.
\end{enumerate}
\end{definition}

\noindent The following lemma tells us that this corresponds to an alternative definition of
the benzenoid closure operation.

\begin{lemma}
\label{lem:alternativeclop}
Let $\mathcal{K}$ be a general coronoid. Then $\widetilde{\mathcal{K}} = \overline{\mathcal{K}}$.
With other words, benzenoid closure operation and the alternative benzenoid closure
operation coincide.
\end{lemma}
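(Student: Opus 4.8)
The plan is to establish the two set inclusions $\widetilde{\mathcal{K}} \subseteq \overline{\mathcal{K}}$ and $\overline{\mathcal{K}} \subseteq \widetilde{\mathcal{K}}$ separately; in each direction I would simply feed the known properties of one operation into the defining condition of the other. This is the standard way of reconciling a ``closure as intersection of all containing closed sets'' with a ``closure as least containing closed set''.

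For the inclusion $\widetilde{\mathcal{K}} \subseteq \overline{\mathcal{K}}$, I would first recall from Lemma~\ref{lemma:closure} that $\overline{\mathcal{K}}$ is a benzenoid, and from Proposition~\ref{prop:propertiesofcl}(a) (equivalently, directly from the defining intersection) that $\mathcal{K} \subseteq \overline{\mathcal{K}}$. Hence $\overline{\mathcal{K}}$ is a benzenoid containing $\mathcal{K}$, so it is an admissible choice for the set $\mathcal{L}$ in condition (iii) of Definition~\ref{def:alternativeclosure}. Applying (iii) with $\mathcal{L} = \overline{\mathcal{K}}$ gives $\widetilde{\mathcal{K}} \subseteq \overline{\mathcal{K}}$ at once.

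For the reverse inclusion $\overline{\mathcal{K}} \subseteq \widetilde{\mathcal{K}}$, I would use conditions (i) and (ii): together they say that $\widetilde{\mathcal{K}}$ is itself a benzenoid containing $\mathcal{K}$, i.e.\ $\widetilde{\mathcal{K}} \in \mathit{Benz}(\mathcal{K})$. Since $\overline{\mathcal{K}} = \bigcap \mathit{Benz}(\mathcal{K})$ is contained in every member of the family over which the intersection is taken, it is in particular contained in $\widetilde{\mathcal{K}}$. Combining the two inclusions yields $\widetilde{\mathcal{K}} = \overline{\mathcal{K}}$. As a by-product this shows that an alternative benzenoid closure is unique, and that one exists, since $\overline{\mathcal{K}}$ itself satisfies (i)--(iii).

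The argument is essentially immediate once the right facts are marshalled, and there is no genuinely hard step. The single point that must be in place --- and which is precisely what Lemma~\ref{lemma:closure} supplies --- is that $\overline{\mathcal{K}}$ is a benzenoid; without that we could not invoke condition (iii) with $\mathcal{L} = \overline{\mathcal{K}}$, and the first inclusion would not go through.
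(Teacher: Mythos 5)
Your proposal is correct and follows essentially the same route as the paper: both proofs establish the two inclusions by feeding the defining properties of each closure into the conditions of the other, using Lemma~\ref{lemma:closure} and Proposition~\ref{prop:propertiesofcl} to know that $\overline{\mathcal{K}}$ is a benzenoid containing $\mathcal{K}$, and the fact that $\bigcap \mathit{Benz}(\mathcal{K})$ is contained in every member of $\mathit{Benz}(\mathcal{K})$. The only cosmetic difference is that for $\widetilde{\mathcal{K}} \subseteq \overline{\mathcal{K}}$ you apply condition (iii) once, to $\mathcal{L} = \overline{\mathcal{K}}$, whereas the paper applies it to every $\mathcal{L} \in \mathit{Benz}(\mathcal{K})$ and then intersects; both are valid and rest on the same facts.
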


\begin{proof}
First, we will prove existence of $\widetilde{\mathcal{K}}$ by proving that
$\overline{\mathcal{K}}$ satisfies the three conditions of
Definition~\ref{def:alternativeclosure}.

By Lemma \ref{lemma:closure}, $\overline{\mathcal{K}}$ is a benzenoid, so (i) holds.
Proposition~\ref{prop:propertiesofcl} tells us that $\mathcal{K} \subseteq \overline{\mathcal{K}}$,
so (ii) also holds. It only remains to see that
$\mathcal{K} \subseteq \mathcal{L} \implies \overline{\mathcal{K}} \subseteq \mathcal{L}$
for every benzenoid $\mathcal{L}$.

Let $\mathcal{L}$ be any benzenoid such that $\mathcal{K} \subseteq \mathcal{L}$. By
definition, $\mathcal{L} \in \mathit{Benz}(\mathcal{K})$. It is clear that
$\mathcal{L} \in \mathit{Benz}(\mathcal{K})$ implies
$\overline{\mathcal{K}} = \bigcap \mathit{Benz}(\mathcal{K}) \subseteq \mathcal{L}$.

In principle, there could exist some other benzenoid, different from $\overline{\mathcal{K}}$,
which would also satisfy the three conditions in Definition~\ref{def:alternativeclosure}.
We will show that this is not the case by proving that every $\widetilde{\mathcal{K}}$ from
Definition~\ref{def:alternativeclosure} is $\overline{\mathcal{K}}$. 

Let $\mathcal{L}$ be any element of $\mathit{Benz}(\mathcal{K})$. Condition (iii) implies
that $\widetilde{\mathcal{K}} \subseteq \mathcal{L}$. Therefore
$\widetilde{\mathcal{K}} \subseteq \bigcap \mathit{Benz}(\mathcal{K}) = \overline{\mathcal{K}}$.
By condition (i) and (ii), $\widetilde{\mathcal{K}}$ is a benzenoid such that
$\mathcal{K} \subseteq \widetilde{\mathcal{K}}$. This means that $\widetilde{\mathcal{K}}$
is a member of $\mathit{Benz}(\mathcal{K})$. Clearly,
$\overline{\mathcal{K}} = \bigcap \mathit{Benz}(\mathcal{K}) \subseteq \widetilde{\mathcal{K}}$.
From $\widetilde{\mathcal{K}} \subseteq \overline{\mathcal{K}}$ and
$\overline{\mathcal{K}} \subseteq \widetilde{\mathcal{K}}$ it follows that
$\widetilde{\mathcal{K}} = \overline{\mathcal{K}}$ which completes the proof.
\end{proof}

\begin{figure}[!h]
\centering
\input{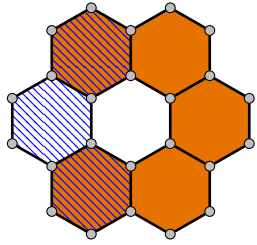}
\caption{The intersection of two benzenoids is not necessarily a benzenoid.}
\label{fig:notbenzenoid}
\end{figure}

\begin{lemma}
\label{lemma:intersectionofbenzenoids}
The intersection of two benzenoids is the disjoint union of finitely many benzenoids.
\end{lemma}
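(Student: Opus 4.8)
The plan is to fix two benzenoids $\mathcal{A}$ and $\mathcal{B}$ and study $\mathcal{K} \coloneqq \mathcal{A} \cap \mathcal{B}$ directly. Since $\mathcal{A}$ is finite, $\mathcal{K}$ is finite, so it decomposes into finitely many connected components $\mathcal{C}_1, \ldots, \mathcal{C}_k$, each a coronoid; it then suffices to prove that every $\mathcal{C}_i$ is actually a benzenoid, i.e.\ has connected complement. The first step I would take is the De Morgan identity
\[
\mathcal{K}^\complement = (\mathcal{A} \cap \mathcal{B})^\complement = \mathcal{A}^\complement \cup \mathcal{B}^\complement .
\]
By the definition of a benzenoid, $\mathcal{A}^\complement$ and $\mathcal{B}^\complement$ are connected, and both are infinite because $\mathcal{A}$ and $\mathcal{B}$ are finite; any hexagon lying far enough from the origin misses both $\mathcal{A}$ and $\mathcal{B}$, so $\mathcal{A}^\complement \cap \mathcal{B}^\complement \neq \emptyset$. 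I would then use the elementary gluing fact---which follows from the monotonicity property $\mathcal{K} \subseteq \mathcal{L} \land a \equiv_{\mathcal{K}} b \Rightarrow a \equiv_{\mathcal{L}} b$ recorded earlier---that the union of two connected hexagonal systems which share a hexagon, or contain a pair of mutually adjacent hexagons, is again connected. Applied to the displayed identity this shows $\mathcal{K}^\complement$ is connected; in particular $\mathcal{K}$ has no corona holes.

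Next I would show each component $\mathcal{C}_i$ is a benzenoid. Writing
\[
\mathcal{C}_i^\complement = \mathcal{K}^\complement \,\sqcup \bigsqcup_{j \neq i} \mathcal{C}_j ,
\]
I already have that $\mathcal{K}^\complement$ is connected. For each $j \neq i$ the set $\mathcal{C}_j$ is connected and, being finite and non-empty, possesses a hexagon with a neighbour outside $\mathcal{C}_j$; by Lemma~\ref{lemma:neighbours} applied to the system $\mathcal{K}$, any such neighbour lies in $\mathcal{K}^\complement$ (a neighbour lying in $\mathcal{K}$ would belong to the same component $\mathcal{C}_j$). Hence each $\mathcal{C}_j$ is adjacent to the connected set $\mathcal{K}^\complement$, and adjoining them one at a time, reusing the gluing fact, shows $\mathcal{C}_i^\complement$ is connected. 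Thus $\mathcal{C}_i$ is finite, connected, and has connected complement, i.e.\ it is a benzenoid, and $\mathcal{K} = \mathcal{C}_1 \sqcup \cdots \sqcup \mathcal{C}_k$ is the required decomposition. (If $\mathcal{K} = \emptyset$ the claim holds vacuously as the empty union.)

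The hard part is not a deep idea but the connectivity bookkeeping: the whole argument rests on the gluing fact, so the one genuine obligation is to prove, from the definition of $\equiv_{\mathcal{K}}$ and monotonicity, that a union of connected pieces that pairwise touch (sharing a hexagon or being adjacent) is connected. The subtle point is the appeal to Lemma~\ref{lemma:neighbours}, which guarantees that every boundary neighbour of a component $\mathcal{C}_j$ falls into $\mathcal{K}^\complement$ rather than into some other component; this is precisely what rules out the ``false holes'' illustrated in Figure~\ref{fig:notbenzenoid} and lets the disconnected intersection split cleanly into hole-free benzenoids.
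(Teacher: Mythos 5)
Your proof is correct, but it takes a genuinely different route from the paper's. The paper argues per component and by contradiction: for a component $\mathcal{K}_i$ of the intersection it invokes Lemma~\ref{lemma:decomposition} to write $\mathcal{K}_i^\complement = \mathcal{C}_\infty \sqcup \mathcal{C}_1 \sqcup \cdots \sqcup \mathcal{C}_m$, notes that $\mathcal{L}_a^\complement$, being connected and infinite, must sit inside $\mathcal{C}_\infty$, and then uses Lemma~\ref{lemma:adjacencylemma} to pick a hexagon $h \in \mathcal{C}_1$ adjacent to $\mathcal{K}_i$; since $h$ is not in the intersection (Lemma~\ref{lemma:neighbours} rules out $h$ lying in another component), $h$ lies in $\mathcal{L}_a^\complement$ or $\mathcal{L}_b^\complement$, hence in $\mathcal{C}_\infty$, contradicting $h \in \mathcal{C}_1$. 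You instead prove directly that the complement of the whole intersection is connected via the De~Morgan identity $\mathcal{K}^\complement = \mathcal{A}^\complement \cup \mathcal{B}^\complement$ (two connected sets sharing a hexagon because $\mathcal{A} \cup \mathcal{B}$ is finite), and then obtain connectivity of each $\mathcal{C}_i^\complement$ by gluing the remaining components onto $\mathcal{K}^\complement$, using Lemma~\ref{lemma:neighbours} to see that each such component is adjacent to $\mathcal{K}^\complement$. Your approach buys a direct (non-contradiction) argument and a reusable intermediate fact---the complement of an intersection of benzenoids is connected---at the cost of having to state and prove the gluing fact, which is elementary and is exactly the technique the paper itself deploys in Lemma~\ref{lemma:pathlemma} and Corollary~\ref{cor:pathcor}; the paper's proof is shorter because it leans on its already-established decomposition machinery. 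Two small points of care: your closing remark about pieces that ``pairwise touch'' overstates what you use (the components $\mathcal{C}_j$ never touch one another; they each touch the common connected set $\mathcal{K}^\complement$, which is why sequential gluing works), and the existence of a hexagon of $\mathcal{C}_j$ with a neighbour outside $\mathcal{C}_j$ deserves one line (e.g.\ apply Lemma~\ref{lemma:adjacencylemma} to the system $\mathcal{C}_j^\complement$, whose complement is the connected set $\mathcal{C}_j$). Neither is a gap in substance.
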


\begin{proof}
Let $\mathcal{L}_a$ and $\mathcal{L}_b$ be two benzenoids and let
$\mathcal{L} = \mathcal{L}_a \cap \mathcal{L}_b$. Because $\mathcal{L}_a$ and $\mathcal{L}_b$
are finite, $\mathcal{L}$ is also finite. It consists of finitely many (possibly 0) finite
connected components. That allows us to write
$\mathcal{L} = \mathcal{K}_1 \sqcup \mathcal{K}_2 \sqcup \cdots \sqcup \mathcal{K}_d$.
To complete the proof, we have to show that each $\mathcal{K}_i^\complement$ is connected. 

By Lemma~\ref{lemma:decomposition},
$\mathcal{K}_i^\complement = \mathcal{C}_\infty \sqcup \mathcal{C}_1 \sqcup \mathcal{C}_2 \sqcup \cdots \sqcup \mathcal{C}_m$.
From $\mathcal{K}_i \subseteq \mathcal{L} \subseteq \mathcal{L}_a$ we obtain
$\mathcal{L}_a^\complement \subseteq \mathcal{K}_i^\complement$. Since
$\mathcal{L}_a^\complement$ is connected and infinite,
$\mathcal{L}_a^\complement \subseteq \mathcal{C}_\infty$.
In addition to $\mathcal{C}_\infty$, $\mathcal{K}_i^\complement$ may have 0 or more
other connected components. Suppose $m \geq 1$.

By Lemma~\ref{lemma:adjacencylemma}, there exists $h \in \mathcal{C}_1$ which is adjacent
to some $k \in \mathcal{K}_i$. Then $h \notin \mathcal{L}_a$ or $h \notin \mathcal{L}_b$.
If $h$ belonged to both $ \mathcal{L}_a$ and $\mathcal{L}_b$, then $h \in \mathcal{K}_j$ for
some $j$. From $h \in \mathcal{C}_1 \subseteq \mathcal{K}_i^\complement$, it follows
that $j \neq i$. But this contradicts Lemma~\ref{lemma:neighbours}, so $h$ indeed belongs
to at most one of $ \mathcal{L}_a$ or $\mathcal{L}_b$. Without loss of generality, we can
assume $h \notin \mathcal{L}_a$. In other words,
$h \in \mathcal{L}_a^\complement \subseteq \mathcal{C}_\infty$. This contradicts the fact
that $h \in \mathcal{C}_1$.

This means that $m = 0$, implying $\mathcal{K}_i^\complement = \mathcal{C}_\infty$, and the
proof is complete.
\end{proof}

\begin{figure}[!htb]
\centering
\input{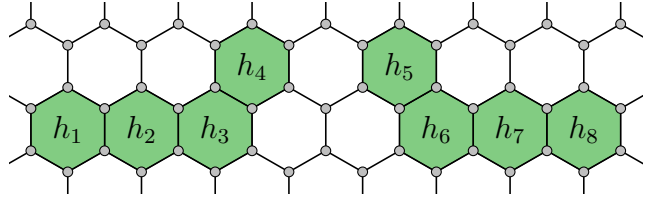}
\caption{Benzenoids $\mathcal{K}_1 = \{h_1, h_2, h_3, h_4 \}$ and
$\mathcal{K}_2 = \{h_5, h_6, h_7, h_8 \}$ are equivalent.}
\label{fig:eqivalentCoronoidz}
\end{figure}

\noindent Look at Figure~\ref{fig:eqivalentCoronoidz}. Benzenoids $\mathcal{K}_1$ and $\mathcal{K}_2$ are
not equal, i.e., $\mathcal{K}_1 \neq \mathcal{K}_2$. For instance, $h_1 \in \mathcal{K}_1$ but
$h_1 \notin \mathcal{K}_2$. If one would draw them on a piece of paper and cut them out, they
would coincide. This notion of coincidence can be precisely defined. Let $\Aut(\mathcal{H})$ be
the group of symmetries of the hexagonal grid, i.e., isometries of the plane that map hexagons
to hexagons. (For more details on this topic see \cite{grunbaum2013tilings}.) If the coordinate
system is placed so that the origin is in the centre of a chosen hexagon and if hexagons have
sides of unit length then isometries $\phi_1, \phi_2, \phi_3, \phi_4 \colon \mathbb{R}^2 \to \mathbb{R}^2$,
where
\begin{align*}
\phi_1(x, y) & = (x + \sqrt{3}, y),\\
\phi_2(x, y) & = (x + {\textstyle \frac{\sqrt{3}}{2}}, y + {\textstyle \frac{3}{2}}), \\
\phi_3(x, y) &= (-x, y), \text{ and }\\
\phi_4(x, y) &= ({\textstyle x \cdot \cos \frac{\pi}{3} - y \cdot \sin \frac{\pi}{3}, x \cdot \sin \frac{\pi}{3} + y \cdot \cos \frac{\pi}{3}})
\end{align*}
generate the group $\Aut(\mathcal{H})$. Now, we can define:

\begin{definition}
Hexagonal systems $\mathcal{K}$ and $\mathcal{L}$ are \emph{equivalent}, denoted
$\mathcal{K} \cong \mathcal{L}$, if there exists an isometry $\psi \in \Aut(\mathcal{H})$, 
such that $\psi(\mathcal{K}) = \mathcal{L}$.
\end{definition}

\noindent Let $\mathcal{K}$ be a (general) coronoid. From Lemma~\ref{lemma:closure} it follows that
$$
\overline{\mathcal{K}} \setminus \mathcal{K} = \mathcal{B}_1 \sqcup \cdots \sqcup \mathcal{B}_d \quad (d \geq 0),
$$
where each $\mathcal{B}_i$ is a benzenoid. The benzenoids $\mathcal{B}_i$ are called
\emph{corona holes}. They consist of one or more hexagons. The (general) coronoids (a) and
(b) in Figure \ref{fig:coroexamples} have two corona holes each, whilst coronoid (c) has
only one. Examples (b) and (c) are special in a sense. A general coronoid is called
\emph{degenerate} if one of its corona holes is a single hexagon. This is because such a
corona hole has no interpretation in chemistry. Otherwise it is called \emph{non-degenerate}.
Corona holes of size 1 will be called \emph{degenerate corona holes}.

\begin{definition}
Let $\mathcal{K}$ be any coronoid. Non-degenerate closure of $\mathcal{K}$, denoted
$\mathit{NonDeg}(\mathcal{K})$, is the smallest non-degenerate coronoid which includes
$\mathcal{K}$.
\end{definition}

\noindent It is not hard to see that $\mathit{NonDeg}(\mathcal{K})$ can be obtained from $\mathcal{K}$
by adding to it exactly its degenerate corona holes. Later in this paper, where we consider
applications of this theory in chemistry, by the word \emph{coronoid} we will always mean
a non-degenerate coronoid.

The definition of coronoids includes benzenoids as a special case. A \emph{proper coronoid}
is one that is not also a benzenoid. Using the terminology of topology one may say that the
only difference between benzenoids and proper coronoids is that both are connected, but the
former are also simply connected. To give a precise meaning to this, one has to consider
benzenoids as surfaces with a boundary. Plane graphs on their own are not sufficient model for
coronoids. Some faces have to be labelled as ``not present''. We will call them \emph{holes}.
In this context, the outer face of a coronoid is merely one of its holes. Chemists do not always
distinguish between the two models (coronoids as plane graphs and coronoids as surfaces with
boundary); for them, exactly those faces of length strictly greater than 6 are holes. In other
words, they do not recognise degenerate corona holes. As we will see later, when we generalise
coronoids to perforated patches, some care should be taken over this distinction.

We defined coronoids (and benzenoids) as subsets of hexagons in the Euclidean plane. We may
consider them as 2-dimensional \emph{cell complexes} \cite{hatcher2002algebraic}. In what follows,
we will adopt some topological terminology and notation. The infinite hexagonal grid $\mathcal{H}$
can be obtained by embedding the infinite cubic graph called \emph{hexagonal lattice}
\cite{hammack2011handbook} in the Euclidean plane. Vertices of the hexagonal lattice are 0-cells,
edges are 1-cells and faces are 2-cells. Every edge of the hexagonal lattice is incident to exactly
two distinct hexagonal faces. Any two distinct faces can either share a single edge or nothing at all.

Let $\mathcal{K}$ be a coronoid. By our definition it is a collection of 2-cells. We can assign
to $\mathcal{K}$ the smallest \emph{subcomplex} of the hexagonal grid which includes all hexagons
(2-cells) of $\mathcal{K}$. Note that a subcomplex contains the closure of each of its cells. Its
1-skeleton is a graph that is a subgraph of the hexagonal lattice consisting of those vertices and
edges which are incident with at least one hexagon of $\mathcal{K}$. We will denote this graph by
$G(\mathcal{K})$ and call it a \emph{skeleton} in the general setting and a \emph{coronoid graph}
when we deal with coronoids. Note that if $\mathcal{H}$ denotes the hexagonal grid, then
$G(\mathcal{H})$ is the hexagonal lattice. Note that $\Aut(G(\mathcal{H}))$ acts transitively on
vertices of $G(\mathcal{H})$. Elements of $\Aut(G(\mathcal{H}))$ are graph automorphisms.

\begin{figure}[!hb]
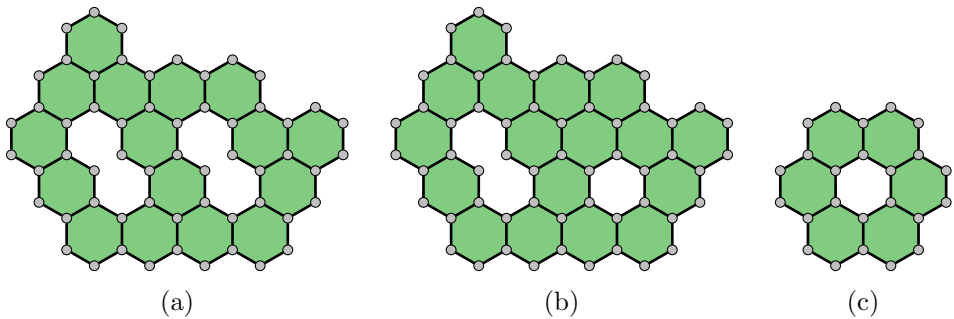

\centering
\subfigure[]{
\input{img/coronoid01.tikz}
}
\subfigure[]{
\input{img/coronoid01a.tikz}
}
\subfigure[]{
\input{img/coronoid02.tikz}
}
\caption{Three examples of general coronoids. The first is non-degenerate whilst the second
and third are degenerate.}
\label{fig:coroexamples}
\end{figure}
Consider the three examples of coronoids on Figure \ref{fig:coroexamples}.
As in the case of benzenoids, every edge is incident to exactly two distinct faces of which
one may be a hole, but not both. Every vertex is incident to either 2 or 3 edges. A coronoid
graph is a 2-connected and 2-edge-connected graph.

The edges of a coronoid graph are naturally divided into two types: internal and boundary. An edge
belonging to two (adjacent) hexagons is \emph{internal} and an edge belonging to only one hexagon
is a \emph{boundary edge}. Vertices of a coronoid graph can also be divided into internal and boundary.
\emph{Internal} vertices are incident with 3 internal edges. All other vertices of $G(\mathcal{K})$ are
called \emph{boundary}. Boundary vertices can be further divided into two types: those of degree 2 and
those of degree 3. A hexagon $h$ of a coronoid $\mathcal{K}$ is \emph{internal} if $|N_\mathcal{K}(h)| = 6$.
Otherwise it is a \emph{boundary} hexagon. In other words, an internal hexagon is surrounded by 6
internal edges. The subgraph of $G(\mathcal{K})$ that consists of all boundary vertices and edges will
be called \emph{border} of $\mathcal{K}$ and denoted $\partial \mathcal{K}$. When $\mathcal{K} = \{k\}$
is a singleton, we will write $\partial k$. A benzenoid is called \emph{catacondensed} if it has no
internal vertices, and \emph{pericondensed} otherwise. Pericondensed benzenoids can be further
divided into two groups: those with internal hexagons which we call \emph{corpulent} benzenoids and
those without internal hexagons which we call \emph{gaunt} benzenoids.

\begin{proposition}
\label{prop:girthoflattice6}
The girth of the infinite hexagonal lattice is 6, i.e., $$\girth(G(\mathcal{H})) = 6.$$
Let $C$ be an arbitrary cycle of $G(\mathcal{H})$. Then $|C| = 6$ if and only if there
exists a hexagon $h \in \mathcal{H}$ such that $C = \partial h$.
\end{proposition}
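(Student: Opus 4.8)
The plan is to treat $C$ as a Jordan curve in the plane and exploit the fact that every bounded face of $G(\mathcal{H})$ is a hexagon, thereby converting the girth question into an Euler-characteristic count over the finite disc that $C$ bounds.

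First I would invoke the Jordan curve theorem: a cycle $C$ of length $\ell$ in the plane graph $G(\mathcal{H})$ bounds a closed topological disc $\overline{D}$. Let $H$ be the finite subgraph of $G(\mathcal{H})$ lying in $\overline{D}$, with $v$ vertices, $e$ edges and $f$ bounded faces. Since $C$ is a genuine cycle, $f \geq 1$, and each bounded face of $H$ is a face of $\mathcal{H}$, hence a hexagon. I would note that $H$ is connected, because $\overline{D}$ is a disc tiled by the closed hexagons it contains.

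Next, a double count of edge--face incidences gives $6f = 2e - \ell$ (each interior edge of $H$ lies on two of the $f$ hexagons, while each of the $\ell$ boundary edges lies on exactly one). In particular $\ell = 2(e-3f)$ is even and $e = 3f + \ell/2$. Euler's formula for the connected plane graph $H$ (with its $f$ bounded faces and one outer face) gives $v = 1 + e - f = 1 + 2f + \ell/2$. I then count degrees in $H$: every vertex of $H$ not on $C$ lies in the open disc, so all three of its lattice-edges lie in $H$ and it has degree $3$; there are $v - \ell$ such vertices. Every vertex on $C$ has degree at least $2$, namely its two $C$-edges. Summing degrees, $2e = 3(v-\ell) + S$, where $S$ is the total degree contributed by the $\ell$ vertices of $C$; substituting the expressions for $e$ and $v$ yields $S = \tfrac{5}{2}\ell - 3$. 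Since $S \geq 2\ell$, we obtain $\ell \geq 6$. Together with the observation that $\partial h$ is a $6$-cycle for any hexagon $h$, this proves $\girth(G(\mathcal{H})) = 6$. For the characterisation I would run the equality case: when $\ell = 6$ the inequality $S \geq 2\ell$ is tight, so every vertex of $C$ has degree exactly $2$ in $H$; hence no edge of $H$ joins a $C$-vertex to the interior, and no chord joins two $C$-vertices. If $H$ had any interior vertex, the interior vertices would form a component of $H$ disjoint from $C$, contradicting the connectedness of $H$; therefore $H = C$, the disc $\overline{D}$ contains a single hexagonal face $h$, and $C = \partial h$. The converse is immediate, as $\partial h$ is always a $6$-cycle.

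I expect the main obstacle to be the topological bookkeeping rather than the arithmetic: rigorously justifying, via the Jordan curve theorem and the hexagonal face structure of $\mathcal{H}$, that $H$ is a connected plane graph whose bounded faces are exactly hexagons, that every non-$C$ vertex of $H$ has all three incident edges inside $\overline{D}$, and that the incidence identity $6f = 2e - \ell$ holds (in particular that each boundary edge meets exactly one interior hexagon). Once these facts are secured, the Euler and degree-sum computations, and the equality analysis, are routine.
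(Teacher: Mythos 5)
Your proposal is correct, but it takes a genuinely different route from the paper. The paper proves this proposition by a local, symmetry-based case analysis: using vertex-transitivity of $G(\mathcal{H})$ it enumerates, up to symmetry, the single path of length 2, the two paths of length 3, the four paths of length 4 and the relevant paths of length 5, and checks by inspection (with figures) that no endpoints are adjacent except in the one configuration that closes up around a hexagon; this rules out cycles of length 3, 4, 5 and characterises the 6-cycles. You instead argue globally: the Jordan curve theorem gives a disc $\overline{D}$ bounded by $C$, whose induced subgraph $H$ has only hexagonal bounded faces, and then the incidence identity $6f = 2e - \ell$, Euler's formula $v = 1 + 2f + \ell/2$, and the degree sum $2e = 3(v-\ell) + S$ combine to give $S = \tfrac{5}{2}\ell - 3 \geq 2\ell$, hence $\ell \geq 6$, with the equality case forcing $H = C$ and $f = 1$, i.e.\ $C = \partial h$. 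Your arithmetic checks out (I verified the three identities and the equality analysis), and the approach buys several things the paper's does not: it needs no case enumeration or appeal to pictures, it shows in passing that every cycle of the lattice has even length, and the characterisation of 6-cycles falls out of the tightness analysis rather than requiring separate treatment. The cost is exactly the topological bookkeeping you flag -- connectedness of $H$, that the bounded faces of $H$ are precisely the hexagons inside $\overline{D}$, that interior vertices have all three edges in $H$, and that each edge of $C$ bounds exactly one interior hexagon; these are standard consequences of the Jordan curve theorem (which the paper itself invokes in its proof of Proposition 18) but must be stated carefully. The paper's proof, by contrast, is elementary and self-contained, using nothing beyond the local structure of the lattice, at the price of being tied to a finite case check.
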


\begin{proof}
Let $h \in \mathcal{H}$ be any hexagon. Then $\partial h \cong C_6$. We will show that there are no
shorter cycles and also that all 6-cycles are the borders of hexagonal faces of the hexagonal grid.

The distance between vertices $u$ and $v$ of a graph $G$ is the length of the shortest path
between $u$ and $v$ in $G$. We will denote it by $d_G(u, v)$.

Owing to symmetry, to investigate the structural properties of cycles of $G(\mathcal{H})$ it is
enough to investigate cycles that contain a given vertex $u$. Up to symmeties of $G(\mathcal{H})$
there is only one path of length 2. See Figure~\ref{fig:paths2}.
\begin{figure}[!ht]
\centering
\input{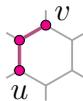}
\caption{The only type of path of length 2 in $G(\mathcal{H})$.}
\label{fig:paths2}
\end{figure}
Vertices $u$ and $v$ are not adjacent, so there are no triangles in $G(\mathcal{H})$. There are two
possible paths of length 3 (see Figure~\ref{fig:paths2a}).
\begin{figure}[!ht]
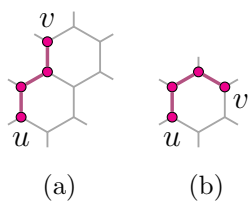

\centering
\subfigure[]{
\input{img/path3a.tikz}
}
\subfigure[]{
\input{img/path3b.tikz}
}
\caption{The two types of paths of length 3 in $G(\mathcal{H})$.}
\label{fig:paths2a}
\end{figure}
In both the endpoints are not adjacent, so there are no rectangles in $G(\mathcal{H})$.
Both paths of length 3 can be extended in two ways yielding four paths of length 4
(see Figure~\ref{fig:paths2b}).
\begin{figure}[!ht]
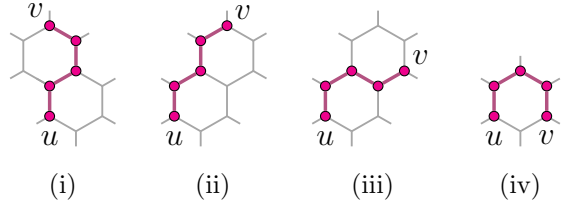

\centering
\renewcommand{\thesubfigure}{(\roman{subfigure})}
\subfigure[]{
\input{img/path4a.tikz}
}
\subfigure[]{
\input{img/path4b.tikz}
}
\subfigure[]{
\input{img/path4c.tikz}
}
\subfigure[]{
\input{img/path4d.tikz}
}
\caption{The four types of paths of length 4 in $G(\mathcal{H})$.}
\label{fig:paths2b}
\end{figure}
In all cases their endpoints are non-adjacent, so there are no 5-cycles in $G(\mathcal{H})$.
In cases (i), (ii) and (iii) we have that $d(u, v) = 4$. Therefore, these paths cannot
form 6-cycles. The case (iv) can be extended to a 5-path in two ways (see Figure~\ref{fig:paths2c}).
\begin{figure}[!ht]
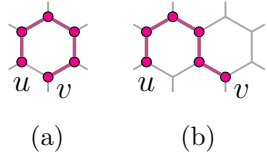

\centering
\subfigure[]{
\input{img/path5a.tikz}
}
\subfigure[]{
\input{img/path5b.tikz}
}
\caption{Paths of length 5 in $G(\mathcal{H})$.}
\label{fig:paths2c}
\end{figure}
In case (b), we have that $d(u, v) = 3$, which means that no 6-cycle can be formed.
In case (a), vertices $u$ and $v$ are adjacent and therefore form a 6-cycle. We have
examined all options and no other 6-cycles arise, which means that all of them can be
obtained as borders of hexagons. 
\end{proof}

\noindent Note that every vertex of $G(\mathcal{K})$ belongs to one, two or three 6-cycles and
that every edge of $G(\mathcal{K})$ belongs to one or two 6-cycles. Because
$G(\mathcal{K}) \subseteq G(\mathcal{H})$, every 6-cycle of $G(\mathcal{K})$ is also
a 6-cycle of $G(\mathcal{H})$. Therefore, for every 6-cycle $C$ in $G(\mathcal{K})$
there exists some $h \in \mathcal{H}$, such that $C = \partial h$. Let $u$ be an arbitrary
vertex of $G(\mathcal{K})$. It is incident with 3 hexagons, say $h_1, h_2$ and $h_3$,
of $\mathcal{H}$. By definition of skeleton, at least one of those hexagons must also
be in $\mathcal{K}$. Without loss of generality $h_1 \in \mathcal{K}$. Therefore,
$u$ belongs to 6-cycle $\partial h_1$. It may also happen that $h_2 \in \mathcal{K}$
and/or $h_3 \in \mathcal{K}$. Then vertex $u$ also belongs to cycle $\partial h_2$
and/or $\partial h_3$. Let $e$ be an arbitrary edge of $G(\mathcal{K})$. There exist
$h_1 \in \mathcal{H}$ and $h_2 \in \mathcal{H}$ such that $e \in \partial h_1$ and
$e \in \partial h_2$. By definition of skeleton at least one of them also belongs to
$\mathcal{K}$. It may also happen that both of them belong to $\mathcal{K}$. 
The following proposition is also obvious:

\begin{proposition}
Let $\mathcal{K} \subseteq \mathcal{H}$ be an arbitrary coronoid. Then the following
statements are true:
 \begin{enumerate}
\renewcommand{\labelenumi}{(\alph{enumi})}
\item Graph $G(\mathcal{K})$ is connected.
\item For every edge $e \in E(G(\mathcal{K}))$ there exists $h \in \mathcal{K}$ such
that $e \in \partial h$.
\item For every cycle $C \subseteq G(\mathcal{K})$ the inequality $|C| \geq 6$ holds.
The equality is attained if and only if there exists a hexagon $h \in \mathcal{H}$ such
that $C = \partial h$. Moreover, if $\mathcal{K}$ is non-degenerate then $h \in \mathcal{K}$.
\end{enumerate}
\end{proposition}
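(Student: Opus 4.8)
The plan is to dispatch the three parts in turn, leaning heavily on the structural results already established, so that only the final ``moreover'' clause requires genuine work.

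For part (a), I would first record the observation made in the paragraph preceding the proposition, namely that $G(\mathcal{K}) = \bigcup_{h \in \mathcal{K}} \partial h$: every vertex and every edge of the skeleton is incident with at least one hexagon of $\mathcal{K}$, hence lies on the border of some $h \in \mathcal{K}$. Since each $\partial h \cong C_6$ is connected, and since two adjacent hexagons $a \sim b$ share an edge so that $\partial a \cup \partial b$ is connected, it follows that whenever $a \equiv_{\mathcal{K}} b$ the union of borders taken along a connecting chain $c_0 = a, c_1, \ldots, c_m = b$ (all in $\mathcal{K}$) is connected. As $\mathcal{K}$ is a coronoid, hence a single equivalence class of $\equiv_{\mathcal{K}}$, all the borders $\partial h$ with $h \in \mathcal{K}$ are tied together into one connected graph, which is exactly $G(\mathcal{K})$.

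Part (b) is immediate from the definition of the skeleton: an edge of $G(\mathcal{K})$ is by definition incident with at least one hexagon of $\mathcal{K}$, and incidence of an edge $e$ with a hexagon $h$ means precisely $e \in \partial h$.

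For part (c) I would invoke Proposition~\ref{prop:girthoflattice6} directly. Because $G(\mathcal{K}) \subseteq G(\mathcal{H})$, any cycle $C$ of $G(\mathcal{K})$ is a cycle of $G(\mathcal{H})$; hence $|C| \geq \girth(G(\mathcal{H})) = 6$, with $|C| = 6$ if and only if $C = \partial h$ for some $h \in \mathcal{H}$. The only substantive step, and the one I expect to be the main obstacle, is the ``moreover'' clause. Suppose $\mathcal{K}$ is non-degenerate and $C = \partial h$ is a $6$-cycle of $G(\mathcal{K})$; I claim $h \in \mathcal{K}$. Assume instead $h \notin \mathcal{K}$. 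Each of the six edges of $\partial h$ lies in $G(\mathcal{K})$, so by part (b) each is incident with a hexagon of $\mathcal{K}$; but every edge of $\partial h$ is shared only by $h$ and one neighbour of $h$, so since $h \notin \mathcal{K}$ that neighbour must lie in $\mathcal{K}$. Running over all six edges shows $N(h) \subseteq \mathcal{K}$, i.e.\ $h$ has no neighbour in $\mathcal{K}^\complement$. Consequently the connected component of $\mathcal{K}^\complement$ containing $h$ is the singleton $\{h\}$, which is finite and therefore a corona hole by Lemma~\ref{lemma:decomposition}; being of size $1$ it is a degenerate corona hole, contradicting the non-degeneracy of $\mathcal{K}$. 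Hence $h \in \mathcal{K}$, which completes the argument.
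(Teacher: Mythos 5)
Your proof is correct and takes essentially the same route as the paper's: part (a) from the connectedness of $\mathcal{K}$ together with the fact that borders of adjacent hexagons share an edge, part (b) directly from the definition of the skeleton, and part (c) from Proposition~\ref{prop:girthoflattice6} plus the observation that if $\partial h \subseteq G(\mathcal{K})$ with $h \notin \mathcal{K}$, then all six neighbours of $h$ must lie in $\mathcal{K}$, making $\{h\}$ a degenerate corona hole and contradicting non-degeneracy. The only cosmetic difference is that in (a) the paper concatenates explicit paths along a chain of hexagons, whereas you argue via connectivity of the union of the borders $\partial h$; the underlying idea is identical.
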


\begin{proof}
To show that $G(\mathcal{K})$ is connected, we will find a $(u,v)$-path for any pair
of vertices $u, v \in V(G(\mathcal{K}))$. We already know that there exist $h_u$ and
$h_v$ such that $u \in \partial h_u$ and $v \in \partial h_v$. There exists a sequence
of hexagon $h_0 = h_v, h_1, \ldots, h_n = h_u$ such that $h_i$ and $h_{i+1}$ are adjacent
for all $i$. If two hexagons $h$ and $k$ are adjacent, there exists a path from any
vertex of $\partial h$ to any vertex of $\partial k$. On every hexagon $h_i$, $1 \leq i < n$,
choose a vertex $v_i$. Let $v_0 = v$ and $v_n = u$. There exist paths $P_i$ with
endvertices $v_i$ and $v_{i+1}$. Concatenation of paths $P_0, P_1, \ldots, P_{n-1}$ is
a $(u, v)$-walk.

Statement (b) was already in the discussion above.

If $H \subseteq G$, then $\girth(H) \geq \girth(G)$. The fact that
$G(\mathcal{K}) \subseteq G(\mathcal{H})$ and Proposition~\ref{prop:girthoflattice6} give
us the inequality. Those cycles which attain the equality are characterized by
Proposition~\ref{prop:girthoflattice6}. Now suppose that $h \notin \mathcal{K}$ and
that $\mathcal{K}$ is non-degenerate. Hexagon $h$ is sorrounded by 6 hexagons
$h_1, h_2, \ldots, h_6$ of $\mathcal{H}$. By definition of $G(\mathcal{K})$, at least
one of $h$ or $h_i$ must be in $\mathcal{K}$ for all i. This means that
$h_1, \ldots, h_6 \in \mathcal{K}$. But then $\{h\}$ is a corona hole of size 1,
a contradiction.
\end{proof}

\begin{proposition}
\label{prop:prop1}
For any coronoid $\mathcal{K}$, the subgraph $\partial \mathcal{K}$ forms a collection
of $b(\mathcal{K}) \geq 1$ cycles. A coronoid is a benzenoid if $b(\mathcal{K}) = 1$
and is a proper coronoid with $b(\mathcal{K}) - 1$ corona holes if $b(\mathcal{K}) > 1$.
\end{proposition}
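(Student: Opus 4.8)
The plan is to break the statement into three parts: (1) $\partial\mathcal{K}$ is a disjoint union of cycles; (2) the number of these cycles is exactly $b(\mathcal{K})$; and (3) the dichotomy $b(\mathcal{K})=1$ versus $b(\mathcal{K})>1$. Parts (1) and (3) are light, and the real work is in (2).

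For (1), I would show that every vertex of $\partial\mathcal{K}$ has degree exactly $2$. Fix a vertex $u$ of $G(\mathcal{K})$; it lies on three hexagons $h_1,h_2,h_3$ of $\mathcal{H}$, and the three lattice edges at $u$ separate consecutive pairs of them, so each pair shares one edge incident to $u$. I classify $u$ by how many of $h_1,h_2,h_3$ lie in $\mathcal{K}$ (at least one does, since $u\in G(\mathcal{K})$). If all three lie in $\mathcal{K}$, then all three edges are internal and $u$ is internal, hence absent from $\partial\mathcal{K}$. If exactly two lie in $\mathcal{K}$, the edge shared by those two is internal and the remaining two edges at $u$ are boundary edges, giving $u$ degree $2$ in $\partial\mathcal{K}$. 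If exactly one lies in $\mathcal{K}$, then the edge between the two missing hexagons is absent from $G(\mathcal{K})$ and the other two edges at $u$ are boundary edges, again giving degree $2$. Since a finite graph with no isolated vertices in which every vertex has degree $2$ is a disjoint union of cycles, this establishes (1).

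For (2), I would regard $G(\mathcal{K})$ as a plane graph and identify its faces. The $|\mathcal{K}|$ open hexagon interiors give bounded faces, each bordered by a $6$-cycle that may use internal edges. Every other face is a maximal region disjoint from the edges of $G(\mathcal{K})$: the open region swept out by one connected component of $\mathcal{K}^\complement$ meets no edge of $G(\mathcal{K})$ (interior edges of the component lie on no hexagon of $\mathcal{K}$), so it sits inside a single face. Using Theorem~\ref{thm:decompositionthm}, together with the observation that two distinct complement components cannot even touch at a lattice vertex (the three hexagons around a vertex are pairwise edge-adjacent, so two complement hexagons meeting at a vertex share an edge and hence lie in the same component), I get exactly $b(\mathcal{K})$ further faces: one per corona hole $\mathcal{B}_i$ and one for the exterior $\mathcal{B}_\infty$. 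Because $G(\mathcal{K})$ is $2$-connected, each face boundary is a cycle. The crucial matching step is that an edge bounds one of these $b(\mathcal{K})$ ``complement faces'' if and only if it is a boundary edge: such an edge separates a $\mathcal{K}$-hexagon from a non-$\mathcal{K}$-hexagon, so one of its two incident faces is a hexagon of $\mathcal{K}$ and the other is the complement face of the missing hexagon. Hence the $b(\mathcal{K})$ complement-face boundaries are pairwise edge-disjoint cycles whose union, as an edge set, is precisely $\partial\mathcal{K}$. Combining this with (1) — every vertex of $\partial\mathcal{K}$ has degree $2$ — forces each complement-face cycle to be a full connected component of $\partial\mathcal{K}$, so the number of cycles of $\partial\mathcal{K}$ equals $b(\mathcal{K})\ge 1$.

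Part (3) is then immediate from the decomposition. If $b(\mathcal{K})=1$, the complement $\mathcal{K}^\complement$ has only its infinite component, i.e.\ $\mathcal{K}^\complement$ is connected, which is exactly the definition of a benzenoid; if $b(\mathcal{K})>1$, there are $d=b(\mathcal{K})-1\ge 1$ finite components, which are the corona holes, so $\mathcal{K}$ is a proper coronoid with $b(\mathcal{K})-1$ corona holes. I expect the main obstacle to lie in step (2): cleanly justifying the bijection between complement components and the non-hexagonal faces (ruling out fragmentation of one component into several faces or fusion of two components into one), and then reconciling the edge-disjoint family of face cycles with the vertex-disjoint cycle decomposition from (1). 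By contrast the degree-$2$ case analysis in (1) is routine, and (3) merely restates the definitions.
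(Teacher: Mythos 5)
Your proof is correct, and two of its three parts coincide with the paper's own: the case analysis of boundary vertices giving $2$-regularity of $\partial\mathcal{K}$ is exactly the paper's first step, and the dichotomy in (3) is read off Theorem~\ref{thm:decompositionthm} in both proofs. Where you genuinely diverge is the counting step (2). The paper applies the Jordan curve theorem directly to the cycles of $\partial\mathcal{K}$: each hexagon of $\mathcal{K}^\complement$ lies in a region bounded by one of these cycles, a pathway of hexagons cannot cross a perimeter, hence cycles correspond to connected components of $\mathcal{K}^\complement$, of which there are $b(\mathcal{K})$. You instead realise each complement component as a face of the plane graph $G(\mathcal{K})$ (your vertex-touching observation correctly rules out two components landing in the same face, and density of hexagon interiors rules out extraneous faces), invoke $2$-connectivity of $G(\mathcal{K})$ so that every face boundary is a cycle, and then match these $b(\mathcal{K})$ pairwise edge-disjoint boundary cycles to the components of $\partial\mathcal{K}$ by the cycle-contained-in-a-cycle argument. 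Your route is more careful exactly where the paper is informal: the paper's claim that ``a pathway of hexagons cannot enter the region elsewhere but through the perimeter'' is asserted rather than argued, whereas your edge-disjointness bookkeeping (each boundary edge has a $\mathcal{K}$-hexagon on one side and exactly one complement face on the other) makes both fragmentation and fusion visibly impossible. The price is that you import the $2$-connectivity of $G(\mathcal{K})$ (which the paper states but never proves) together with the standard theorem that faces of $2$-connected plane graphs are bounded by cycles; since that theorem itself rests on the Jordan curve theorem, the two proofs share the same topological bottom, just packaged through standard plane-graph theory in your case and through an ad hoc separation argument in the paper's.
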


\begin{proof}
Look at Figure~\ref{fig:typesofboundaryvert}. There are two types of boundary vertex:
the one in Figure~\ref{subfig:case1} is incident with only one hexagon of $\mathcal{K}$;
the one in Figure~\ref{subfig:case2} is incident with 2 hexagons of $\mathcal{K}$. We
consider the subgraph containing boundary vertices and edges. In the first case vertex
$v$ is incident with edges $e$ and $e'$ and has degree 2. In the second case, $v$ is
incident with $e$ and $e''$ and also has degree 2. Therefore, the subgraph containing
boundary vertices and edges is a 2-regular graph which is a union of vertex-disjoint cycles.
\begin{figure}[!ht]
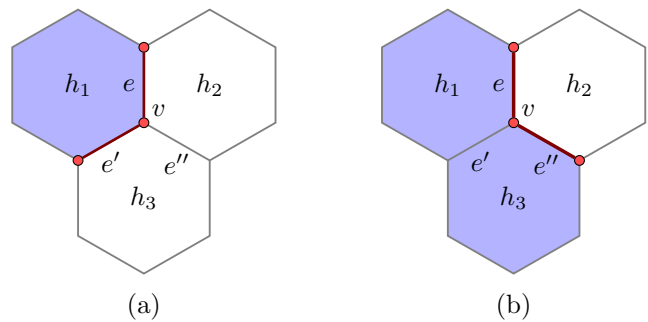

\centering
\subfigure[]{
\label{subfig:case1}
\input{img/border1.tikz}
}
\qquad
\subfigure[]{
\label{subfig:case2}
\input{img/border2.tikz}
}
\caption{Two types of boundary vertices. Shaded hexagons are present in the
coronoid. Vertex $v$ is of degree 2 in the first case and of degree 3 in the second case.
Boundary vertices and edges are emphasized.} 
\label{fig:typesofboundaryvert}
\end{figure}
By the Jordan curve theorem \cite{thomassen1992}, every cycle splits the plane into two
disconnected regions. Consider a hexagon $h \in \mathcal{K}^\complement$. It is contained
in a region that is surrounded by one of the cycles. No member of $\mathcal{K}$ is inside
this region, because a pathway of hexagons cannot enter the region elsewhere but through
the perimeter. Therefore, perimeters separate corona holes from the coronoid $\mathcal{K}$.
By Theorem~\ref{thm:decompositionthm}, there must be the same number of cycles as there
are corona holes (including the outer face).
\end{proof}

\noindent For two graphs $G$ and $H$ by $H \hookrightarrow G$ we denote embedding (injective homomorphism)
of graph $H$ into $G$. Note that graph $G(\mathcal{K})$ is not a plane graph and does not
possess any geometric information. Let $\mathcal{A}$ be anthracene. From
Figure~\ref{fig:drawinganthracene}, it is clear that $G(\mathcal{A})$ can be drawn in the
plane in many different ways.

\begin{figure}[!h]
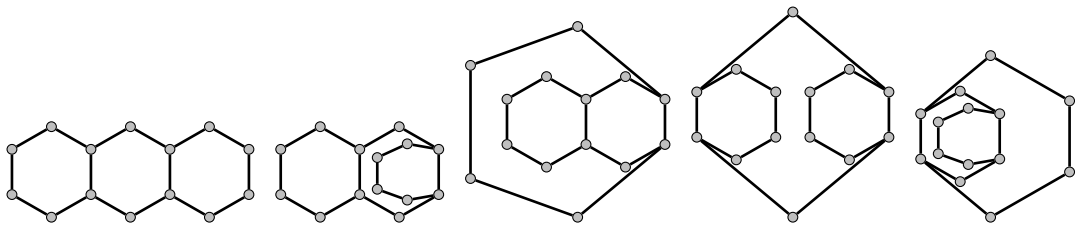

\centering
\input{img/anthracene.tikz}
\input{img/anthracene2.tikz}
\input{img/anthracene3.tikz}
\input{img/anthracene4.tikz}
\input{img/anthracene5.tikz}
\caption{Different drawings of the anthracene graph $G(\mathcal{A})$ in the Euclidean plane.}
\label{fig:drawinganthracene}
\end{figure}

\noindent The most appropriate drawing of anthracene for chemical purposes is the left-most. It is the
only one that can be obtained from the hexagonal grid $\mathcal{H}$. There is an embedding
of a coronoid graph into the hexagonal grid $\mathcal{H}$ called the \emph{natural embedding}.
Recall that $G(\mathcal{K})$ was obtained from $\mathcal{K}$ by taking all 0-cells and
1-cells of corresponding subcomplex. The natural embedding just sends the graph back to
its place of birth. We have the following theorem to tell us that there is only one drawing
up to symmetries of the hexagonal grid: 

\begin{theorem}
\label{thm:uniqueembedding}
Let $\mathcal{K}$ be a coronoid and let $C \subseteq G(\mathcal{K})$ be a cycle of length 6
($|C| = 6$). Then $C \hookrightarrow G(\mathcal{H})$ can be extended to
$G(\mathcal{K}) \hookrightarrow G(\mathcal{H})$ in an unique way.
\end{theorem}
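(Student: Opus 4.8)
The plan is to reduce everything to Proposition~\ref{prop:girthoflattice6}, which says that the $6$-cycles of $G(\mathcal{H})$ are exactly the hexagon boundaries $\partial h$, and then to propagate an embedding hexagon by hexagon using the connectedness of $\mathcal{K}$. First I would note that, since $G(\mathcal{K}) \subseteq G(\mathcal{H})$, the given cycle $C$ is itself a $6$-cycle of $G(\mathcal{H})$, so $C = \partial h$ for a unique hexagon $h$; in the non-degenerate case $h \in \mathcal{K}$, and in any event each of the six edges of $C$ lies in $G(\mathcal{K})$ and hence on the boundary of some hexagon of $\mathcal{K}$. I will prove existence and uniqueness of the extension separately, the latter being the substantive part.

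The technical core is a local rigidity lemma. For any embedding $\psi \colon G(\mathcal{K}) \hookrightarrow G(\mathcal{H})$ and any hexagon $k \in \mathcal{K}$, the restriction $\psi|_{\partial k}$ is an injective homomorphism $C_6 \to G(\mathcal{H})$, so its image is a $6$-cycle and therefore, by Proposition~\ref{prop:girthoflattice6}, equals $\partial g$ for a unique hexagon $g$. If $k, k' \in \mathcal{K}$ are adjacent and share the edge $e$, then $\psi(e)$ is a common edge of $\psi(\partial k) = \partial g$ and of $\psi(\partial k')$. Because every edge of $\mathcal{H}$ lies on exactly two hexagon boundaries, and since $\psi$ is injective on the $10$ vertices of $\partial k \cup \partial k'$ (so $\psi(\partial k) \neq \psi(\partial k')$), the cycle $\psi(\partial k')$ must be the hexagon boundary on the side of $\psi(e)$ \emph{opposite} to $\psi(\partial k)$. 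Knowing that hexagon together with the images of the two endpoints of $e$ then forces the images of the remaining four vertices of $\partial k'$ by reading off the cyclic order. Hence $\psi|_{\partial k}$ determines $\psi|_{\partial k'}$ for every neighbour $k'$ of $k$ in $\mathcal{K}$.

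Uniqueness follows by propagation. Suppose $\psi_1$ and $\psi_2$ are extensions of the given $\phi \coloneqq \psi|_C$; they agree on $\partial h$. For any hexagon $k \in \mathcal{K}$, connectedness of $\mathcal{K}$ (the relation $\equiv_{\mathcal{K}}$) supplies a sequence $h = h_0 \sim h_1 \sim \cdots \sim h_m = k$ of hexagons all lying in $\mathcal{K}$, and the rigidity lemma lets me induct along it: agreement on $\partial h_i$ forces agreement on $\partial h_{i+1}$ through their shared edge. Thus $\psi_1$ and $\psi_2$ agree on every $\partial k$, and since every vertex and edge of $G(\mathcal{K})$ lies on $\partial k$ for some $k \in \mathcal{K}$, I conclude $\psi_1 = \psi_2$. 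For existence I would combine the natural (inclusion) embedding $\iota \colon G(\mathcal{K}) \hookrightarrow G(\mathcal{H})$ with the symmetry of the grid: the image $\phi(C)$ is a $6$-cycle $\partial h'$, and $\phi$ is an isomorphism $\partial h \to \partial h'$; since $\Aut(G(\mathcal{H}))$ is transitive on hexagons and realises all twelve symmetries of a single hexagon, there is an automorphism $\sigma$ of $G(\mathcal{H})$ with $\sigma|_{\partial h} = \phi$ (unique, because an automorphism fixing the six vertices of $\partial h$ fixes their external neighbours and hence, by connectedness, everything). Then $\psi \coloneqq \sigma \circ \iota$ is an embedding with $\psi|_C = \phi$.

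I expect the main obstacle to be the local rigidity lemma, specifically justifying cleanly that the image of an adjacent hexagon is the hexagon on the far side of the shared edge and that the vertex correspondence is then completely forced. This is exactly where Proposition~\ref{prop:girthoflattice6} (girth $6$, with equality attained only on hexagon boundaries) and the ``exactly two hexagons per edge'' property of $\mathcal{H}$ carry the argument; particular care is needed to ensure that the injectivity of $\psi$ genuinely excludes the degenerate possibility of an adjacent hexagon folding back onto its neighbour.
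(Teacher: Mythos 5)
Your proposal is correct, and its uniqueness half runs on the same engine as the paper's proof: the image of a hexagon boundary under an embedding is a $6$-cycle, hence by Proposition~\ref{prop:girthoflattice6} a hexagon boundary; across a shared edge only two hexagons of $\mathcal{H}$ are available, injectivity excludes the one already used, and the forced choice propagates through adjacent hexagons by connectedness of $\mathcal{K}$. The paper packages this as a greedy construction (order the hexagons of $\mathcal{K}$ by hexagon-distance from the one bounded by $C$ and embed their boundaries one at a time, each step being forced), intended to deliver existence and uniqueness simultaneously; your packaging as a rigidity lemma comparing two given embeddings $\psi_1,\psi_2$ is cleaner for uniqueness but produces no embedding by itself. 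Where you genuinely diverge is existence: you compose the natural embedding with an automorphism of $G(\mathcal{H})$, using transitivity on hexagons together with the full dihedral stabilizer of a hexagon. This is rigorous and sidesteps what the paper's construction quietly ignores, namely that a vertex reachable from two different already-embedded hexagons must receive a consistent image and that the final map is globally injective; that is a real advantage of your route. One small patch is still needed: your propagation starts from a sequence $h = h_0 \sim h_1 \sim \cdots \sim h_m = k$ of hexagons \emph{all lying in} $\mathcal{K}$, which presupposes $h \in \mathcal{K}$. If $\mathcal{K}$ is degenerate, $C$ may bound a corona hole $h \notin \mathcal{K}$ (the paper's proof has the same blind spot). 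The repair is the one you set up yourself: every edge of $C$ must be incident to a hexagon of $\mathcal{K}$, so all six neighbours of $h$ lie in $\mathcal{K}$; your rigidity step uses only that $\partial k$ and $\partial k'$ are subgraphs of $G(\mathcal{K})$ sharing an edge, so it applies verbatim to the pairs $(h, h_i)$, after which the propagation proceeds inside $\mathcal{K}$.
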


\begin{proof}
Let $e = uv \in E(C_6)$. Then $e \hookrightarrow G(\mathcal{H})$ can be extended to
$C_6 \hookrightarrow G(\mathcal{H})$ in two different ways.

The distance between hexagons $h$ and $k$ in $\mathcal{K}$, $d(h, k)$, is the smallest
$n$ for which a sequence $h_0=h, h_1, \ldots, h_n = k$ of sequentially adjacent hexagons
of $\mathcal{K}$ exist. Let $\mathcal{K} = \{h_1, h_2, \ldots, h_d\}$. Suppose that
$C = \partial h_1$ and that $d(h_1, h_i) \leq d(h_1, d_{i+1})$. We start with
$C \hookrightarrow G(\mathcal{H})$ and extend it step by step. On $i$-th step we find images
of those vertices of $\partial h_i$ that are not already embedded. For $h_i$ there exists
some $h_j$, $j < i$, such that $\partial h_j$ was already embedded and $h_i$ shares an edge
$e_i$ with $h_j$. If vertices of $\partial h_i$ were already embedded, there is nothing
left to do. Otherwise, $e_i \hookrightarrow G(\mathcal{H})$ can be extended to
$\partial h_i \hookrightarrow G(\mathcal{H})$ in two ways. As we are constructing an injective
homomorphism, images of $\partial h_j$ and $\partial h_i$ may not overlap. Therefore,
only one option remains. 
\end{proof}

\noindent This constructive proof gives rise to an algorithm for embedding an arbitrary coronoid
graph $G$ into the hexagonal lattice. If anything goes wrong during this procedure, that
means that the input graph $G$ was not a valid coronoid graph. If the graph $G$ is a
valid coronoid graph, the algorithm will always finish successfully.

Theorem~\ref{thm:uniqueembedding} no longer holds, if we permit arbitrary
subgraphs of $G(\mathcal{H})$. (See the examples given in Figure~\ref{fig:notunique},
Figure~\ref{fig:bad_embedding} and Figure~\ref{fig:bad_embedding2}.)

\begin{figure}[!h]
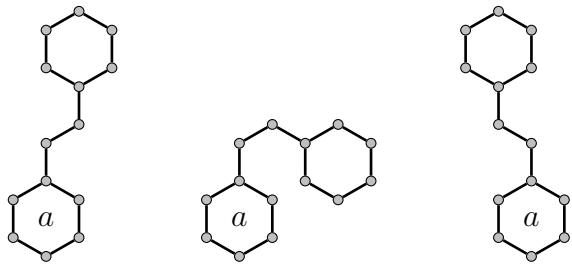

\centering
\input{img/not_unique1.tikz} \qquad
\input{img/not_unique2.tikz} \qquad
\input{img/not_unique3.tikz}
\caption{A graph that consists of two hexagons that are connected by a path of length 3
can be embedded in the hexagonal lattice in more than one way, even when the hexagon
denoted by $a$ is fixed.}
\label{fig:notunique}
\end{figure}

\begin{figure}[!ht]
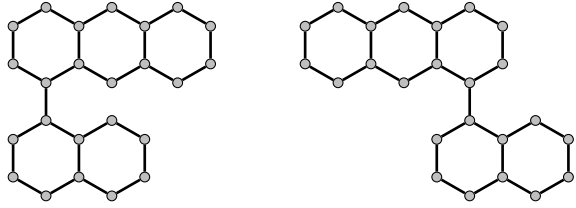

\centering
\input{img/another_not_unique1.tikz}
\qquad
\input{img/another_not_unique2.tikz}
\caption{Another example of a non-coronoid subgraph of the lattice that can be
embedded in two different ways.}
\label{fig:bad_embedding}
\end{figure}

\begin{figure}[!ht]
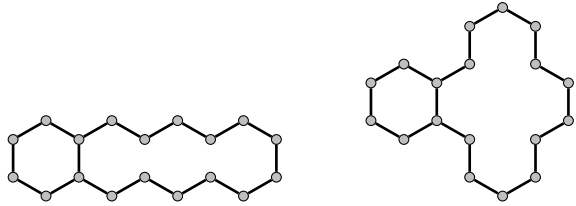

\centering
\input{img/2conn_notuniq.tikz} \qquad
\input{img/2conn_notuniq2.tikz}
\caption{An example of a 2-connected non-coronoid graph that can be embedded in more than
one way. (Not all embeddings are listed.)}
\label{fig:bad_embedding2}
\end{figure}

\begin{lemma}
\label{lemma:thereisanondeg}
Let $\mathcal{K}$ be a coronoid and let $G \coloneqq G(\mathcal{K})$. Then there exists
(up to symmetry of $\mathcal{H}$) exactly one non-degenerate coronoid $\mathcal{N}$ such
that $G = G(\mathcal{N})$. Moreover, $\mathcal{N} \cong \mathit{NonDeg}(\mathcal{K})$.
\end{lemma}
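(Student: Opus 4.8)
The plan is to prove existence by exhibiting $\mathit{NonDeg}(\mathcal{K})$ itself as a witness, and to prove uniqueness by reconstructing the hexagons of any competing coronoid from the $6$-cycles of $G$ and then aligning the two natural embeddings by a single grid symmetry.

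First I would settle existence. Set $\mathcal{M} := \mathit{NonDeg}(\mathcal{K})$; by the remark following its definition, $\mathcal{M}$ is a non-degenerate coronoid obtained from $\mathcal{K}$ by adding exactly its degenerate corona holes. I claim $G(\mathcal{M}) = G(\mathcal{K}) = G$. Indeed, if $h$ is a degenerate corona hole then $\{h\}$ is an entire component of $\mathcal{K}^\complement$, so by Lemma~\ref{lemma:neighbours} none of the six neighbours of $h$ lies in $\mathcal{K}^\complement$; hence all six neighbours belong to $\mathcal{K}$. Consequently every edge and every vertex of $\partial h$ is already incident with a hexagon of $\mathcal{K}$ and so already lies in $G(\mathcal{K})$. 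Thus adjoining $h$ introduces no new $0$- or $1$-cells, and an induction over the finitely many degenerate holes gives $G(\mathcal{M}) = G(\mathcal{K})$, which proves existence.

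For uniqueness the key observation is that, for a non-degenerate coronoid, the hexagons are in bijection with the $6$-cycles of its skeleton: by Proposition~\ref{prop:girthoflattice6} a $6$-cycle of $G(\mathcal{H})$ is exactly some $\partial h$, and by the preceding proposition (part (c)) non-degeneracy forces $h$ to belong to the coronoid. Hence the hexagon set of any non-degenerate coronoid is an invariant of the abstract graph $G$. Now let $\mathcal{N}$ be any non-degenerate coronoid with a graph isomorphism $\psi \colon G(\mathcal{M}) \to G(\mathcal{N})$, both realised as natural (inclusion) subgraphs of $G(\mathcal{H})$. Choose a $6$-cycle $C = \partial h \subseteq G(\mathcal{M})$; then $\psi(C)$ is a $6$-cycle $\partial h' \subseteq G(\mathcal{N})$, and $\psi|_C$ is a dihedral relabelling of the hexagon border. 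Since $\Aut(\mathcal{H})$ acts transitively on flags (a hexagon together with a directed boundary edge), there is $\sigma \in \Aut(\mathcal{H})$ with $\sigma|_C = \psi|_C$.

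Finally I would compare the inclusion $\iota \colon G(\mathcal{M}) \hookrightarrow G(\mathcal{H})$ with the embedding $x \mapsto \sigma^{-1}(\psi(x))$; both are injective homomorphisms of $G(\mathcal{M})$ into $G(\mathcal{H})$ that restrict to the identity on $C$, so Theorem~\ref{thm:uniqueembedding} forces them to coincide, i.e.\ $\psi = \sigma|_{G(\mathcal{M})}$. Thus $\sigma$ carries $G(\mathcal{M})$ onto $G(\mathcal{N})$; since $\sigma$ maps hexagon borders to hexagon borders and hexagons biject with $6$-cycles on both sides, $\sigma(\mathcal{M}) = \mathcal{N}$, whence $\mathcal{N} \cong \mathcal{M} = \mathit{NonDeg}(\mathcal{K})$. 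I expect the main obstacle to be the bookkeeping of this last alignment step: one must check that a graph isomorphism really does restrict on the chosen $6$-cycle to a symmetry realizable in $\Aut(\mathcal{H})$, and that equality of the two embeddings upgrades from vertices and edges to equality of the underlying hexagon sets — this is exactly where non-degeneracy (the hexagon--$6$-cycle bijection) and the uniqueness of extension in Theorem~\ref{thm:uniqueembedding} are both essential.
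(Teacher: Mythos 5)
Your proof is correct, and it rests on the same two pillars as the paper's: Theorem~\ref{thm:uniqueembedding} and the fact that in a non-degenerate coronoid the hexagons biject with the $6$-cycles of the skeleton (part (c) of the proposition following Proposition~\ref{prop:girthoflattice6}). The organisation, however, runs in the opposite direction. The paper works graph-first: it embeds an arbitrary $6$-cycle of $G$ onto some $\partial h$, extends uniquely to $\Phi \colon G \hookrightarrow G(\mathcal{H})$, defines $\mathcal{N} \coloneqq \{ h \in \mathcal{H} \mid \partial h \subseteq \Phi(G) \}$, checks that this $\mathcal{N}$ is non-degenerate, and only then identifies it with $\mathit{NonDeg}(\mathcal{K})$; your existence argument (all six neighbours of a degenerate corona hole lie in $\mathcal{K}$, so adjoining the hole adds no $0$- or $1$-cells) is the mirror image of the paper's step showing that every $h \in \mathcal{N} \setminus \mathcal{K}$ has all its neighbours in $\mathcal{K}$ and is therefore a degenerate hole. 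You instead exhibit $\mathit{NonDeg}(\mathcal{K})$ directly as the witness and then confront an arbitrary competitor $\mathcal{N}$ head-on, converting an abstract graph isomorphism $\psi$ into a grid symmetry $\sigma$ via flag-transitivity of $\Aut(\mathcal{H})$ together with the uniqueness clause of Theorem~\ref{thm:uniqueembedding}. What each buys: the paper's construction recovers the coronoid from the abstract graph, which is what powers the embedding algorithm discussed after Theorem~\ref{thm:uniqueembedding}; your alignment argument is the only one of the two that literally proves the ``exactly one, up to symmetry'' clause, since the paper constructs a single witness and never compares two non-degenerate coronoids with the same skeleton. Your appeal to flag-transitivity is sound (the translations $\phi_1, \phi_2$ act transitively on hexagons, and the stabiliser of a hexagon induces the full dihedral group of order $12$ on its border, so any cycle isomorphism $\partial h \to \partial h'$ is realised by some $\sigma \in \Aut(\mathcal{H})$), but be aware this fact is nowhere stated in the paper; it is silently used in the paper's own ``without loss of generality, $\Phi$ is the natural embedding'' step, so your proof is no worse off for invoking it explicitly.
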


\begin{proof}
Choose some hexagon $h \in \mathcal{H}$ and choose an arbitrary 6-cycle $C$ in $G$.
Let $\phi \colon C \hookrightarrow G(\mathcal{H})$ be an embedding such that
$\phi(C) = \partial h$. By Theorem~\ref{thm:uniqueembedding}, $\phi$ can be
extended to $\Phi \colon G \hookrightarrow G(\mathcal{H})$ in a unique way. Let
$\mathcal{N} = \{ h \in \mathcal{H} \mid \partial h \subseteq \Phi(G) \}$. For
every 6-cycle $C$ in $\Phi(G)$ there exists some $h_C \in \mathcal{N}$ such that
$\Phi(C) = \partial h_C$.

First we show that $\mathcal{N}$ is non-degenerate. Suppose that it has a corona hole
of size 1, i.e., there exists $\widetilde{h} \in \mathcal{N}^\complement$ which is
sorrounded by $h_1, h_2, \ldots, h_6 \in \mathcal{N}$. Every edge of cycle
$\partial \widetilde{h}$ is present in $\Phi(G)$ because all hexagons $h_1, \ldots, h_6$
are present in $\mathcal{N}$. But then, by definition of $\mathcal{N}$, also
$\widetilde{h} \in \mathcal{N}$. A contradiction.

Now, we show that $\mathcal{N} \cong \mathit{NonDeg}(\mathcal{K})$. Without loss of
generality, we can assume that $\Phi$ is the natural embedding. If $k \in \mathcal{K}$
then from the definition of $G(\mathcal{K})$ we conclude that $\partial k \in G(\mathcal{K})$.
Therefore, $k \in \mathcal{N}$. This means that $\mathcal{K} \subseteq \mathcal{N}$.

Let $h \in \mathcal{N} \setminus \mathcal{K}$. Let $\tilde{h} \in N(h)$ and suppose
that $\tilde{h} \notin \mathcal{K}$. We know that $h$ and $\tilde{h}$ share an edge $e$
which does not belong to $G(\mathcal{K})$, because neither $h$ nor $\tilde{h}$ belongs
to $\mathcal{K}$. But $e \in G(\mathcal{K})$ by definition of $\mathcal{N}$. This is a
contradiction. Therefore, $\tilde{h} \in \mathcal{K}$ for every $\tilde{h} \in N(h)$.
With other words, $h$ is a degenerate corona hole inside $\mathcal{K}$. That means
that $\mathcal{N}$ is obtained from $\mathcal{K}$ by adding degenerate corona holes.
\end{proof}

\noindent We use the terminology of Gutman and Cyvin \cite{gutman1989}. Each boundary cycle is
called a \emph{perimeter.} In a proper coronoid there is one \emph{outer perimeter} and
one or more \emph{inner perimeters}. It may happen that one of the inner perimeters is
longer than the outer perimeter (see Figure \ref{fig:long_border}).
\begin{figure}[!h]
\centering
\input{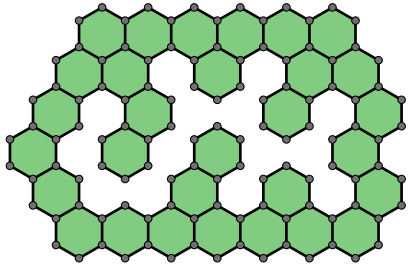}
\caption{The outer perimeter of this single coronoid is of length 48 whilst the inner
perimeter is of length 58.}
\label{fig:long_border}
\end{figure}
Nevertheless, the cycle of a coronoid graph that belong to the outer perimeter can be
easily recognised. By Theorem~\ref{thm:uniqueembedding}, a coronoid can be embedded in
the hexagonal lattice in a unique way (up to symmetry). Then the left-most vertex of
the coronoid belongs to the outer perimeter.

A \emph{corona hole} is a unique benzenoid that can fill the interior of an inner perimeter.
An inner perimeter of a coronoid may be viewed as the (outer) perimeter of the corresponding
corona hole. The roles of boundary vertices of degrees 2 and 3 are interchanged when we make
this change of viewpoint. A corona hole of a non-degenerate coronoid has at least 2 hexagons
and the corresponding inner perimeter has at least two vertices of degree 2. A typical
coronoid is schematically illustrated in Figure~\ref{fig:fig3}.

\begin{figure}[!h]
\centering
\input{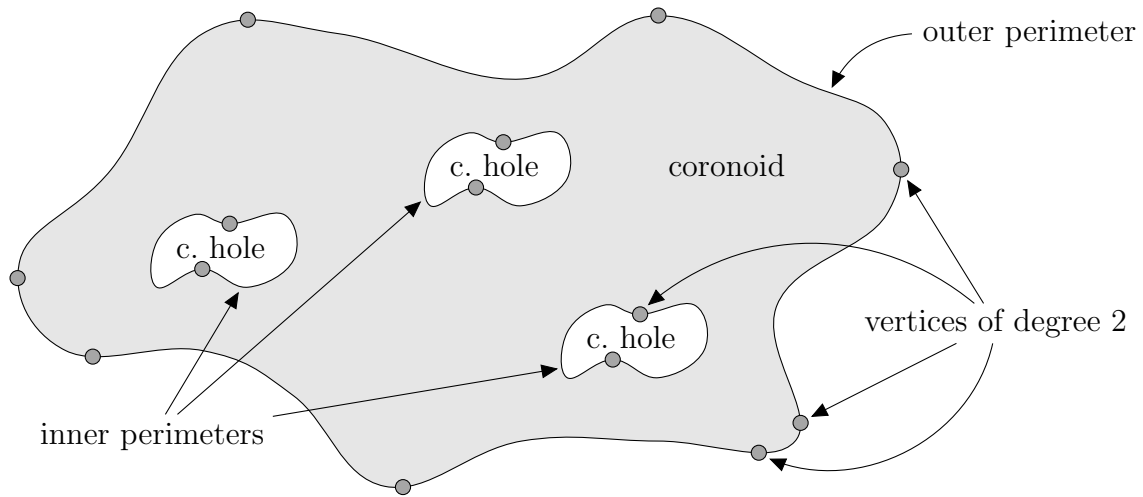}
\caption{Schematic illustration of a typical coronoid. The outer perimeter has at least 6
vertices of degree 2 and any inner perimeter has at least 2 vertices of degree 2.}
\label{fig:fig3}
\end{figure}

\noindent There are exactly two vertices of degree 2 on the inner perimeter in the case of a naphthalene
corona hole. For all other corona holes the number of such vertices is strictly greater than 2.
A lower bound can easily be obtained from the equations in \cite{gutman1989}.

\begin{proposition}
Let $h$, $n$, $m$ and $n_i$ denote, respectively, the number of hexagons, vertices,
edges and internal vertices in a corona hole. The number of boundary vertices of degree 3,
which correspond to vertices of degree 2 on the inner perimeter, is
$$
2h - 2 - n_i = n - 2h - 4 \geq \sqrt{12h - 3} - 3.
$$
\end{proposition}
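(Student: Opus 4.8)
The plan is to treat the corona hole as a benzenoid $\mathcal{B}$ with the stated parameters $h$, $n$, $m$, $n_i$ and to prove the two claimed equalities by elementary counting, and then to obtain the inequality from the known extremal bound on the number of internal vertices. Throughout I use the facts established earlier that every vertex of a coronoid graph has degree $2$ or $3$, that internal vertices are exactly the degree-$3$ vertices incident only with internal edges, and that a corona hole is itself a benzenoid, hence simply connected.

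First I would record the two structural inputs about $\mathcal{B}$. Since $G(\mathcal{B})$ is a connected plane graph with $h$ bounded (hexagonal) faces and one unbounded face, Euler's formula gives $m = n + h - 1$. Writing $n_2$ and $n_3$ for the numbers of degree-$2$ and degree-$3$ vertices, the relations $n = n_2 + n_3$ and $2m = 2n_2 + 3n_3$ yield $n_3 = 2(m-n) = 2h - 2$. The internal vertices number $n_i$ and are degree-$3$ vertices, so every remaining degree-$3$ vertex is a boundary vertex; hence the number of boundary vertices of degree $3$ equals $n_3 - n_i = 2h - 2 - n_i$. This is the first equality.

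For the second equality I would count incident (vertex, hexagon) pairs in $\mathcal{B}$. Summing over the $h$ hexagons gives $6h$ such pairs. Summing over vertices, and reading off the local pictures of the two boundary types in Figure~\ref{fig:typesofboundaryvert}, each internal vertex lies on $3$ hexagons of $\mathcal{B}$, each boundary vertex of degree $3$ on $2$, and each boundary vertex of degree $2$ on exactly $1$. Using $n_2 = n - n_3 = n - 2h + 2$ together with the count $2h - 2 - n_i$ of boundary degree-$3$ vertices obtained above, this gives $6h = 3n_i + 2(2h - 2 - n_i) + (n - 2h + 2)$, which simplifies to $n = 4h + 2 - n_i$ and hence to $n - 2h - 4 = 2h - 2 - n_i$, as required. (Equivalently, if $b$ denotes the inner-perimeter length then $b = n - n_i = 4h + 2 - 2n_i$, so the quantity in question is exactly $b/2 - 3$, and $b_2 - b_3 = 6$.)

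The inequality is the substantive part, and I expect it to be the main obstacle: the two equalities are routine Euler-formula and incidence bookkeeping, whereas $2h - 2 - n_i \ge \sqrt{12h - 3} - 3$ is equivalent to the upper bound $n_i \le 2h + 1 - \sqrt{12h - 3}$ on the number of internal vertices, i.e., in isoperimetric form, to $b \ge 2\sqrt{12h - 3}$: a benzenoid of given size cannot have too short a perimeter. This is precisely the maximality bound for internal vertices that follows from the equations of Gutman and Cyvin \cite{gutman1989}, the Harary--Harborth-type estimate attained by the most compact (spiral) benzenoids; equality already occurs for a single hexagon and for coronene. Concretely I would invoke $n_i \le 2h + 1 - \lceil \sqrt{12h - 3}\, \rceil$ and, since $\lceil \sqrt{12h - 3}\, \rceil \ge \sqrt{12h - 3}$, substitute this into $2h - 2 - n_i$ to conclude $2h - 2 - n_i \ge \sqrt{12h - 3} - 3$. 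The honest work lies entirely in supplying (or quoting) this extremal bound rather than in manipulating the identities.
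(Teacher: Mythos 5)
Your proof is correct, and its logical skeleton coincides with the paper's: establish $\nu = 2h-2-n_i$ for the number of boundary degree-$3$ vertices, establish $n = 4h+2-n_i$, eliminate $n_i$, and finish with an extremal bound. The difference is one of provenance rather than structure. The paper's proof is purely citation-based: all three ingredients, the identity $\nu = 2h-2-n_i$, the identity $n = 4h+2-n_i$, and the inequality $2h+1+\sqrt{12h-3}\leq n$, are quoted from Gutman and Cyvin \cite{gutman1989}, and the proof is just the substitution. You instead derive the two identities from first principles: Euler's formula gives $m = n+h-1$, degree counting gives $n_3 = 2(m-n) = 2h-2$ so that the boundary degree-$3$ count is $n_3 - n_i$, and the incidence count $6h = 3n_i + 2(2h-2-n_i) + (n-2h+2)$ yields $n = 4h+2-n_i$; these computations are all correct (as is your side remark that $\nu = b/2 - 3$ and $b_2 - b_3 = 6$). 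Only the extremal bound is quoted, in the equivalent Harary--Harborth form $n_i \leq 2h+1-\lceil\sqrt{12h-3}\,\rceil$, which under $n = 4h+2-n_i$ is exactly the paper's $n \geq 2h+1+\sqrt{12h-3}$. What your route buys is self-containment of the bookkeeping, making clear that the two equalities need nothing beyond Euler's formula and planarity; what the paper's route buys is brevity. Both proofs rest on the same nontrivial quoted fact, the isoperimetric lower bound on $n$ in terms of $h$, which neither proves, and you are explicit and accurate in identifying that as the only substantive external input.
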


\begin{proof}
Let $\nu$ denote the number of boundary vertices of degree 3 that belong to the corona hole
(those vertices are exactly degree-2 vertices of the corresponding inner perimeter).
In \cite{gutman1989} one can find the following equation:
\begin{equation}
\label{eqn:benzorel1}
\nu = 2h - 2 - n_i.
\end{equation}
Using $n = 4h + 2 - n_i$ \cite{gutman1989}, we can eliminate variable $n_i$ from
(\ref{eqn:benzorel1}) to obtain $\nu = n - 2h - 4$. Then apply $2h + 1 + \sqrt{12h - 3} \leq n$
\cite{gutman1989} to get $\nu \geq \sqrt{12h - 3} - 3$.
\end{proof}

\section{Patches and Perforated Patches}
First we generalise benzenoids and coronoids to patches and perforated patches, respectively.
Essentially, a \emph{patch} is a (2-connected) plane graph similar to a benzenoid in which various
polygons may be used instead of hexagons alone. All internal vertices are of degree 3, whilst
boundary vertices are of degree 2 or 3 (see example in Figure \ref{fig:patchexample}).
\begin{figure}[!h]
\centering
\input{img/patch.tikz}
\caption{A patch.}
\label{fig:patchexample}
\end{figure}

We will present a mathematical formalisation which is based on the treatment of coronoids
and benzenoids in the previous section of this paper. As we will see later, our definition
of a \emph{fullerene patch} is compatible with Graver's definition
\cite{gravergraves2010,gravergravesgraves2014}. There is also a notion of a
$(m, k)$-patch which received a lot of attention in the past years
\cite{brinkmann2005,brinkmann2009,graver2003,guo2002}. By our definition, faces may have
a range of different degrees, but $m = 3$.

Our point of departure is a finite plane cubic (simple) graph $G$, which divides the
Euclidean plane $\mathbb{R}^2$ into several regions called faces. The collection of all faces
is denoted $\mathcal{F}_G$. One face is unbounded and the rest are bounded. Two examples of
plane cubic graphs are in Figure~\ref{fig:examplesOfPlaneGraphs}.
\begin{figure}[!h]
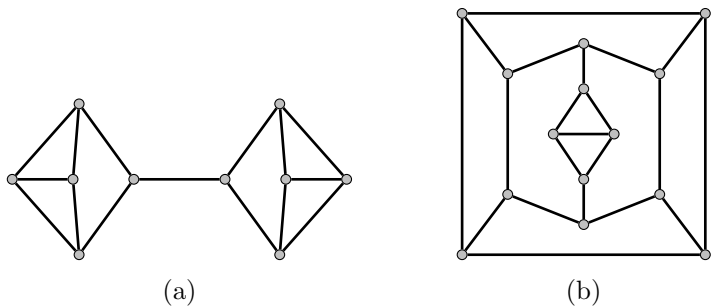

\centering
\subfigure[]{
\label{sfig:toucesitself}
\input{img/cubic01.tikz}
}
\qquad
\subfigure[]{
\label{sfig:touchesmore}
\input{img/cubic02.tikz}
}
\caption{Two examples of cubic plane graphs.}
\label{fig:examplesOfPlaneGraphs}
\end{figure}
Note that in the previous section, the role of graph $G$ was taken by an infinite cubic graph
with all faces hexagons which we called hexagonal lattice. Here we restrict our attention to
finite graphs $G$. Later on, we will compare finite and infinite versions of this theory.

The hexagonal lattice has additional nice properties. First of all, no two faces of $\mathcal{H}$
share more than one edge; the graph in Figure~\ref{sfig:touchesmore} does not have that
property. Also, no face of $\mathcal{H}$ is incident with itself; the graph in
Figure~\ref{sfig:toucesitself} does not have that property, since one if its edges is incident
to only one face (the outer face). Here, we also permit graphs which are not 2-connected,
such as the one on Figure~\ref{sfig:toucesitself}.

Let $\mathcal{P} \subseteq \mathcal{F}_G$ be some arbitrary subcollection of faces and let
$a, b \in \mathcal{P}$. We say that $a$ and $b$ are \emph{adjacent}, $a \sim b$, if they share an
edge. We define relation $\equiv_\mathcal{P}$ in exact same way as before, i.e.,\
$a \equiv_{\mathcal{P}} b$ if there is a sequence $c_0 = a, c_1, c_2, \ldots, c_m = b$ such
that $c_{i-1} \sim c_{i}$ and $c_i \in \mathcal{P}$. $\mathcal{P}$ is \emph{connected} if
$a \equiv_{\mathcal{P}} b$ for any $a, b \in \mathcal{P}$. Set $\mathcal{P}$ is naturally
decomposed into \emph{connected components}.

\begin{definition}
Let $G$ be any finite plane cubic graph. A proper subset $\mathcal{P} \subset \mathcal{F}_G$
is called a \emph{perforated patch} if it is connected.
\end{definition}

\begin{definition}
Let $G$ be any finite plane cubic graph. A proper subset $\mathcal{P} \subset \mathcal{F}_G$
is called a \emph{patch} if $\mathcal{P}$ is connected and
$\mathcal{P}^\complement = \mathcal{F}_G \setminus \mathcal{P}$ is also connected.
\end{definition}

\noindent Observe that finiteness of set $\mathcal{K}$ in Definitions~\ref{def:coronoid} and \ref{def:benzenoid}
implies that $\mathcal{K}$ is a proper subset of $\mathcal{H}$. In this finite version of those
two definitions we had to make that explicit.

Most observations of the previous section about coronoids and benzenoids are also true for their
corresponding generalisations, namely perforated patches and patches. In some cases the proof
remains essentially the same and in some cases it slightly simplifies, since we do not have to
deal with infinity anymore. We will transcribe results of the previous section into this new
language and omit the proofs. When appropriate, we will give some clarifications. For our
considerations, only the combinatorial data is relevant (adjacency between faces and the cyclic
ordering of edges incident to a common vertex). Geometric details of the drawing are unimportant.
Moreover, the outer face does not play a special role. The graph $G$ could also be embedded on
the sphere $S^2$, where all faces would be bounded. All the combinatorial information would,
of course, remain exactly the same.

\begin{lemma}
Let $G$ be any finite plane cubic graph and let $\emptyset \neq \mathcal{P} \subseteq \mathcal{F}_G$.
Let $a \in \mathcal{P}$ be any of its faces. Then $a$ belongs to some connected component
$\mathcal{C}_i$ of $\mathcal{P}$. Let $b \in N(a)$. Then either $b \in \mathcal{C}_i$
or $b \in \mathcal{P}^\complement$.
\hfill $\square$
\end{lemma}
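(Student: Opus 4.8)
The plan is to mirror the proof of Lemma~\ref{lemma:neighbours} almost verbatim, since the only structural change is that the ambient object is now a finite plane cubic graph $G$ rather than the infinite hexagonal grid $\mathcal{H}$, with the faces of $G$ playing the role formerly played by hexagons. First I would invoke the fact, already recorded in the preceding discussion, that $\equiv_{\mathcal{P}}$ is an equivalence relation on $\mathcal{P}$ and that $\mathcal{P}$ decomposes naturally into its connected components. This immediately yields the first assertion, namely that $a$ lies in a well-defined component $\mathcal{C}_i$.

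For the dichotomy I would argue by cases on whether $b \in \mathcal{P}$. Since $\mathcal{P}^\complement = \mathcal{F}_G \setminus \mathcal{P}$, every face of $G$ lies in exactly one of $\mathcal{P}$ or $\mathcal{P}^\complement$, so it suffices to show that if $b \in N(a) \cap \mathcal{P}$ then $b \in \mathcal{C}_i$. Because $b \in N(a)$, the faces $a$ and $b$ share an edge, i.e.\ $a \sim b$; and since both belong to $\mathcal{P}$, the two-term sequence $c_0 = a$, $c_1 = b$ witnesses $a \equiv_{\mathcal{P}} b$. Hence $b$ lies in the same equivalence class as $a$, that is $b \in \mathcal{C}_i$, which is exactly the desired conclusion. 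Its contrapositive is precisely the statement that $b \notin \mathcal{C}_i$ forces $b \in \mathcal{P}^\complement$.

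I expect essentially no substantive obstacle: the argument is a one-line consequence of the definition of $\equiv_{\mathcal{P}}$, just as in the hexagonal case. The only point deserving a moment's care is that, unlike in $\mathcal{H}$, a general cubic plane graph may have two faces sharing more than one edge, or a face incident with itself (as illustrated in Figure~\ref{fig:examplesOfPlaneGraphs}). However, since the adjacency relation $a \sim b$ only requires the existence of \emph{some} shared edge, none of these pathologies interferes with the implication $a \sim b \implies a \equiv_{\mathcal{P}} b$. I would therefore simply note that the proof of Lemma~\ref{lemma:neighbours} transcribes directly, replacing ``hexagon'' by ``face'' and $\mathcal{K}$ by $\mathcal{P}$, and is hence omitted in keeping with the stated convention of suppressing proofs that carry over unchanged.
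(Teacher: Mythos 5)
Your proposal is correct and matches the paper's treatment: the paper omits this proof precisely because it is the verbatim transcription of the proof of Lemma~2 (the hexagonal-grid version), which argues exactly as you do that a shared edge between two faces of $\mathcal{P}$ witnesses $a \equiv_{\mathcal{P}} b$ and hence membership in the same component. Your added remark that multiple shared edges or self-incident faces cause no difficulty is a sensible (if strictly unnecessary) precaution, fully consistent with the paper's stated convention of carrying the earlier proofs over unchanged.
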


\begin{lemma}
Let $G$ be any finite plane cubic graph and let $\emptyset \neq \mathcal{P} \subset \mathcal{F}_G$.
Let $\mathcal{C}_a$ be any connected component of its complement $\mathcal{P}^\complement$.
Then there exists a face $\tilde{a} \in \mathcal{C}_a$ that is adjacent to some face
in $\mathcal{P}$.
\hfill $\square$
\end{lemma}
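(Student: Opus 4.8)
The plan is to mirror the proof of Lemma~\ref{lemma:adjacencylemma}, with the single difference that the role played there by the identity $\bigcup_{k} \mathcal{P}_k = \mathcal{H}$ is now played by connectivity of the face--adjacency structure of $G$. Concretely, I first record the topological fact that, for any finite plane graph, any two faces are joined by a sequence of faces in which consecutive members share an edge; equivalently, the graph whose vertices are the elements of $\mathcal{F}_G$ and whose edges join faces sharing an edge is connected. To see this, pick interior points of the two given faces and join them by a generic arc meeting the $1$-skeleton of $G$ in only finitely many transversal crossings of edges, each away from a vertex (such an arc exists by a routine general-position argument, and the relevant separation is underwritten by the Jordan curve theorem \cite{thomassen1992}). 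Reading off the faces traversed by the arc, each edge-crossing passes between two faces incident with the crossed edge; discarding the crossings of bridges, at which the arc re-enters the same face, leaves a walk between the two chosen faces in the face--adjacency graph.

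Granting this, the argument is short. Fix any $p \in \mathcal{P}$ (possible since $\mathcal{P} \neq \emptyset$) and any $a \in \mathcal{C}_a$. By the connectivity just established there is a sequence $a = f_0, f_1, \ldots, f_k = p$ with $f_{i-1} \sim f_i$ for all $i$. Since $a \in \mathcal{C}_a \subseteq \mathcal{P}^\complement$ while $p \in \mathcal{P}$, and $\mathcal{P} \cap \mathcal{P}^\complement = \emptyset$, the sequence must leave $\mathcal{C}_a$: let $j$ be the least index with $f_j \notin \mathcal{C}_a$, so that $1 \leq j \leq k$ and $f_{j-1} \in \mathcal{C}_a$.

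It then remains to identify $\tilde{a} \coloneqq f_{j-1}$ as the desired face. By construction $\tilde{a} \in \mathcal{C}_a$ and $f_j \in N(\tilde{a})$. Suppose, for contradiction, that $f_j \in \mathcal{P}^\complement$. Then $f_j \sim \tilde{a}$ together with $\tilde{a} \in \mathcal{C}_a$ forces $f_j \equiv_{\mathcal{P}^\complement} \tilde{a}$, so $f_j$ lies in the same connected component $\mathcal{C}_a$ of $\mathcal{P}^\complement$, contradicting $f_j \notin \mathcal{C}_a$. Hence $f_j \in \mathcal{P}$, and $\tilde{a} \in \mathcal{C}_a$ is adjacent to the face $f_j \in \mathcal{P}$, which is exactly what was claimed.

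I expect the only genuine obstacle to be the first paragraph. On the infinite hexagonal grid the analogue of this connectivity was essentially free, because the faces exhaust the whole plane and the growing family $\{\mathcal{P}_k\}$ eventually swallows everything; for an arbitrary finite plane cubic graph one must instead argue honestly that ``sharing an edge'' connects all faces, and one must take care at bridges (present exactly when $G$ fails to be $2$-connected, as in Figure~\ref{sfig:toucesitself}), where an edge is incident with a single face and hence contributes no adjacency. Once connectivity is in hand, the remainder is a direct transcription of the finite part of the proof of Lemma~\ref{lemma:adjacencylemma}, and in fact simplifies, since no limiting argument over an infinite family is needed.
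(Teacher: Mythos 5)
Your proof is correct, and its skeleton is exactly the argument the paper intends: the paper omits the proof of this lemma, saying only that the finite-graph statements are proved as in the hexagonal case (or more simply), and your second and third paragraphs are precisely the ``first exit from $\mathcal{C}_a$'' argument of Lemma~\ref{lemma:adjacencylemma}, with a face path from $a$ to some $p \in \mathcal{P}$ playing the role of the growing family $\{\mathcal{P}_k\}_k$, and with the patch analogue of Lemma~\ref{lemma:neighbours} used to conclude that the first face outside $\mathcal{C}_a$ must lie in $\mathcal{P}$. The one place where you go genuinely beyond the paper is your first paragraph: connectivity of the face-adjacency structure of a finite plane graph. The paper never proves this; in the hexagonal setting it invokes $\bigcup_{n} \mathcal{P}_n = \mathcal{H}$ without proof, and it concedes as much in its closing remarks on infinite plane graphs (``we use the fact \dots without proving it''). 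Your transversal-arc argument, with the explicit care taken at bridges --- where an edge has the same face on both sides, so a crossing contributes no adjacency, exactly the situation of Figure~\ref{sfig:toucesitself} --- supplies that missing justification, so your write-up is, if anything, more complete than the paper's; the path-based versus neighbourhood-growing formulation of the first-exit step is only a cosmetic difference.
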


\begin{lemma}
\label{lemma:pathwaylemmas}
Let $G$ be any finite plane cubic graph and let $\mathcal{P}$ be a perforated patch. Let
$\mathcal{C}_a$ be a connected component of its complement $\mathcal{P}^\complement$,
$a \in \mathcal{C}_a$ and $p \in \mathcal{P}$. Then $a \equiv_{\mathcal{P} \cup \mathcal{C}_a} p$.
Let $\mathcal{C}_b$ be another connected complement of $\mathcal{P}^\complement$. Then
$a \equiv_{\mathcal{P} \cup \mathcal{C}_a \cup \mathcal{C}_b} b$.
\hfill $\square$
\end{lemma}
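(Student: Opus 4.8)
The plan is to transcribe verbatim the arguments of Lemma~\ref{lemma:pathlemma} and Corollary~\ref{cor:pathcor}, since every ingredient used there has an exact analogue in the finite patch setting. The three facts I would invoke are: the immediately preceding lemma (the finite analogue of Lemma~\ref{lemma:adjacencylemma}), which produces a boundary face of a complement component adjacent to $\mathcal{P}$; the connectedness of each component $\mathcal{C}_a$ and of $\mathcal{P}$ itself (the latter because $\mathcal{P}$ is a perforated patch); and the monotonicity of $\equiv$, namely that $\mathcal{Q} \subseteq \mathcal{R}$ together with $a \equiv_{\mathcal{Q}} b$ forces $a \equiv_{\mathcal{R}} b$, which holds here by the very same reasoning as in the coronoid section because $\equiv_{\mathcal{P}}$ is defined in the identical way.

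First I would prove the one-component claim $a \equiv_{\mathcal{P} \cup \mathcal{C}_a} p$. By the preceding adjacency lemma there is a face $\tilde{a} \in \mathcal{C}_a$ with $\tilde{a} \in N(k_a)$ for some $k_a \in \mathcal{P}$. Connectedness of $\mathcal{C}_a$ gives $a \equiv_{\mathcal{C}_a} \tilde{a}$, and monotonicity upgrades this to $a \equiv_{\mathcal{P} \cup \mathcal{C}_a} \tilde{a}$. Since $\tilde{a}$ and $k_a$ are adjacent and both lie in $\mathcal{P} \cup \mathcal{C}_a$, the definition of $\equiv$ yields $\tilde{a} \equiv_{\mathcal{P} \cup \mathcal{C}_a} k_a$. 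Finally, connectedness of the perforated patch $\mathcal{P}$ gives $k_a \equiv_{\mathcal{P}} p$, hence $k_a \equiv_{\mathcal{P} \cup \mathcal{C}_a} p$ by monotonicity. Chaining the three relations by transitivity of $\equiv$ produces $a \equiv_{\mathcal{P} \cup \mathcal{C}_a} p$.

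For the two-component claim I would apply the adjacency lemma a second time to obtain $\tilde{b} \in \mathcal{C}_b$ with $\tilde{b} \in N(k_b)$ for some $k_b \in \mathcal{P}$, and then assemble the chain $a \equiv \tilde{a} \equiv k_a \equiv k_b \equiv \tilde{b} \equiv b$, where $k_a \equiv_{\mathcal{P}} k_b$ uses connectedness of $\mathcal{P}$ and every link is lifted to the ground set $\mathcal{P} \cup \mathcal{C}_a \cup \mathcal{C}_b$ by monotonicity before transitivity is applied. (Equivalently, one may simply quote the already-proved first part twice, getting $a \equiv_{\mathcal{P} \cup \mathcal{C}_a} k_b$ and $b \equiv_{\mathcal{P} \cup \mathcal{C}_b} k_b$, and then merge the two ground sets.)

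There is no genuine obstacle here; the only point deserving care is bookkeeping of the ground sets, i.e. making sure each individual equivalence is first established over a small set and only then enlarged via monotonicity, so that transitivity is legitimately applied over a common ambient set. The adjacency lemma is precisely what guarantees the bridging face $\tilde{a}$ meets $\mathcal{P}$ rather than only other holes, which is why the first preceding lemma (the analogue of Lemma~\ref{lemma:neighbours}) plays no direct role beyond justifying that setup.
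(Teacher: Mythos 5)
Your proposal is correct and takes exactly the route the paper intends: the paper omits the proof of Lemma~\ref{lemma:pathwaylemmas}, remarking only that it corresponds to Lemma~\ref{lemma:pathlemma} and Corollary~\ref{cor:pathcor}, and your argument is precisely the transcription of those two proofs (the adjacency lemma supplies the bridging face $\tilde{a}$, connectedness of $\mathcal{C}_a$, $\mathcal{C}_b$ and $\mathcal{P}$ supplies the internal chains, and monotonicity plus transitivity of $\equiv$ glues them over the common ground set). Nothing further is needed.
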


The above Lemma~\ref{lemma:pathwaylemmas} corresponds to both Lemma~\ref{lemma:pathlemma} and
Corollary~\ref{cor:pathcor}.

\begin{lemma}
\label{lemma:reallytriviallemma}
Let $G$ be any finite plane cubic graph and let $\emptyset \neq \mathcal{P} \subseteq \mathcal{F}_G$.
Then the complement $\mathcal{P}^\complement$ of $\mathcal{P}$ consists of finitely many connected components:
$$
\mathcal{P}^\complement = \mathcal{C}_1 \sqcup \mathcal{C}_2 \sqcup \cdots \sqcup \mathcal{C}_d.
$$
Each of the components $\mathcal{C}_i$, $i \geq 1$, is a perforated patch. If $\mathcal{P}$
is a perforated patch, then each $\mathcal{C}_i$ is a patch.
\hfill $\square$
\end{lemma}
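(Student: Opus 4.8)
The plan is to mirror the proofs of Lemma~\ref{lemma:decomposition} and Theorem~\ref{thm:decompositionthm}, but with the considerable simplification that $\mathcal{F}_G$ is now finite, so there is no infinite exterior component to single out. First I would dispose of finiteness: since $G$ is finite, so is $\mathcal{F}_G$, hence $\mathcal{P}^\complement \subseteq \mathcal{F}_G$ is finite and decomposes into finitely many connected components $\mathcal{C}_1, \ldots, \mathcal{C}_d$. Each $\mathcal{C}_i$ is then automatically a perforated patch: a connected component is connected by construction, and it is a proper subset of $\mathcal{F}_G$ because $\mathcal{P} \neq \emptyset$ and $\mathcal{C}_i \subseteq \mathcal{P}^\complement$, so $\mathcal{C}_i$ avoids every face of $\mathcal{P}$.

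For the remaining claim, assume $\mathcal{P}$ is a perforated patch, i.e.\ connected. Since $\mathcal{C}_i$ is connected and a proper, non-empty subset, to show it is a patch it remains only to verify that its complement
$$
\mathcal{C}_i^\complement = \mathcal{P} \sqcup \bigsqcup_{j \neq i} \mathcal{C}_j
$$
is connected. Here I would invoke Lemma~\ref{lemma:pathwaylemmas}: for each $j \neq i$, each $a \in \mathcal{C}_j$ and each $p \in \mathcal{P}$, that lemma yields $a \equiv_{\mathcal{P} \cup \mathcal{C}_j} p$, and since $\mathcal{P} \cup \mathcal{C}_j \subseteq \mathcal{C}_i^\complement$ this upgrades to $a \equiv_{\mathcal{C}_i^\complement} p$. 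Together with the connectedness of $\mathcal{P}$ itself, this shows that every face of $\mathcal{C}_i^\complement$ is joined within $\mathcal{C}_i^\complement$ to every face of $\mathcal{P}$; consequently any two faces of $\mathcal{C}_i^\complement$ are mutually joined, so $\mathcal{C}_i^\complement$ is connected and $\mathcal{C}_i$ is a patch.

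The only step needing real care is the complement-connectedness in the final claim, and I expect Lemma~\ref{lemma:pathwaylemmas} to carry it exactly as Lemma~\ref{lemma:pathlemma} and Corollary~\ref{cor:pathcor} did inside Theorem~\ref{thm:decompositionthm}. No genuine obstacle should arise: the finiteness of $\mathcal{F}_G$ eliminates the subtlety of isolating the infinite component that made the coronoid version more delicate, and every other assertion is routine bookkeeping about connected components.
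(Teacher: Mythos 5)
Your proof is correct and follows exactly the route the paper intends: the paper omits the proof of Lemma~\ref{lemma:reallytriviallemma}, remarking that it corresponds to Lemma~\ref{lemma:decomposition} and Theorem~\ref{thm:decompositionthm} and becomes trivial by finiteness, and your argument is precisely that transcription, with Lemma~\ref{lemma:pathwaylemmas} playing the role of Lemma~\ref{lemma:pathlemma} and Corollary~\ref{cor:pathcor} in showing $\mathcal{C}_i^\complement = \mathcal{P} \sqcup \bigsqcup_{j \neq i} \mathcal{C}_j$ is connected. No gaps; the bookkeeping on properness and non-emptiness of the components is handled correctly.
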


\noindent The above Lemma~\ref{lemma:reallytriviallemma} corresponds to both Lemma~\ref{lemma:decomposition}
and Theorem~\ref{thm:decompositionthm}. Since we do not have to deal with an infinite
number of faces, the proof becomes trivial. The definition of the benzenoid closure
was natural. Here, one should be slightly more careful:

\begin{definition}
Let $\mathcal{P}$ be a perforated patch. The \emph{closure} of $\mathcal{P}$ with
respect to $p \in \mathcal{P}^\complement$, denoted $\mathit{Cl}(\mathcal{P}, p)$ is
the intersection of all those patches which include $\mathcal{P}$ as a subset and do
not contain $p$ among their faces, i.e.,
$$
\mathit{Cl}(\mathcal{P}, p) = \bigcap \{ \mathcal{Q} \mid \mathcal{Q}\text{ is patch} \land
\mathcal{P} \subseteq \mathcal{Q} \land p \notin \mathcal{Q} \}.
$$
\end{definition}

\noindent Let us investigate what happens if the extra condition, i.e.,\ exclusion of a designated
face $p$, is omitted. By Lemma~\ref{lemma:reallytriviallemma},
$\mathcal{P}^\complement = \mathcal{C}_1 \sqcup \mathcal{C}_2 \sqcup \cdots \sqcup \mathcal{C}_d$
where each $\mathcal{C}_i$ is a patch. Define
$\mathcal{P}_i \coloneqq \mathcal{P} \sqcup \mathcal{C}_1 \sqcup \ldots \sqcup \mathcal{C}_{i-1}
\sqcup \mathcal{C}_{i+1} \sqcup \cdots \sqcup \mathcal{C}_d$. It is easy to see that each
$\mathcal{P}_i$, $1 \leq i \leq d$, is a patch. Clearly, $\mathcal{P} \subseteq \mathcal{P}_i$
for each $i = 1, \ldots, d$. But $\bigcap\, \{\mathcal{P}_i \mid 1 \leq i \leq d \} = \mathcal{P}$.
Without that extra condition the definition would not make sense. In most cases this
``forbidden'' face $p$ can be chosen in advance and one can deal with only those perforated
patches which do not include face $p$. Then we can write $\mathit{Cl}(\mathcal{P})$ instead
of $\mathit{Cl}(\mathcal{P}, p)$ without introducing any ambiguity. The most natural candidate
for the forbidden face is, of course, the outer face. Let us denote
$\mathit{Pat}(\mathcal{P}, p) = \{ \mathcal{Q} \mid \mathcal{Q}\text{ is patch} \land
\mathcal{P} \subseteq \mathcal{Q} \land p \notin \mathcal{Q} \}$ for convenience.
Lemma~\ref{lemma:closure} and Proposition~\ref{prop:propertiesofcl} give rise to the
following analogue in the theory of perforated patches:

\begin{lemma}
Let $\mathcal{P}$ be a perforated patch and $p \in \mathcal{P}^\complement$ a face in its
complement. Let
$\mathcal{P}^\complement = \mathcal{C}_1 \sqcup \mathcal{C}_2 \sqcup \cdots \sqcup \mathcal{C}_d$
as in Lemma~\ref{lemma:reallytriviallemma}. Then the closure of $\mathcal{P}$ with respect to $p$ is
$$
\mathit{Cl}(\mathcal{P}, p) = \mathcal{P} \sqcup \mathcal{C}_1 \sqcup \cdots \sqcup
\mathcal{C}_{j-1} \sqcup \mathcal{C}_{j+1} \sqcup \cdots \sqcup \mathcal{C}_d,
$$
where $\mathcal{P}_j$ is the connected component of $\mathcal{P}^\complement$ which
includes $p$ among its faces, i.e.,\ $p \in \mathcal{C}_j$. Moreover, the closure with
respect to $p$ is an operation on the set of perforated patches without $p$ that
satisfies the following conditions:
\begin{enumerate}
\renewcommand{\labelenumi}{(\alph{enumi})}
\item $\mathcal{P} \subseteq \mathit{Cl}(\mathcal{P}, p)$,
\item $\mathcal{P} \subseteq \mathcal{Q} \implies \mathit{Cl}(\mathcal{P}, p)
\subseteq \mathit{Cl}(\mathcal{Q}, p)$, and
\item $\mathit{Cl}(\mathit{Cl}(\mathcal{P}, p), p) = \mathit{Cl}(\mathcal{P}, p)$.
\end{enumerate}
\hfill $\square$
\end{lemma}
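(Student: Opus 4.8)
The plan is to mirror the combined arguments of Lemma~\ref{lemma:closure} and Proposition~\ref{prop:propertiesofcl}, with the component $\mathcal{C}_j$ containing the forbidden face $p$ playing exactly the role that the infinite component $\mathcal{B}_\infty$ played in the benzenoid setting. Write $\mathcal{R} := \mathcal{F}_G \setminus \mathcal{C}_j = \mathcal{P} \sqcup \bigsqcup_{i \neq j} \mathcal{C}_i$; the goal of the first half is to prove $\mathit{Cl}(\mathcal{P}, p) = \mathcal{R}$ by two inclusions, and the three closure properties then follow formally.

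First I would verify that $\mathcal{R} \in \mathit{Pat}(\mathcal{P}, p)$. Its complement is $\mathcal{C}_j$, which is connected because it is a connected component of $\mathcal{P}^\complement$; and $\mathcal{R}$ itself is connected since, by Lemma~\ref{lemma:pathwaylemmas}, every $\mathcal{C}_i$ with $i \neq j$ is joined to $\mathcal{P}$ by a pathway of faces lying in $\mathcal{P} \cup \mathcal{C}_i \subseteq \mathcal{R}$. As $\mathcal{C}_j \ni p$ is nonempty, $\mathcal{R}$ is a proper subset, hence a patch containing $\mathcal{P}$ and avoiding $p$; therefore $\mathit{Cl}(\mathcal{P}, p) = \bigcap \mathit{Pat}(\mathcal{P}, p) \subseteq \mathcal{R}$. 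For the reverse inclusion, let $\mathcal{Q} \in \mathit{Pat}(\mathcal{P}, p)$ be arbitrary. From $\mathcal{P} \subseteq \mathcal{Q}$ we get $\mathcal{Q}^\complement \subseteq \mathcal{P}^\complement$, and since $\mathcal{Q}$ is a patch, $\mathcal{Q}^\complement$ is connected; by the evident analogue (for perforated patches) of the first lemma of the previous section, a connected subcollection contained in $\mathcal{P}^\complement$ lies inside a single component, so $\mathcal{Q}^\complement \subseteq \mathcal{C}_i$ for one $i$. Because $p \notin \mathcal{Q}$ forces $p \in \mathcal{Q}^\complement$ and $p \in \mathcal{C}_j$, that component is $\mathcal{C}_j$, whence $\mathcal{Q}^\complement \subseteq \mathcal{C}_j$ and $\mathcal{R} = \mathcal{F}_G \setminus \mathcal{C}_j \subseteq \mathcal{F}_G \setminus \mathcal{Q}^\complement = \mathcal{Q}$. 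Intersecting over all such $\mathcal{Q}$ gives $\mathcal{R} \subseteq \mathit{Cl}(\mathcal{P}, p)$, and the formula follows.

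The three closure properties then come almost for free, exactly as in Proposition~\ref{prop:propertiesofcl}. Property (a) is immediate, since every member of $\mathit{Pat}(\mathcal{P}, p)$ contains $\mathcal{P}$, and so does their intersection (equivalently, it is visible from the formula). For (b), if $\mathcal{P} \subseteq \mathcal{Q}$ then every patch containing $\mathcal{Q}$ and omitting $p$ also contains $\mathcal{P}$ and omits $p$, so $\mathit{Pat}(\mathcal{Q}, p) \subseteq \mathit{Pat}(\mathcal{P}, p)$; intersecting reverses the inclusion and yields $\mathit{Cl}(\mathcal{P}, p) \subseteq \mathit{Cl}(\mathcal{Q}, p)$. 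For (c), set $\mathcal{S} := \mathit{Cl}(\mathcal{P}, p)$; the first half shows $\mathcal{S}$ is itself a patch avoiding $p$, so $\mathcal{S} \in \mathit{Pat}(\mathcal{S}, p)$, giving $\mathit{Cl}(\mathcal{S}, p) \subseteq \mathcal{S}$, while (a) applied to $\mathcal{S}$ gives the opposite inclusion.

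The only point requiring care --- rather than a genuine obstacle --- is the bookkeeping around the designated face $p$. In the benzenoid case the forbidden component was singled out intrinsically (it is the unique infinite component), whereas here every component is finite and a priori interchangeable, so one must check throughout that $\mathcal{C}_j$ is well-defined as the component containing $p$ and that $\mathit{Pat}(\mathcal{P}, p)$ is nonempty (guaranteed because $\mathcal{R}$ belongs to it) before any intersection argument is legitimate. One should also confirm that $\mathcal{S}$ qualifies as a perforated patch without $p$, so that $\mathit{Cl}(\mathcal{S}, p)$ is defined at all; this holds because every patch is in particular a connected proper subcollection, i.e.\ a perforated patch.
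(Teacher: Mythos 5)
Your proof is correct and follows exactly the route the paper intends: the paper omits the proof of this lemma precisely because it is the transcription of Lemma~\ref{lemma:closure} and Proposition~\ref{prop:propertiesofcl} into the patch setting, with the component $\mathcal{C}_j$ containing the forbidden face $p$ playing the role of the infinite component $\mathcal{B}_\infty$, which is what you do. Both halves (the two-inclusion identification of $\mathit{Cl}(\mathcal{P},p)$ with $\mathcal{F}_G \setminus \mathcal{C}_j$ via Lemma~\ref{lemma:pathwaylemmas}, and the formal derivation of (a)--(c) from the $\mathit{Pat}(\cdot,p)$ intersection) match the paper's earlier arguments step for step.
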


\noindent The following lemma is an analogue to Definition~\ref{def:alternativeclosure} and
Lemma~\ref{lem:alternativeclop} from the theory of coronoid hydrocarbons:

\begin{lemma}
Let $\mathcal{P}$ be a perforated patch such that $p \notin \mathcal{P}$. Let
$\widetilde{\mathcal{P}} \subseteq \mathcal{F}_G$ satisfy the following conditions:
\begin{enumerate}
\renewcommand{\labelenumi}{(\roman{enumi})}
\item $\widetilde{\mathcal{P}}$ is a patch without $p$,
\item $\mathcal{P} \subseteq \widetilde{\mathcal{P}}$, and
\item $\mathcal{P} \subseteq \mathcal{Q} \implies \widetilde{\mathcal{P}} \subseteq
\mathcal{Q}$ for every patch $\mathcal{Q}$ without $p$.
\end{enumerate}
There exists a unique $\widetilde{\mathcal{P}}$ with given properties and
$\widetilde{\mathcal{P}} = \mathit{Cl}(\mathcal{P}, p)$.
\hfill $\square$
\end{lemma}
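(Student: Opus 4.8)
The plan is to mirror the proof of Lemma~\ref{lem:alternativeclop} essentially verbatim, with $\mathit{Cl}(\mathcal{P}, p)$ playing the role of $\overline{\mathcal{K}}$ and the family $\mathit{Pat}(\mathcal{P}, p)$ playing the role of $\mathit{Benz}(\mathcal{K})$. The argument separates into an existence part (exhibiting one set satisfying (i)--(iii)) and a uniqueness part (showing any such set coincides with it). In both parts the only tools needed are the defining equality $\mathit{Cl}(\mathcal{P}, p) = \bigcap \mathit{Pat}(\mathcal{P}, p)$ and the preceding lemma (the analogue of Lemma~\ref{lemma:closure} and Proposition~\ref{prop:propertiesofcl}).

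For existence, I would verify that $\mathit{Cl}(\mathcal{P}, p)$ itself meets all three conditions. Condition (i) holds because the preceding lemma presents $\mathit{Cl}(\mathcal{P}, p)$ as the patch $\mathcal{P} \sqcup \mathcal{C}_1 \sqcup \cdots \sqcup \mathcal{C}_{j-1} \sqcup \mathcal{C}_{j+1} \sqcup \cdots \sqcup \mathcal{C}_d$, which omits the component $\mathcal{C}_j \ni p$ and is therefore a patch without $p$. Condition (ii) is exactly property (a) of that lemma, namely $\mathcal{P} \subseteq \mathit{Cl}(\mathcal{P}, p)$. For condition (iii), let $\mathcal{Q}$ be any patch without $p$ with $\mathcal{P} \subseteq \mathcal{Q}$; then $\mathcal{Q} \in \mathit{Pat}(\mathcal{P}, p)$ by definition, so $\mathit{Cl}(\mathcal{P}, p) = \bigcap \mathit{Pat}(\mathcal{P}, p) \subseteq \mathcal{Q}$. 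In particular $\mathit{Pat}(\mathcal{P}, p)$ is non-empty, since $\mathit{Cl}(\mathcal{P}, p)$ is one of its members, so the intersection is well defined.

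For uniqueness, I would take an arbitrary $\widetilde{\mathcal{P}}$ satisfying (i)--(iii) and prove two inclusions. Applying condition (iii) of $\widetilde{\mathcal{P}}$ to each $\mathcal{Q} \in \mathit{Pat}(\mathcal{P}, p)$ gives $\widetilde{\mathcal{P}} \subseteq \mathcal{Q}$ for every such $\mathcal{Q}$, hence $\widetilde{\mathcal{P}} \subseteq \bigcap \mathit{Pat}(\mathcal{P}, p) = \mathit{Cl}(\mathcal{P}, p)$. Conversely, conditions (i) and (ii) say that $\widetilde{\mathcal{P}}$ is a patch without $p$ containing $\mathcal{P}$, i.e.\ $\widetilde{\mathcal{P}} \in \mathit{Pat}(\mathcal{P}, p)$, so $\mathit{Cl}(\mathcal{P}, p) = \bigcap \mathit{Pat}(\mathcal{P}, p) \subseteq \widetilde{\mathcal{P}}$. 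The two inclusions yield $\widetilde{\mathcal{P}} = \mathit{Cl}(\mathcal{P}, p)$, which proves both uniqueness and the identification with the closure.

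I expect no serious obstacle: the content is purely formal once the preceding lemma is in hand. The only point deserving attention --- and the one genuine difference from the benzenoid case --- is the consistent bookkeeping of the forbidden face $p$. Unlike the infinite component $\mathcal{B}_\infty$, which is singled out automatically, here $p$ must be designated by hand, and one must confirm that $\mathit{Cl}(\mathcal{P}, p)$ really lies in the class of patches excluding $p$ so that it qualifies as a member of $\mathit{Pat}(\mathcal{P}, p)$. This is precisely what the explicit decomposition in the preceding lemma supplies, so the remaining work is the elementary double-inclusion argument sketched above.
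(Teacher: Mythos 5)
Your proposal is correct and matches the paper's intended argument: the paper omits the proof of this lemma precisely because it is the verbatim transcription of the double-inclusion proof of Lemma~\ref{lem:alternativeclop}, with $\mathit{Pat}(\mathcal{P}, p)$ replacing $\mathit{Benz}(\mathcal{K})$, which is exactly what you carry out. Your added remark that the preceding lemma's explicit decomposition guarantees $\mathit{Cl}(\mathcal{P}, p)$ is itself a patch avoiding $p$ (so that $\mathit{Pat}(\mathcal{P}, p)$ is nonempty and the intersection is meaningful) is the one point where the patch setting differs from the benzenoid one, and you handle it correctly.
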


\begin{figure}[!ht]
\centering
\input{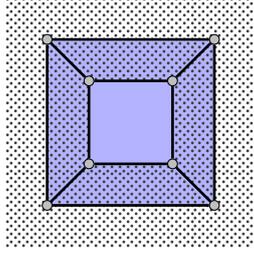}
\caption{The intersection of two patches may be a perforated patch.}
\label{fig:intersectofpatches}
\end{figure}

\noindent Some caution should be used in making the analogy with Lemma~\ref{lemma:intersectionofbenzenoids}:
as seen in Figure~\ref{fig:intersectofpatches}, the intersection of two patches may be
a perforated patch. However, the following is true:

\begin{lemma}
Let $\mathcal{P}$ and $\mathcal{Q}$ be two patches such that their complements share a face,
i.e., $\mathcal{P}^\complement \cap \mathcal{Q}^\complement \neq \emptyset$. Then the
intersection of patches $\mathcal{P}$ and $\mathcal{Q}$ is disjoint union of patches.
\hfill $\square$ 
\end{lemma}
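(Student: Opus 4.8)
The plan is to imitate the proof of Lemma~\ref{lemma:intersectionofbenzenoids}, with the component of the complement containing the shared face playing the role that the unique infinite component $\mathcal{C}_\infty$ played in the benzenoid setting. First I would fix a face $p \in \mathcal{P}^\complement \cap \mathcal{Q}^\complement$, which exists by hypothesis, and set $\mathcal{R} \coloneqq \mathcal{P} \cap \mathcal{Q}$. Since $p \notin \mathcal{P}$ we have $p \notin \mathcal{R}$, so $\mathcal{R}$ is a proper subset of $\mathcal{F}_G$; I decompose it into its (finitely many, possibly zero) connected components $\mathcal{R} = \mathcal{K}_1 \sqcup \cdots \sqcup \mathcal{K}_d$. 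Each $\mathcal{K}_i$ is connected by construction and is a proper subset because $p \notin \mathcal{K}_i$. It then remains only to show that every complement $\mathcal{K}_i^\complement$ is connected; once that is done, each $\mathcal{K}_i$ is a patch and the lemma follows.

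Fixing $i$, I apply Lemma~\ref{lemma:reallytriviallemma} to write $\mathcal{K}_i^\complement = \mathcal{C}_1 \sqcup \cdots \sqcup \mathcal{C}_m$ with each $\mathcal{C}_j$ a perforated patch. From $\mathcal{K}_i \subseteq \mathcal{R} \subseteq \mathcal{P}$ I obtain $\mathcal{P}^\complement \subseteq \mathcal{K}_i^\complement$, and since $\mathcal{P}^\complement$ is connected it must lie inside a single component, which I name $\mathcal{C}_\star$. Because $p \in \mathcal{P}^\complement$, we have $p \in \mathcal{C}_\star$. The same reasoning applied to $\mathcal{Q}$ places $\mathcal{Q}^\complement$ inside one component, and since $p \in \mathcal{Q}^\complement$ as well, that component must again be $\mathcal{C}_\star$. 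This is the single place where the hypothesis $\mathcal{P}^\complement \cap \mathcal{Q}^\complement \neq \emptyset$ enters: the common face $p$ pins both complements to the \emph{same} component.

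Next I argue by contradiction that $m = 1$. Suppose $m \geq 2$ and pick a component $\mathcal{C}_1 \neq \mathcal{C}_\star$. By the patch analogue of Lemma~\ref{lemma:adjacencylemma}, there is a face $h \in \mathcal{C}_1$ adjacent to some $k \in \mathcal{K}_i$. I claim $h \notin \mathcal{R}$: otherwise $h$ would lie in some component $\mathcal{K}_j$, and $h \in \mathcal{C}_1 \subseteq \mathcal{K}_i^\complement$ forces $j \neq i$, so the adjacency of $h$ to $k \in \mathcal{K}_i$ would contradict the patch analogue of Lemma~\ref{lemma:neighbours}. Hence $h \notin \mathcal{P}$ or $h \notin \mathcal{Q}$, i.e. $h \in \mathcal{P}^\complement$ or $h \in \mathcal{Q}^\complement$. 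In either case $h \in \mathcal{C}_\star$, contradicting $h \in \mathcal{C}_1 \neq \mathcal{C}_\star$. Therefore $m = 1$, so $\mathcal{K}_i^\complement = \mathcal{C}_\star$ is connected and each $\mathcal{K}_i$ is a patch.

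The computation is essentially routine once the right substitution ($\mathcal{C}_\infty \rightsquigarrow$ the component containing $p$) is identified. The step I would watch most carefully is precisely the use of the shared face to force $\mathcal{P}^\complement$ and $\mathcal{Q}^\complement$ into a common component: without it, as Figure~\ref{fig:intersectofpatches} illustrates, the two complements may fall into different components of $\mathcal{K}_i^\complement$, the contradiction collapses, and the intersection can genuinely be only a perforated patch.
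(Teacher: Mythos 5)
Your proof is correct and is essentially the proof the paper intends: this lemma is stated without proof precisely because it is the transcription of Lemma~\ref{lemma:intersectionofbenzenoids} into the patch setting, and your argument is exactly that transcription, with the component of $\mathcal{K}_i^\complement$ containing the shared face $p$ playing the role that the infinite component $\mathcal{C}_\infty$ played there. You also correctly isolate the one genuinely new point---$\mathcal{P}^\complement$ and $\mathcal{Q}^\complement$ are forced into the \emph{same} component of $\mathcal{K}_i^\complement$ only because they share $p$---which is exactly where the hypothesis enters and why, as Figure~\ref{fig:intersectofpatches} shows, the conclusion fails without it.
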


\noindent To obtain a perforated patch we choose a set of faces of the plane cubic graph $G$ that
constitute a connected region. Sometimes we talk about ``removing faces''. This means
that we select members of the complement, i.e., faces that will not be present in the
perforated patch.

\noindent Given a perforated patch as a plane graph it is not possible to detect (in the general case)
which faces are corona holes and which not. An example is given in Figure~\ref{fig:cantdetect}.
Therefore, plane graphs do not give a sufficient model for perforated patches. One has to
indicate which faces are actually present, and which are not. This means that
Lemma~\ref{lemma:thereisanondeg} has no equivalent in this theory.

\begin{figure}[!ht]
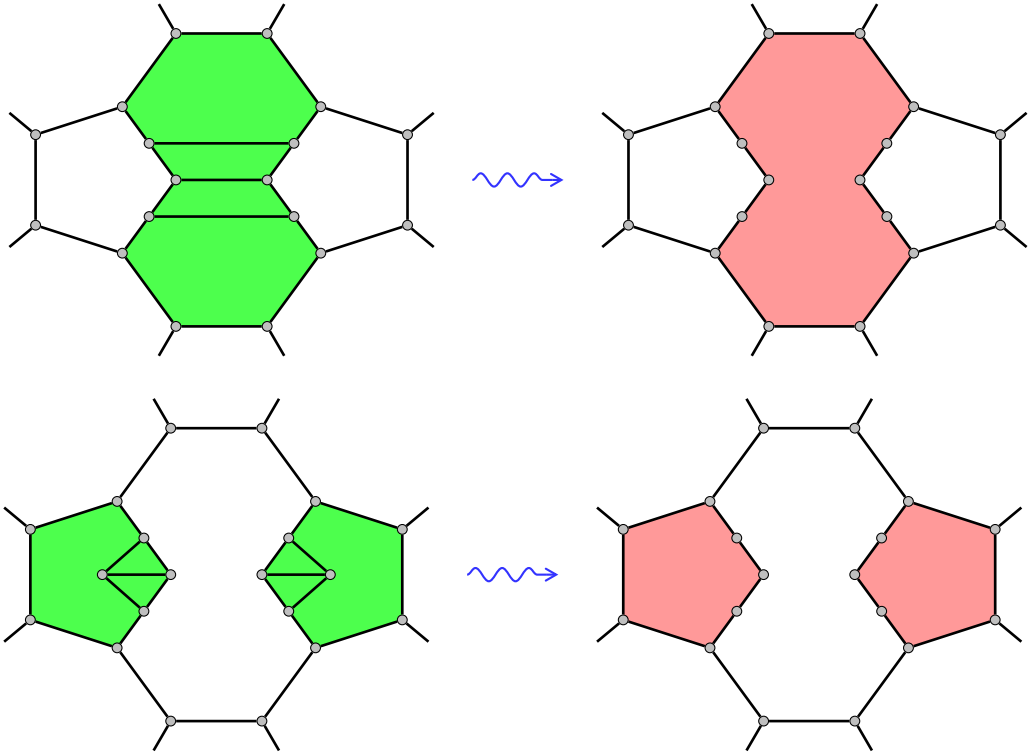

\centering
\input{img/ambigious01.tikz}
\\[0.5cm]
\input{img/ambigious02.tikz}
\caption{Two distinct perforated patches with the same skeleton. The faces that we removed,
i.e.,\ the faces in the complement, are indicated by shading 
and the corresponding hole is also shaded.} 
\label{fig:cantdetect}
\end{figure}

A patch is called \emph{$k$-connected} when its skeleton is $k$-connected. A face which is
incident to an edge from both sides is called an \emph{ill-behaved} face. The following proposition
precisely characterizes 2-connected patches:

\begin{proposition}
A patch $\mathcal{P}$ is 2-connected if and only if it contains no ill-behaved faces.
\end{proposition}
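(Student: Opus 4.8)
The plan is to route the biconditional through the notion of a \emph{bridge} (cut edge) of the skeleton $G(\mathcal{P})$, splitting the argument into two separate equivalences: first, that ill-behaved faces are exactly the faces that witness bridges of $G(\mathcal{P})$; and second, that for a graph whose vertices all have degree $2$ or $3$, being $2$-connected is the same as having no bridge. Chaining these gives the claim.

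First I would set up the bridge characterisation of ill-behaved faces, relying on the standard planar fact that an edge of a connected plane graph is a bridge if and only if the same face lies on both of its sides (equivalently, the edge lies on no cycle). I would note that $G(\mathcal{P})$ inherits its embedding from $G$, that it is connected by the patch analogue of the connectedness statement proved earlier for coronoid graphs, and crucially that every $f \in \mathcal{P}$ is genuinely a face of $G(\mathcal{P})$: all of its bounding edges are incident with $f$, hence retained, so $f$ is an entire component of $\mathbb{R}^2 \setminus G(\mathcal{P})$. Now take any edge $e$ of $G(\mathcal{P})$; it is incident with at least one face of $\mathcal{P}$. If its two sides in $G$ are distinct faces, then either both lie in $\mathcal{P}$ (so $e$ separates two distinct faces of $G(\mathcal{P})$), or one lies in $\mathcal{P}^\complement$, making $e$ a boundary edge whose complement-side region is a face of $G(\mathcal{P})$ different from the retained face on the other side. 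In either case $e$ borders two distinct faces and is not a bridge. Therefore $e$ is a bridge of $G(\mathcal{P})$ precisely when both sides of $e$ in $G$ are the same face, which then lies in $\mathcal{P}$ and is ill-behaved; so $\mathcal{P}$ contains an ill-behaved face if and only if $G(\mathcal{P})$ has a bridge.

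Second I would prove, for the skeleton specifically, that $2$-connectivity is equivalent to the absence of bridges, where the degree restriction does the work. If $e = uv$ is a bridge, then since neither endpoint is a leaf (minimum degree $2$), each endpoint separates its side of $e$ from its other neighbours, so $u$ is a cut vertex and $G(\mathcal{P})$ is not $2$-connected. Conversely, if $G(\mathcal{P})$ has a cut vertex $v$, I would examine how the $\deg(v) \le 3$ edges at $v$ distribute among the $\ge 2$ components of $G(\mathcal{P}) - v$, each of which must meet $v$; were every component joined by at least two edges we would need at least four edges at $v$, so some component is joined by a single edge, which is then a bridge. Hence, for the connected skeleton, having no bridge is equivalent to having no cut vertex, i.e.\ to being $2$-connected.

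Combining the two equivalences proves the proposition. The one genuinely delicate point is the face bookkeeping in the first step, namely confirming that after the complement faces merge into larger regions a boundary edge still cannot have the same face of $G(\mathcal{P})$ on both sides; I would dispose of this exactly by the enclosure observation above, that each retained face $f \in \mathcal{P}$ is its own face of $G(\mathcal{P})$ and so can never be identified with the region on the complement side of any of its edges.
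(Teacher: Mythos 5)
Your proof is correct, and it takes a genuinely different route from the paper's on the substantive direction. The easy direction is shared: an ill-behaved face yields a bridge, hence a cut vertex. For the converse the paper argues constructively: it starts from a single face, whose boundary is a cycle (this uses cubicity together with the absence of ill-behaved faces), then adds adjacent faces of $\mathcal{P}$ one by one, the newly appearing edges forming paths attached to the current graph --- i.e.\ it builds an ear decomposition of $G(\mathcal{P})$. You instead factor the proposition through two equivalences: (A) bridges of $G(\mathcal{P})$ are exactly the edges witnessing ill-behaved faces, via the planar characterisation of bridges plus your enclosure observation that every $f \in \mathcal{P}$ survives as a face of $G(\mathcal{P})$ --- a point the paper never needs, since it only uses the easy half of (A); and (B) for a connected graph with all degrees in $\{2,3\}$, bridgeless is the same as $2$-connected, the bound $\deg \le 3$ being precisely what forces a cut vertex to send a single edge to some component of its removal. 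What each buys: the ear decomposition is constructive and establishes connectedness of the skeleton along the way (and incidentally shows boundaries of well-behaved faces are cycles), while your argument is more structural, needs nothing about boundary walks, but must import connectedness of $G(\mathcal{P})$ from the quoted coronoid analogue. One caveat, which applies equally to the paper: that imported connectedness (and the paper's claim that the new edges attach to the existing graph) tacitly assumes the ambient cubic graph $G$ is connected, so that face boundaries of $G$ are connected. If $G$ may be disconnected --- e.g.\ two nested plane copies of $K_4$, with $\mathcal{P}$ consisting of the annular face together with the three inner faces of the inner copy --- one obtains a patch with no ill-behaved faces whose skeleton is disconnected, so the proposition itself needs this standing hypothesis. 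Since the paper makes the same tacit assumption, this is not a defect of your proof relative to the paper's.
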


\begin{proof}
An ill-behaved face means there is a bridge (cut-edge) so it is not 2-connected.
If there are no ill-behaved faces: start with a single face. Its skeleton is a cycle which
is 2-connected. Then iteratively add adjacent faces. The newly added edges form one or more
paths that are connected to the existing graph. In this way we construct ear-decomposition.
\end{proof}

\noindent It is possible to generalise this theory for the case when the cubic planar graph $G$ is
infinite. The standard embedding of the hexagonal lattice is such an example. If one tries
to use an arbitrary infinite cubic graph, problems of topological nature may arise. Some
remarks must therefore be made. Hexagonal grid, as we will see, has ceirtain nice properties.
In the proof of Lemma~\ref{lemma:adjacencylemma} we use the fact that
$\cup_{n=0}^{\infty} \mathcal{P}_n = \mathcal{H}$ without proving it. This holds when
there exists a finite pathway of faces between any two faces of the plane graph. With other
words, the distance between every two vertices of the dual graph is finite. Hexagonal grid
is an example. If this was not the case, the proof of Lemma~\ref{lemma:adjacencylemma} would
fail. Another important building block of this theory is Lemma~\ref{lemma:decomposition}.
In its proof, when we claim that $\mathcal{P}^\complement$ is connected, we implicitly use
the Jordan curve theorem. This renowned theorem seems obvious at the first sight, but its
proof happens to be involved. Another sensitive part of the proof is when we claim that
$\mathcal{P}_{n}$ contains finitely many faces. A nifty topologist could construct such an
example where this would fail. Luckily, every bounded region of the infinite hexgonal grid
contains finitely many faces. This is another condition on the infinite plane graph $G$ that
has to be met.

\section{Altans, Generalised Altans and Iterated Altans}
We will start by making a small extension of the definition of an altan as presented previously
\cite{basic2015}. Let $G$ be a graph and let $C$, \emph{the perimeter}, be a cycle in $G$ having
$k \geq 2$ vertices of degree 2. Then $A(G, C)$ will be the altan as defined in \cite{basic2015}
with respect to the degree-2 vertices of $C$. Those edges which connect degree-3 vertices on the
new cycle with $C$ will be called \emph{spokes}. 

\begin{example}
See Figure~\ref{fig:fig5}.

\begin{figure}[!h]
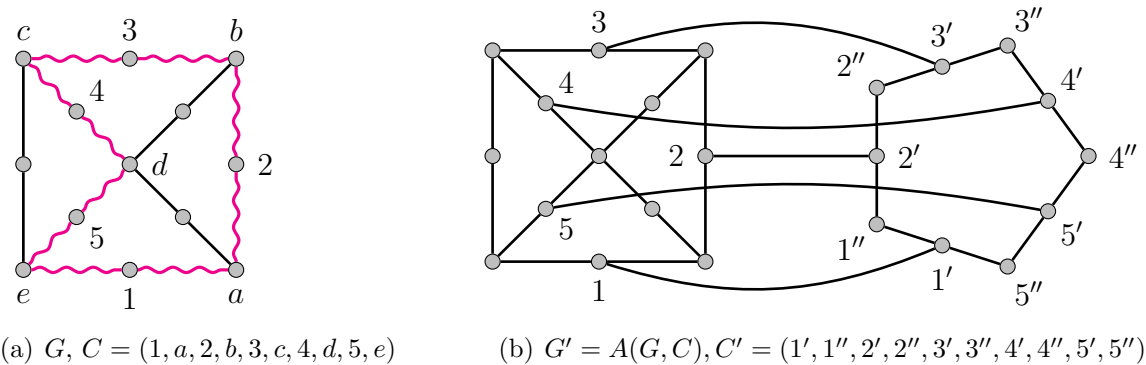

\centering
\subfigure[$G$, $C = (1, a, 2, b, 3, c, 4, d, 5, e)$]{
\input{img/figure_5.tikz} \quad\quad\quad
}
\qquad
\subfigure[$G' = A(G, C), C' = (1', 1'', 2', 2'', 3', 3'', 4', 4'', 5', 5'')$]{
\input{img/figure_5a.tikz}
}
\caption{A graph $G$ with a designated perimeter $C$ (on the left), and its altan $A(G, C)$ (on the right).}
\label{fig:fig5}
\end{figure}
\end{example}

A generalised altan is obtained by selecting a collection of cycles $C_1, C_2, \ldots, C_k$
in $G$ with the property that any degree-2 vertex of $G$ appears on at most one cycle
and that each of the cycles $C_i$ contains at least 2 vertices of degree 2. We call
$(G; C_1, \ldots, C_k)$ an \emph{admissible structure}. In addition we select a non-empty
subset of indices $J \subseteq \{1, 2, \ldots, k\}$ and perform the altan operation on all
cycles $C_j$, $j \in J$. We define
$$
A(G; C_1, \ldots, C_k; \emptyset) = (G; C_1, C_2, \ldots, C_k),
$$
and
$$
A(G; C_1, \ldots, C_k; J) = A(G'; C_1, \ldots, C_{j-1}, C_j', C_{j+1}, \ldots, C_k;  J \setminus j),
$$
for $J \neq \emptyset$, where $j = \min J$. Graph $G'$ is obtained from $G$ by adding a new
copy, denoted $C_{j}'$, of a cycle on $2d$ vertices, where $d$ is the number of degree-2
vertices on cycle $C_j$. Every second vertex on cycle $C_{j}'$ is attached to a degree-2
vertex on $C_{j}$. Usually we take either $|J| = 1$ or $|J| = k$. In the former case we are
dealing with the ordinary altan operation. In the latter case all perimeters are used. 

\begin{example}
\label{example:e34}
Let $G$ be the subdivided cube in Figure~\ref{fig:example34}.
Let $C_1 = (1, 2, 3, 4, 7, 6, 5)$, $C_2 = (1, 5, 9, 10, 14, 8)$ and $C_3 = (10, 11, 12, 13, 16, 15, 14)$.
\begin{figure}[!hb]
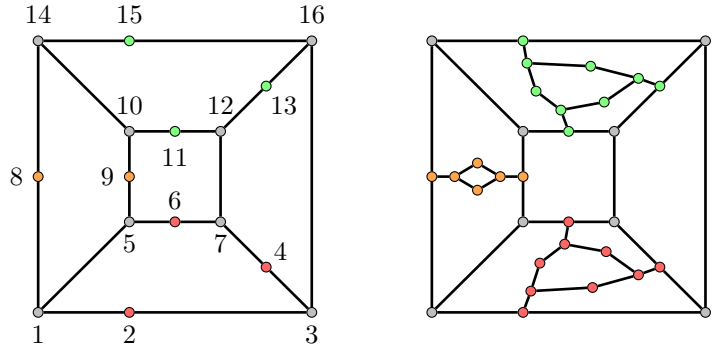

\centering
\input{img/example34.tikz}
\qquad
\input{img/example34b.tikz}
\caption{A subdivided cube (on left) and its generalised altan (on right).}
\label{fig:example34}
\end{figure}
Then $(G; C_1, C_2, C_3)$ is an admissible structure. Note that cycles $C_1, \ldots, C_k$
of an admissible structure need not be disjoint as long as no degree-2 vertex lies on
a shared part. The generalised altan $A(G;C_1, C_2, C_3; \{1, 2, 3\})$ is on the right
in Figure~\ref{fig:example34}.
\end{example}

We may apply the generalised altan operation iteratively. The order in which we apply individual
``local'' altan operations is irrelevant.

\subsection{Iterated generalised altans}
Let $(G; C_1, C_2, \ldots, C_k)$ be an admissible structure. Let
$\mathbf{n} = (n_1, n_2, \ldots, n_k)$ be an integer vector with $n_i \geq 0$.
Then
$$
A^\mathbf{n}(G; C_1, \ldots, C_k)
$$
denotes the generalised iterated altan. Let
$\mathbf{n^{\dagger}} = (n_1^{\dagger}, n_2^{\dagger}, \ldots, n_k^{\dagger})$ where
$$
n_i^{\dagger} = \begin{cases}
n_i - 1, & \text{if }n_i > 0\\
0, & \text{otherwise.}
\end{cases}
$$
Moreover, let $J_{\mathbf{n}}^\dagger = \{ i \mid n_i > 0 \}$. Then iterated generalised
altan can be defined in terms of the generalised altan in the following way
$$
A^\mathbf{n}(G; C_1, \ldots, C_k) = A^\mathbf{n^{\dagger}}(A(G; C_1, \ldots, C_k; J_{\mathbf{n}}^\dagger)).
$$
Naturally, $A^{\mathbf{0}}(G; C_1, \dots, C_k) = (G; C_1, \ldots, C_k)$ where
$\mathbf{0} = (0, 0, \ldots, 0)$.

\begin{example}
\label{example:e35}
Consider admissible structure $(G; C_1, C_2, C_3)$ from Example~\ref{example:e34}.
Iterated generalised altan $A^{(1, 0, 2)}(G; C_1, C_2, C_3)$ is shown in
Figure~\ref{fig:example35}.
\begin{figure}[!hb]
\centering
\input{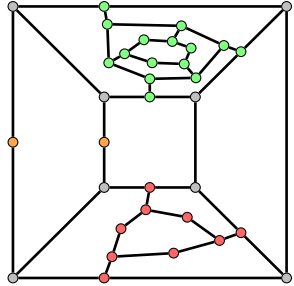}
\caption{The iterated generalised altan $A^{(1, 0, 2)}(G; C_1, C_2, C_3)$.}
\label{fig:example35}
\end{figure}
\end{example}

\subsection{Altans of coronoids and perforated patches}
From now on, by a coronoid we mean a non-degenerate coronoid. Each coronoid $\mathcal{K}$
with its perimeters is an admissible generalised altan structure. Hence,
$A^\mathbf{n}(\mathcal{K})$ is well-defined as soon as we label its perimeters. Note that
the cycles are exactly perimeters and they are disjoint. When $\mathbf{n} \neq \mathbf{0}$,
we call $A^{\mathbf{n}}(\mathcal{K})$ a \emph{proper generalised altan}.

\begin{example}
Let $\mathcal{K}$ be the coronoid in Figure~\ref{fig:example35} (the part consisting of
dotted 
faces). Let $C_1$ denote the left inner perimeter, $C_2$ the right inner perimeter and
$C_3$ the outer perimeter. Generalised altan $A^{(2, 3, 0)}(\mathcal{K})$ consists of
shaded 
and dotted 
faces in Figure~\ref{fig:example36}.
\begin{figure}[!htb]
\centering
\input{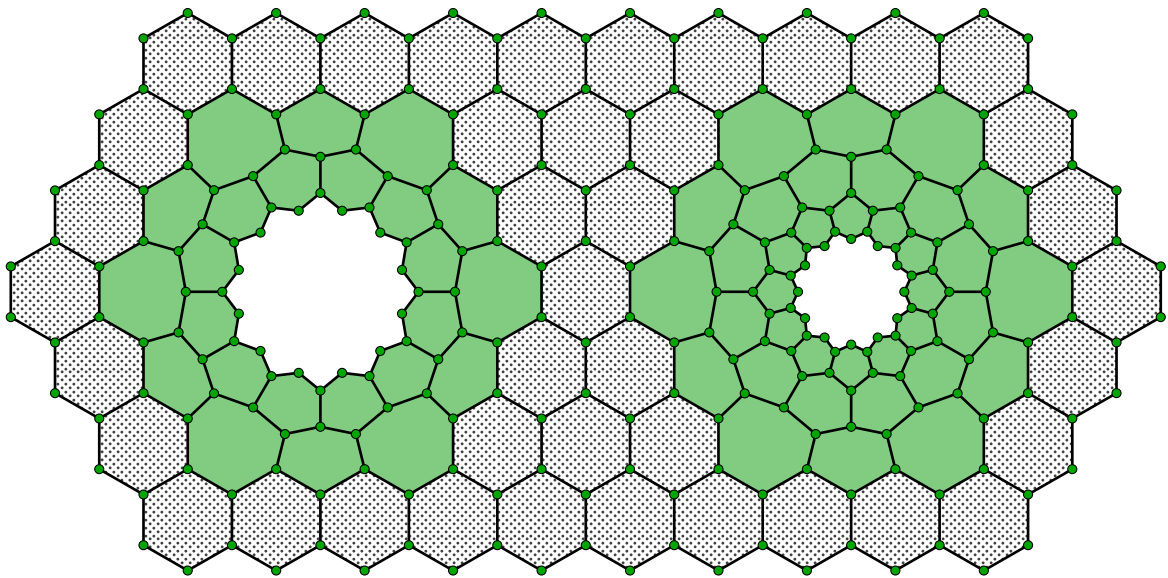}
\caption{$A^{(2, 3, 0)}(\mathcal{K})$.}
\label{fig:example36}
\end{figure}
\end{example}

\begin{proposition}
A proper generalised altan of a coronoid is not a coronoid.
\end{proposition}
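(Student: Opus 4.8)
The plan is to exhibit a numerical invariant that every coronoid perimeter satisfies but that every altan necessarily violates. The invariant is the following turning-angle (discrete Gauss--Bonnet) count: on any single boundary cycle of a coronoid the numbers of degree-2 and degree-3 vertices cannot be equal. Traversing a perimeter as a simple closed curve, each degree-2 vertex (interior angle $120^\circ$) contributes $+60^\circ$ to the total turning and each degree-3 vertex (interior angle $240^\circ$) contributes $-60^\circ$; since the total turning of a simple closed boundary is $\pm 360^\circ$, one gets
\[
\#\{\text{degree-2}\} - \#\{\text{degree-3}\} = \pm 6 \neq 0
\]
on each perimeter ($+6$ on the outer perimeter, $-6$ on an inner one). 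This is precisely the exact form of the facts noted above, that the outer perimeter carries at least $6$ and each inner perimeter at least $2$ vertices of degree $2$. Only the non-vanishing of this difference will be used, so it applies uniformly to outer and inner perimeters alike.

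First I would pin down the local effect of one altan step on degrees and on the boundary. When the altan is performed along a cycle with $d\ge 2$ degree-2 vertices, each such vertex acquires a spoke and becomes a degree-3 vertex whose three incident edges now all separate two bounded faces; the same is true of the degree-3 vertices already on that cycle, whose two boundary edges now each border a newly created ``rim'' face on the outside. Hence the entire old perimeter is absorbed into the interior, and the newly added cycle $C'$ on $2d$ vertices becomes a boundary cycle of the result. On $C'$ the $d$ spoke vertices have degree $3$, the $d$ free vertices have degree $2$, and these alternate strictly around $C'$; thus $C'$ is a perimeter on which the numbers of degree-2 and degree-3 vertices are equal, both being $d$.

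Since $\mathbf{n}\neq\mathbf{0}$, at least one perimeter $C_i$ is altered, and the outermost cycle $C'$ grown from $C_i$ (produced by the last altan applied in that direction; recall the order of the local operations is irrelevant) is such an alternating boundary cycle. If $A^{\mathbf{n}}(\mathcal{K})$ were a coronoid, then by Proposition~\ref{prop:prop1} its border would be a disjoint union of perimeter cycles, $C'$ would be one of them, and $C'$ would have to satisfy $\#\{\text{degree-2}\}-\#\{\text{degree-3}\}=\pm 6$; but this difference is $0$, a contradiction. Therefore $A^{\mathbf{n}}(\mathcal{K})$ is not a coronoid.

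The step I expect to be the main obstacle is the bookkeeping of the second paragraph: one must verify carefully that after a single altan step \emph{every} vertex of the old perimeter becomes internal (so that $C'$, rather than the old perimeter, is the boundary cycle), and that $C'$ is a single $2$-regular cycle component of the border, so that Proposition~\ref{prop:prop1} genuinely delivers $C'$ as a perimeter and the invariant is available. A minor point worth stating explicitly is the sign of the invariant for inner versus outer perimeters; but since only its non-vanishing enters the argument, the orientation conventions do not affect the conclusion.
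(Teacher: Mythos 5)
Your invariant is true (every perimeter of a coronoid satisfies $\#\{\text{degree-2}\}-\#\{\text{degree-3}\}=\pm 6$, by exactly the turning-angle count you give), and your computation of the local effect of one altan step is also correct: in the drawing inherited from the construction, the old perimeter becomes internal and the new cycle $C'$ strictly alternates degree-2 and degree-3 vertices. This is a genuinely different route from the paper's, which argues via face sizes (a $(2,2)$-edge on the outer perimeter produces a pentagon; a $(3,3)$-edge on an inner perimeter produces a face of size at least $7$).

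However, there is a genuine gap, and it is not the bookkeeping you flagged at the end. The step that fails is ``$C'$ would be one of them.'' What must be proved is that the \emph{abstract graph} $A^{\mathbf{n}}(\mathcal{K})$ is not isomorphic to $G(\mathcal{N})$ for any coronoid $\mathcal{N}$. The border and perimeters of $\mathcal{N}$ are defined by $\mathcal{N}$'s hexagonal structure, not by the drawing in which you built the altan; an isomorphism $G(\mathcal{N})\cong A^{\mathbf{n}}(\mathcal{K})$ carries the perimeters of $\mathcal{N}$ to \emph{some} cycles of $A^{\mathbf{n}}(\mathcal{K})$, and nothing in your argument forces $C'$ to be among them. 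You also cannot rescue this by uniqueness of the planar embedding: coronoid graphs are only $2$-connected (naphthalene is already not $3$-connected), so Whitney's theorem does not apply and ``bounds a face'' is not an isomorphism invariant. To make the identification legitimate you need an intrinsic characterization of hexagons and hence of the border: pass to a non-degenerate realization (Lemma~\ref{lemma:thereisanondeg}), after which every $6$-cycle of the graph is a hexagon boundary and every edge lies on at least one hexagon (the proposition following Proposition~\ref{prop:girthoflattice6}). Once that machinery is in play, the contradiction arrives most naturally through cycle lengths rather than through your $\pm 6$ count: the altan contains a $5$-cycle (outer perimeter, the paper's pentagon) or a $7$-, $8$- or $9$-cycle bounding a new face (inner perimeters), and no such cycles exist in the hexagonal lattice. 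Your turning-number invariant then survives only in an auxiliary role, namely to show that not all gaps between consecutive degree-2 vertices on a perimeter can equal $1$ (since $\sum_i \ell_i = d \mp 6 \neq d$), so that some new face really does have forbidden size; this is precisely the work done in the paper by the $(2,2)$- and $(3,3)$-edge observations.
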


\begin{proof}
In the case of a benzenoid, we restate Gutman's observation \cite{gutman2014} on the
structural features of altan-benzenoids. Every benzenoids contains a $(2, 2)$-edge,
i.e.,\ an edge connecting two degree-2 vertices. Figure~\ref{subfig:scenario1} shows a
fragment of a benzenoid with a $(2, 2)$-edge. This gives rise to a pentagon in its altan.
(The new vertices that are obtained by the altan operation are distinguished by shading.)

\begin{figure}[!ht]
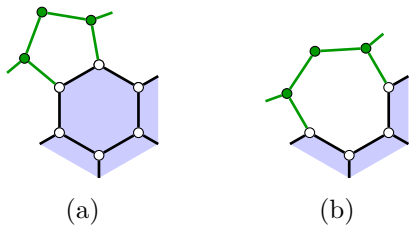

\centering
\subfigure[]{
\label{subfig:scenario1}
\input{img/pentagon_appears.tikz}
}
\qquad
\subfigure[]{
\label{subfig:scenario2}
\input{img/heptagon_appears.tikz}
}
\caption{The altan of a coronoid is no longer a coronoid.}
\label{fig:pentagon_appears}
\end{figure}

In the case of a coronoid, the same proof works for the outer perimeter. For an inner
perimeter, one can observe that it must contain at least one $(3, 3)$-edge which corresponds
to a $(2, 2)$-edge in its corona hole. This means that there are at least two vertices
of degree 3 between some pair of degree-2 vertices. This give rise to a heptagon or an
even larger face (see Figure~\ref{subfig:scenario2}).
\end{proof}

\noindent However, Gutman has shown \cite{gutman2014} that when the altan is performed on a convex
benzenoid \cite{cruzgutmanrada2012}, degrees of the newly obtained faces are limited to 5 and 6.
Traditionally, a patch is defined as a subcubic plane graph that has all its degree-2 vertices
on its outer perimeter. Clearly, the skeleton $G(\mathcal{P})$ of a patch $\mathcal{P}$ is such a graph.
The following result shows that our definition actually coincides with the traditional one:

\begin{proposition}
Let $G$ be a plane subcubic graph with all the degree-2 vertices on its outer perimeter.
Then there exists a plane cubic graph $\widetilde{G}$, such that $G \subseteq \widetilde{G}$,
all inner faces of $G$ are also faces of $\widetilde{G}$ and there exists patch
$\mathcal{P} \subseteq \mathcal{F}_{\widetilde{G}}$ such that $G = G(\mathcal{P})$. Moreover,
if $G$ is 2-connected with at least two degree-2 vertices, then there exists a 2-connected
graph $\widetilde{G}$.
\end{proposition}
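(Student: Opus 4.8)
The plan is to take $\mathcal{P}$ to be the set of bounded (inner) faces of $G$ and to manufacture a cubic plane graph $\widetilde{G}$ by adding new cells only inside the outer face of $G$; then $\mathcal{P}$ will be a patch of $\widetilde{G}$ with $G(\mathcal{P}) = G$. First I would record the cheap half. Recall that the skeleton $G(\mathcal{P})$ consists of exactly those vertices and edges incident to a face of $\mathcal{P}$, i.e.\ to an inner face of $G$; since every vertex and edge I shall add is incident only to faces created inside the outer region, establishing $G = G(\mathcal{P})$ reduces to checking that every vertex and edge of $G$ already borders an inner face. This is immediate away from bridges, and I flag the bridge case as a point to be returned to. The only genuine work is therefore to cap off the outer face by a cubic \emph{collar} that raises every degree-$2$ vertex to degree $3$ while keeping the complement connected.

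The central construction is a crown. Let $v_1, \dots, v_d$ be the degree-$2$ vertices of $G$, listed in the cyclic order in which they are met along the boundary walk of the outer face. I would introduce new vertices $w_1, \dots, w_d$ placed radially outside the respective $v_i$, add the \emph{spokes} $v_i w_i$, and join the $w_i$ into a cycle $w_1 w_2 \cdots w_d w_1$ drawn as a large circle enclosing $G$. Because the $w_i$ match the cyclic order of the $v_i$, no two spokes cross, so $\widetilde{G}$ is plane; every $v_i$ now has degree $3$, every $w_i$ has degree $3$ (one spoke and two cycle edges), and all other degrees are unchanged, so $\widetilde{G}$ is cubic. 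The new faces are the $d$ collar faces filling the annulus between the boundary of $G$ and the crown cycle, together with the new unbounded face: consecutive collar faces share a spoke, and each crown edge $w_i w_{i+1}$ separates a collar face from the unbounded face, so all new faces are pairwise reachable and $\mathcal{P}^{\complement}$ is connected; hence $\mathcal{P}$ is a patch. The degenerate small cases, where a simple $d$-cycle is unavailable, I would dispatch separately: for $d = 0$ the graph is already cubic and $\widetilde{G} = G$ works; for $d = 1$ I attach the single port to a pendant copy of $K_4$ with one subdivided edge (its unique degree-$2$ vertex serving as the attachment point); and for $d = 2$ I replace the crown cycle by a copy of $K_4$ minus an edge, whose two degree-$2$ vertices are joined to $v_1$ and $v_2$ by spokes. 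In each case the gadget is cubic, planar, lives in the outer face, and a direct check leaves the complement connected.

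For the ``moreover'' clause I would argue $2$-connectivity directly. Here $d \ge 2$ and $G$ is $2$-connected, so its outer perimeter is a genuine cycle and the $v_i$ have a well-defined cyclic order. The collar used — the cycle $w_1 \cdots w_d$ for $d \ge 3$, or the copy of $K_4$ minus an edge for $d = 2$ — is itself $2$-connected, and it is attached to the $2$-connected graph $G$ by at least two independent spokes whose endpoints $v_1, \dots, v_d$ are distinct. Gluing two $2$-connected graphs by two or more independent edges with distinct endpoints yields a $2$-connected graph: no inherited or newly added vertex can be a cut vertex, since deleting it leaves the crown attached to $G$ through a surviving spoke and leaves each of $G$ and the crown internally connected. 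Equivalently, I could invoke the preceding proposition and verify that the resulting patch has no ill-behaved face.

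The main obstacle I anticipate is not the $2$-connected case, which is clean, but making the crown construction and the cyclic ordering of the $v_i$ rigorous when $G$ is only assumed subcubic. A degree-$2$ vertex lying on a bridge is traversed twice by the boundary walk and borders no inner face, so I must specify in which angular sector at $v_i$ the spoke is inserted, confirm that the spokes still admit a non-crossing radial layout, and either assume or arrange that every edge of $G$ borders an inner face so that the identity $G = G(\mathcal{P})$ survives. This bookkeeping around bridges, together with the degenerate cases $d \le 2$, is where the care is needed.
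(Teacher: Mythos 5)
Your construction is, in the main case, exactly the paper's: for $d \ge 3$ degree-$2$ vertices the paper forms the altan $A(G,C)$ on the outer perimeter $C$ and then suppresses the $d$ newly created degree-$2$ vertices, and the result of that two-step operation is precisely your crown (a $d$-cycle of new vertices attached by non-crossing spokes to $v_1,\dots,v_d$), so there the two proofs differ only in packaging. The genuine differences are peripheral but real. For $d=2$ the paper avoids the multigraph by subdividing the new perimeter and running the altan-and-suppress step a second time, whereas you attach a $K_4$-minus-an-edge gadget in one shot; your gadget is sound (its two triangles, the single collar face and the new unbounded face are pairwise linked through the diagonal, the outer $4$-cycle and the spokes, so the complement stays connected), and for $d=1$ your subdivided $K_4$ is literally the paper's ``cherry''. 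For the moreover clause the paper extends an ear decomposition of $G$ across the new edges, while you argue directly that gluing two $2$-connected graphs by at least two independent spokes creates no cut vertex; both arguments are fine. Finally, your closing caveat about bridges is not excess bookkeeping: if $G$ has a bridge $e$ (say two triangles joined by a path), then no choice of $\widetilde{G}$ and $\mathcal{P}$ satisfies $G = G(\mathcal{P})$ at all, because a face of $\mathcal{P}$ incident to $e$ would need its entire boundary inside $G$, which forces that face to be the outer face of $G$, and the outer face of $G$ cannot survive as a face of $\widetilde{G}$ once the new edges required at the degree-$2$ vertices are drawn into it. So the proposition implicitly assumes that every edge of $G$ borders an inner face (equivalently, for connected subcubic $G$, that $G$ is bridgeless); the paper's proof uses this assumption silently, and your flagging of it is a strength of your write-up rather than a gap in it.
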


\begin{proof}
If $G$ contains three or more degree-2 vertices, choose $C$ to be the outer perimeter of $G$
and make altan $A(G, C)$. Then remove all the newly obtained degree-2 vertices and reconnect its
neighbours (reverse operation to subdivision) as shown in Figure~\ref{fig:traditionalproof1}.
\begin{figure}[!h]
\centering
\input{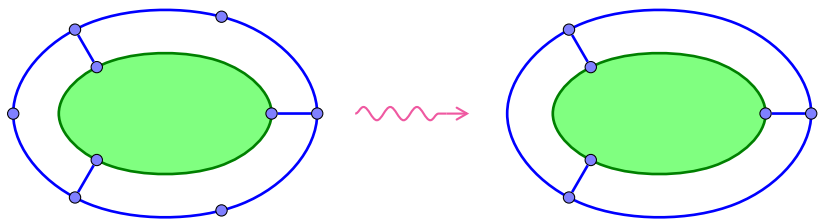}
\caption{Obtaining $\widetilde{G}$ from $G$ with three or more degree-2 vertices.}
\label{fig:traditionalproof1}
\end{figure}
It is trivial to verify that this graph is indeed the desired $\widetilde{G}$. If $G$ was
2-connected then its ear-decomposition can easily be extended to include the newly obtained
edges. This shows that $\widetilde{G}$ is also 2-connected.

If $G$ had only two degree-2 vertices, then the above procedure would yield a multigraph. It can
be fixed by subdiving its edges on the new outer perimeter to obtain 3 or more degree-2 vertices
and repeating this operation as shown in Figure~\ref{fig:traditionalproof2}.
\begin{figure}[!ht]
\centering
\input{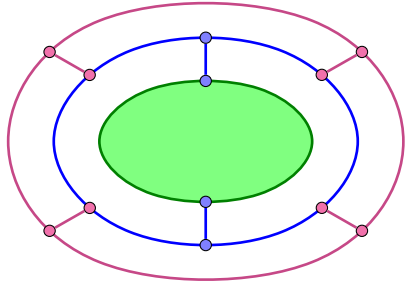}
\caption{Obtaining $\widetilde{G}$ from $G$ with two degree-2 vertices.}
\label{fig:traditionalproof2}
\end{figure}
Again, it is not hard to see that this yields the desired $\widetilde{G}$ and that the
ear-decomposition of $G$ can be extended to $\widetilde{G}$.

\begin{figure}[!h]
\centering
\input{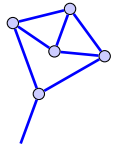}
\caption{A ``cherry''.}
\label{fig:traditionalproof3}
\end{figure}
If $G$ has only one degree-2 vertex, there is no hope of obtaining 2-connected $\widetilde{G}$.
Graph $\widetilde{G}$ can be obtain from $G$ by attaching a ``cherry'' (see Figure~\ref{fig:traditionalproof3})
to its degree 2 vertex.
\end{proof}

\noindent We can say more:

\begin{proposition}
Let $G$ be a plane subcubic \emph{bipartite} graph with all degree-2 vertices on its outer perimeter.
Then there exists a plane cubic \emph{bipartite} graph $\widetilde{G}$, such that
$G \subseteq \widetilde{G}$, all faces of $G$ are also faces of $\widetilde{G}$ and there exists
a patch $\mathcal{P} \subseteq \mathcal{F}_{\widetilde{G}}$ such that $G = G(\mathcal{P})$. Moreover,
if $G$ was 2-connected then there exists a 2-connected graph $\widetilde{G}$ with desired properties.
\end{proposition}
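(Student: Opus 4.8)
The plan is to adapt the construction from the previous proposition (the non-bipartite case) and check at each step that bipartiteness is preserved, since the only genuinely new requirement here is that the enlarged graph $\widetilde{G}$ remain bipartite. First I would recall that a plane graph is bipartite if and only if every face has even length (boundary walk of even length). Thus the strategy is to perform the same enlargement as before but ensure that every newly created face is even, and that the odd-length issues which forced us to introduce a ``cherry'' in the non-bipartite case do not recur.

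The key observation is that in a \emph{bipartite} subcubic graph, the number of degree-2 vertices on the outer perimeter cannot be arbitrary: if $G$ is bipartite and $2$-connected, its outer perimeter is an even cycle, and one can count parities to see that there are always at least two degree-2 vertices, and in fact the construction behaves well. I would proceed by cases on the number of degree-2 vertices, mirroring the previous proof. If there are three or more degree-2 vertices, I apply the altan operation $A(G, C)$ to the outer perimeter $C$, then suppress (un-subdivide) the newly created degree-2 vertices exactly as in Figure~\ref{fig:traditionalproof1}. The essential check is that each new bounded face created between two spokes is a face whose boundary alternates in a way that keeps its length even; since the altan attaches a new cycle of even length $2d$ and the spokes connect at every second vertex, the resulting new faces each have length $4$ (after suppression) or can be arranged to be even, so bipartiteness is maintained. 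I would verify that the suppression step merges edges in pairs so that parity of every retained face is unchanged.

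If $G$ has exactly two degree-2 vertices, I would again follow the earlier device of subdividing edges on the new outer perimeter to manufacture more degree-2 vertices before enlarging (Figure~\ref{fig:traditionalproof2}), but now each subdivision must be done an \emph{even} number of times along any face, or paired across faces, so that no face acquires odd length. The natural fix is to subdivide each relevant edge exactly once and add matching spokes, which adds an even contribution to every incident face. This is where I expect the main obstacle to lie: in the non-bipartite proof the ``cherry'' attachment (Figure~\ref{fig:traditionalproof3}) creates a triangle and a pentagon, which are odd faces and therefore destroy bipartiteness, so the single-degree-2-vertex case cannot be handled the same way. However, a bipartite $2$-connected subcubic graph cannot have exactly one degree-2 vertex: a parity/handshake argument on the even outer cycle shows degree-2 vertices come in a pattern forcing at least two (indeed the count $\nu \geq 2$ type bound). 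Hence the troublesome single-vertex case simply does not arise under the bipartite hypothesis, which is precisely why the statement can promise a $2$-connected bipartite $\widetilde{G}$ without the cherry escape hatch.

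Thus the proof reduces to the two even-face-preserving constructions above, and the crux is the parity bookkeeping: I would make explicit that attaching an even cycle via spokes at alternate vertices creates only quadrilateral faces, and that any preliminary subdivisions are performed so as to keep all face lengths even. The final verification that $\mathcal{P} \subseteq \mathcal{F}_{\widetilde{G}}$ with $G = G(\mathcal{P})$, and that the ear-decomposition extends, is identical to the previous proposition and can be cited. The only real content is confirming that each geometric modification adds an even number of edges to every face it touches, so that $\widetilde{G}$ inherits bipartiteness from $G$.
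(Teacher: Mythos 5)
There is a genuine gap: your central parity claim---that applying the altan to the whole outer perimeter and then suppressing the new degree-2 vertices creates only quadrilateral faces, or faces that ``can be arranged to be even''---is false. After suppression, the new bounded face lying between two consecutive spokes has length $p+3$, where $p$ is the number of perimeter edges of $G$ between the two corresponding consecutive degree-2 vertices. In a bipartite graph, $p$ is even precisely when those two degree-2 vertices lie in the \emph{same} colour class, and then $p+3$ is odd. This situation is unavoidable: already for naphthalene (bipartite, 2-connected, eight degree-2 vertices) the two degree-2 vertices flanking a degree-3 perimeter vertex have $p=2$, so your construction produces pentagons and $\widetilde{G}$ is not bipartite. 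The outer face can fail as well: the new outer perimeter after suppression has length equal to the number $d$ of degree-2 vertices of $G$, which can be odd (phenalene has $d=9$). So the case of three or more degree-2 vertices, which you treat as a routine transcription of the non-bipartite proof, is exactly where the difficulty lies. The same objection applies to your sketch for the two-vertex case: ``subdivide so that no face acquires odd length'' is not backed by an actual construction, since a single subdivision flips the parity of both incident faces.

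This is why the paper's proof never applies the altan to the full perimeter. Instead it runs a reduction on the number of degree-2 vertices: choose two degree-2 vertices $u,v$ of the same colour with no other degree-2 vertex of that colour between them along the perimeter; apply the altan only to the peripheral root $(u,w_1,\dots,w_l,v)$ formed by $u$, $v$ and the opposite-coloured degree-2 vertices between them; suppress the resulting degree-2 vertices adjacent to $u'$ and $v'$ and delete the edge $u'v'$ (when $l=0$, simply add one new vertex joined to $u$ and $v$). Each such step preserves bipartiteness and 2-connectivity and decreases the number of degree-2 vertices by exactly one. When only two degree-2 vertices remain they must have opposite colours---the single-vertex case is excluded by a mod-3 edge count using bipartiteness (all black vertices of degree 3 forces $|E|\equiv 0 \pmod 3$, while one white degree-2 vertex forces $|E|\equiv 2 \pmod 3$), not by the handshake/parity argument you allude to---and they are finished off by adding one edge, or a planar gadget if they are already adjacent. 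Your proposal is missing this reduction idea, and without it the constructions you describe do not preserve bipartiteness.
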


\begin{proof}
Suppose there are at least two degree-2 vertices of the same colour, i.e., belong to the same
set of the bipartition, in graph $G$ and denote those two vertices by $u$ and $v$. Without loss
of generality, we can assume they are coloured black. Those two vertices can be choosen in such
way that there are no other black vertices between them when we traverse the perimeter from
$u$ to $v$ in the clockwise direction (see Figure~\ref{subfig:traditionala}).
\begin{figure}[!ht]
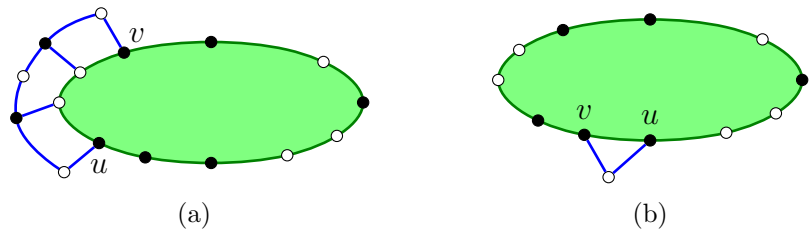

\centering
\subfigure[]{
\label{subfig:traditionala}
\input{img/traditional_proof1.tikz}
}
\qquad
\subfigure[]{
\label{subfig:traditionalb}
\input{img/traditional_proof7.tikz}
}
\caption{A step in obtaining bipartite $\widetilde{G}$ from bipartite $G$.}
\label{fig:traditionalproof4}
\end{figure}
However, there may be 0 or more white vertices on that path. Say there are one or more white vertices.
Label those vertices $w_1, w_2, \ldots, w_l$, where $l \geq 1$. Make an altan of $G$ with vertices
$(u, w_1, \ldots, w_l, v)$ as its \emph{peripheral root}. Label the newly obtained neighbours of
vertices $u$ and $v$ with $u'$ and $v'$, respectively. Then make a reverse subdivision operation which
removes all degree-2 vertices in the neighbourhood of $u'$ and $v'$. Also, remove the edge $u'v'$ to
obtain the graph in Figure~\ref{subfig:traditionala}. If there are no white vertices between $u$ and $v$,
add a new vertex to the graph and connect it to $u$ and $v$ as shown in Figure~\ref{subfig:traditionalb}.
In both cases, this graph is clearly bipartite and 2-connected if the graph $G$ was 2-connected. Also,
two black degree-2 vertices and $l$ white degree-2 vertices have disappered and $l+1$ new white
degree-2 vertices have emerged. The total number of degree-2 vertices is therefore decreased by one.

This procedure terminates when there are only two vertices left, which have to be of different
colours. (The situation with only one degree-2 vertex, say a white vertex, cannot occur.
The number of edges should be divisible by 3, because every black vertex has degree 3. On the
other hand, there is one white vertex of degree 2 and the rest have degree 3, which implies
that the number of edges is congruent 2 modulo 3, a contradiction.) 
\begin{figure}[!h]
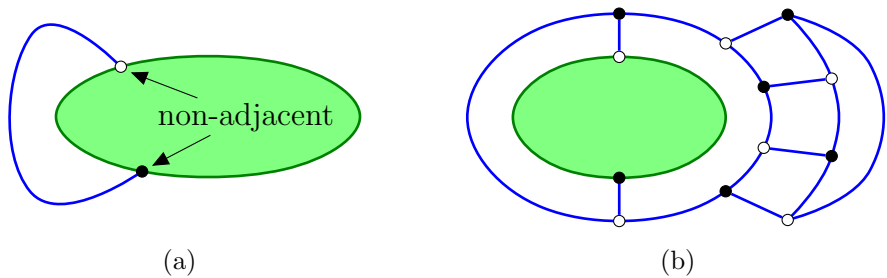

\centering
\subfigure[]{
\label{subfig:traditional2a}
\input{img/traditional_proof3.tikz}
}
\qquad
\subfigure[]{
\label{subfig:traditional2b}
\input{img/traditional_proof2.tikz}
}
\caption{The final step in obtaining bipartite $\widetilde{G}$ from bipartite $G$.}
\label{fig:traditionalproof5}
\end{figure}
If those final two vertices (which have to be of different colour) are non-adjacent,
connect them by an edge as shown in Figure~\ref{subfig:traditional2a}. If they are
adjacent, we would create a multigraph. In that case, use the construction shown in
Figure~\ref{subfig:traditional2b} to avoid the multigraph.
\end{proof}

\noindent A perforated patch with pentagonal and hexagonal faces is called a \emph{perforated fullerene patch}.
Similary, a patch with pentagonal and hexagonal faces is called a \emph{fullerene patch}.
One should be aware that the restriction to hexagonal and pentagonal faces applies within the patch
$\mathcal{P}$; other faces of the cubic graph $G$ from which the patch was derived may be of other sizes.
If such a graph $G$ with exclusively pentagonal and hexagonal faces exists, then the patch
(or perforated patch) can be extended to a fullerene. It is not easy to verify the existence of such $G$.

\begin{example}
Figure~\ref{fig:anythingcanhappen} shows that various possibilities can occur when we apply the
altan operation to a fullerene patch. In first case (left-hand side of Figure~\ref{fig:anythingcanhappen}),
the altan contains a 7-gon. In the second case (right-hand side), the altan is again a fullerene patch
\begin{figure}[!ht]
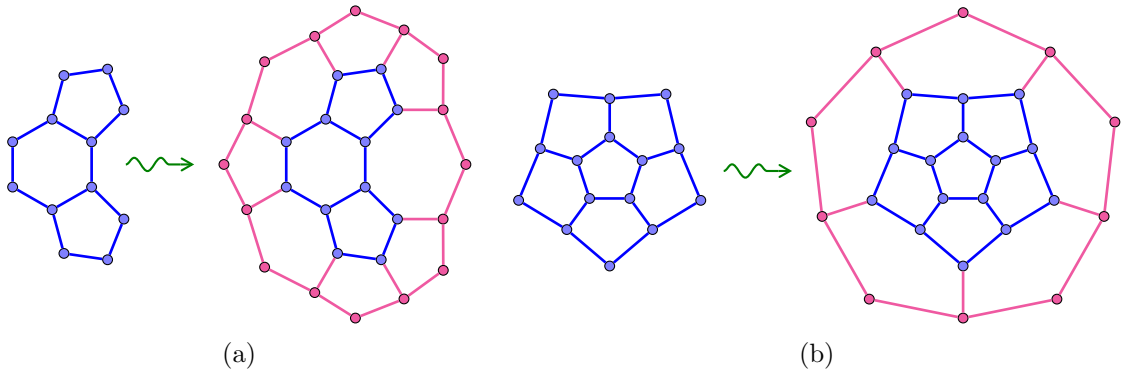

\centering
\subfigure[]{
\input{img/anything_happen.tikz}
}
\subfigure[]{
\input{img/anything_happen2.tikz}
}
\caption{Altans of fullerene patches.}
\label{fig:anythingcanhappen}
\end{figure}
(which may or may not extend to a fullerene).
\end{example}

\noindent There is another viewpoint we can take when dealing with (perforated) patches. In addition to the
skeleton $G(\mathcal{P})$, one can also obtain a planar \emph{pre-graph}, denoted $P(\mathcal{P})$.
It can be obtained from the plane graph $G$ by removing all vertices that are not incident to any
face of $\mathcal{P}$, together with all semiedges that are incident to removed vertices. In addition,
edges incident to two faces from $\mathcal{P}^\complement$ are also removed and replaced with two
half edges (as if the edge was cut in the middle). An example is given in Figure~\ref{fig:pregraphexample}.

\begin{figure}[!ht]
\centering
\input{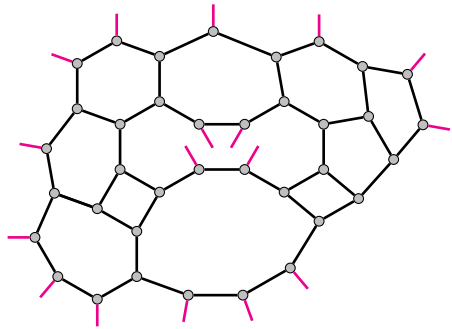}
\caption{A pre-graph of a patch. Half edges are ``without vertices'' on one end, i.e.,
they are ``dangling''.} 
\label{fig:pregraphexample}
\end{figure}

\begin{theorem}
\label{thm:altanofpatchispatch}
The generalised altan of a perforated patch $\mathcal{P}$ is a perforated patch. Moreover,
if $G(\mathcal{P})$ is 2-connected then $G(A^{\mathbf{n}}(\mathcal{P}))$ is also 2-connected.
\end{theorem}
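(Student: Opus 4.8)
The plan is to peel the iterated operation apart and prove the statement for a single ``local'' altan, then propagate by induction. By the recursive definition of $A^{\mathbf n}$ we have $A^{\mathbf n}(\mathcal{P}) = A^{\mathbf n^{\dagger}}(A(\mathcal{P}; C_1,\dots,C_k; J^{\dagger}_{\mathbf n}))$, and each application of $A(\,\cdot\,; J^{\dagger}_{\mathbf n})$ is a composition of single-perimeter altans. So it suffices to show: (i) the altan at one perimeter $C$ of a perforated patch $\mathcal{P}$ is again a perforated patch; (ii) its perimeters again form an admissible structure, with $C$ replaced by the new boundary cycle $C'$, so that the recursion may continue; and (iii) $2$-connectedness of $G(\mathcal{P})$ is inherited. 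The base case $\mathbf n = \mathbf 0$ is $\mathcal{P}$ itself, and (i)--(iii) for a single step then yield the theorem by induction on $\sum_i n_i$.

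For the single step I would first fix the local picture. Writing the degree-$2$ vertices of $C$ as $u_1,\dots,u_d$ ($d \ge 2$) in cyclic order, the altan attaches a new cycle $C' = (p_1, w_1, p_2, w_2, \dots, p_d, w_d)$ on $2d$ vertices, joins each $p_i$ to $u_i$ by a spoke, and thereby creates $d$ new bounded faces $F_1,\dots,F_d$, where $F_i$ is bounded by the spoke $u_i p_i$, the length-two arc $p_i w_i p_{i+1}$ of $C'$, the spoke $p_{i+1}u_{i+1}$, and the arc of $C$ from $u_{i+1}$ back to $u_i$. In the new skeleton the $u_i$ and $p_i$ all have degree $3$, and the only \emph{new} degree-$2$ vertices are $w_1,\dots,w_d$, each lying on the boundary of the single region $R$ that $C'$ encloses on the side away from the $F_i$. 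Connectivity of $A(\mathcal{P},C) = \mathcal{P} \cup \{F_1,\dots,F_d\}$ is then immediate: each arc of $C$ in $\partial F_i$ consists of boundary edges of $\mathcal{P}$, so $F_i$ is adjacent to a face of $\mathcal{P}$, and consecutive $F_i, F_{i+1}$ share the spoke $u_{i+1}p_{i+1}$.

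To exhibit this set as a connected proper subset of the faces of a finite plane cubic graph, I would re-realise the skeleton rather than extend the original ambient graph (in which the $u_i$ are already degree $3$, so no spoke can be attached there). The graph $H = G(A(\mathcal{P},C))$ is a finite plane subcubic graph whose degree-$2$ vertices all lie on the boundaries of the complementary regions: the new ones $w_i$ on $\partial R$, and the old ones on the untouched perimeters of $\mathcal{P}$. Treating each such region as the outer face by a homeomorphism of the sphere (legitimate, since the outer face plays no special role) and applying the earlier proposition that a plane subcubic graph with all degree-$2$ vertices on its outer perimeter embeds as the skeleton of a patch inside a plane cubic graph, I cap off the degree-$2$ vertices region by region; since these regions are pairwise disjoint faces of $H$, the cappings do not interfere and their union gives a plane cubic graph $\widetilde{G}$ realising $H$, with all of $\mathcal{P}$ and $F_1,\dots,F_d$ genuine faces. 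Properness holds because $R$ lies in the complement, and $C'$ carries $d \ge 2$ degree-$2$ vertices shared with no other boundary cycle, so it is a perimeter and the admissible structure is restored, giving (ii).

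For (iii) I would use an ear decomposition. Assuming $G(\mathcal{P})$ is $2$-connected, I build the gadget on one ear at a time: first the ear $u_1, p_1, w_1, p_2, u_2$ (distinct endpoints since $d \ge 2$), then the ears $p_i, w_i, p_{i+1}, u_{i+1}$ for $i = 2,\dots,d-1$, and finally the ear $p_d, w_d, p_1$ closing $C'$. Each is a path whose two endpoints already lie in the current graph and whose interior vertices are new, so every addition preserves $2$-connectedness; hence $G(A(\mathcal{P},C))$ is $2$-connected and induction finishes the claim (one may instead invoke the earlier characterisation of $2$-connected patches by absence of ill-behaved faces). I expect the realisation step --- capping the new degree-$2$ vertices and verifying that the altan really sits inside a \emph{cubic} graph without accidentally fusing or losing the region $R$, especially when the altan is performed on an inner perimeter --- to be the main obstacle, with the sphere viewpoint and the subcubic-extension proposition being the tools that overcome it.
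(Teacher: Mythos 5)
Your proposal is correct, but it realises the altan inside a cubic ambient graph by a genuinely different mechanism than the paper. The paper works with the pre-graph $P(\mathcal{P})$: the contents of each hole of the ambient cubic graph $G$ are removed, leaving half-edges dangling at the degree-2 vertices; the altan cycle is drawn inside the hole, creating an annulus of new faces; and since the new perimeter carries exactly as many degree-2 vertices (half-edges) as the old one, the removed parts of $G$ are simply reattached one level deeper, giving the new plane cubic graph at once for all perimeters and all iterations. You instead discard $G$ entirely, take the subcubic skeleton of the altan, and rebuild an ambient cubic graph by capping each complementary region using the earlier proposition on extending plane subcubic graphs to cubic ones (with the sphere viewpoint to make any region the outer face). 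Your route is more modular and supplies details the paper leaves to figures and to ``it is clear that'' (explicit connectivity of the new face set via shared spokes and boundary edges of $C$, explicit admissibility of the resulting structure for the recursion, explicit ears for 2-connectedness); the paper's route keeps the new ambient graph canonically tied to the original $G$ and avoids any appeal to the capping proposition. One caveat in your argument: the capping proposition as stated requires \emph{all} degree-2 vertices to lie on a single (outer) face, whereas you apply it region by region while degree-2 vertices remain on the other regions; this is legitimate only because the proposition's construction (altan plus reverse subdivision, performed inside the region being capped) is local and the regions are disjoint faces, so you are really invoking its proof rather than its statement. That is a mild extension, not a gap, but it should be said explicitly. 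Both proofs use the same ear-decomposition argument for the 2-connectedness claim.
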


\begin{proof}
The pre-graph of a perforated patch $\mathcal{P}$ is schematically illustrated in
Figure~\ref{fig:pregrapha}. Each hole corresponds to a void space that can be filled with an open
disc. There are also half edges (attached to degree-2 vertices of $G(\mathcal{P})$) which are drawn
inside those holes.
\begin{figure}[!ht]
\centering
\input{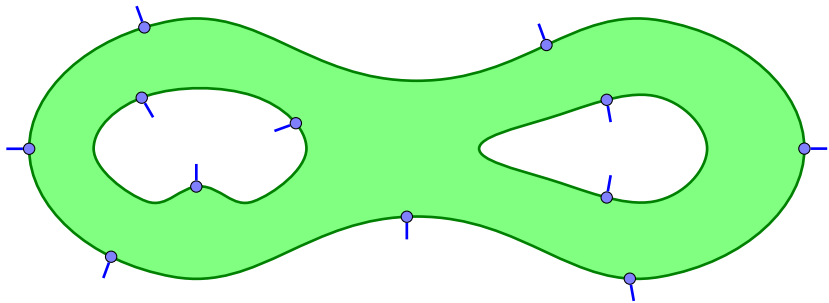}
\caption{Pre-graph of a perforated patch.}
\label{fig:pregrapha}
\end{figure}
When we perform an altan operation on that perforated patch, a cycle is drawn inside every hole on
which the altan operation is performed and an annulus of new faces (bounded by the new and the old
perimeter) is added to the patch. See Figure~\ref{fig:pregraphb} for an illustration. New holes have
the same number of half edges as they had before the operation. The parts that were removed from $G$
can be reattached to form the plane cubic graph. 
\begin{figure}[!ht]
\centering
\input{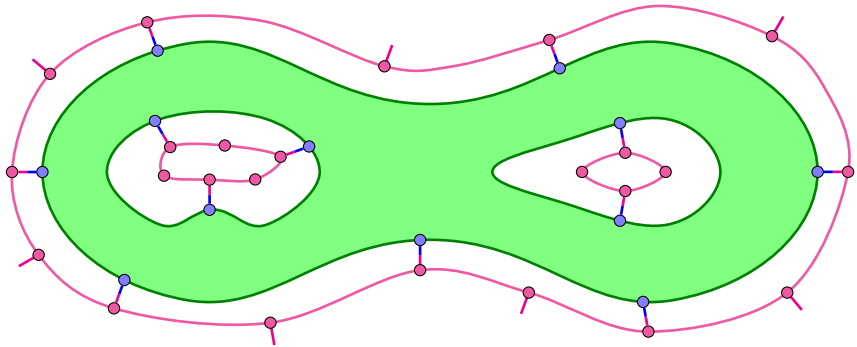}
\caption{Pre-graph of an altan of a perforated patch.}
\label{fig:pregraphb}
\end{figure}

It is clear that $G(A^\mathbf{n}(\mathcal{P}))$ is 2-connected if $G(\mathcal{P})$ is 2-connected.
The ear decomposition of $G(\mathcal{P})$ can easily be extended to include the newly obtained edges.
\end{proof}

\noindent The following corollary obviously follows from Theorem~\ref{thm:altanofpatchispatch}:

\begin{corollary}
The generalised altan of a patch $\mathcal{P}$ is a patch. Moreover, if $G(\mathcal{P})$ is 2-connected
then $G(A^{\mathbf{n}}(\mathcal{P}))$ is also 2-connected.
\end{corollary}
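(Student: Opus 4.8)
The plan is to deduce everything from Theorem~\ref{thm:altanofpatchispatch}, exploiting the fact that a patch is by definition nothing more than a perforated patch whose complement is additionally connected. The 2-connectedness assertion of the corollary is word-for-word the 2-connectedness assertion of the theorem, so it transfers with no extra argument at all. Hence the only genuine content is to upgrade the theorem's conclusion from ``perforated patch'' to ``patch'', i.e.\ to check that the complement of the altan remains connected.

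First I would apply Theorem~\ref{thm:altanofpatchispatch} to conclude that $A^{\mathbf{n}}(\mathcal{P})$ is a perforated patch, and then invoke Lemma~\ref{lemma:reallytriviallemma} to write its complement as a finite disjoint union of connected components. It therefore suffices to prove that this complement has exactly one component. Since $\mathcal{P}$ is a patch, $\mathcal{P}^\complement$ is connected, so $\mathcal{P}$ has a single hole and, by the patch analogue of Proposition~\ref{prop:prop1}, a single perimeter; thus the admissible structure attached to $\mathcal{P}$ involves only one cycle $C_1$ and the exponent reduces to $\mathbf{n} = (n_1)$.

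The key step is to track how an altan step changes the number of holes, and here I would lean directly on the pre-graph picture developed in the proof of Theorem~\ref{thm:altanofpatchispatch} (Figures~\ref{fig:pregrapha} and~\ref{fig:pregraphb}). Performing the altan on a hole inserts an annulus of new faces bounded on the outside by the old perimeter and on the inside by a fresh perimeter; the region formerly occupied by that hole is thereby replaced by the strictly smaller region enclosed by the new perimeter, which is again a single connected void. So a single altan step neither creates nor destroys holes: it merely shrinks the unique hole of $\mathcal{P}$. Because the iterated altan $A^{\mathbf{n}}$ is defined as a composition of such steps, each acting on the one available hole, the number of complementary components is invariant throughout. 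Consequently $A^{\mathbf{n}}(\mathcal{P})$ has exactly one complementary component, its complement is connected, and it is therefore a patch.

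I expect the only (modest) obstacle to be the bookkeeping that the count of complement components is preserved by a single altan step; once that invariance is recorded, the passage from ``one hole'' to ``one hole'' makes the corollary immediate, which is precisely what the phrase ``obviously follows'' anticipates.
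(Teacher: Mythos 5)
Your proposal is correct and takes the same route as the paper, which offers no written proof beyond stating that the corollary ``obviously follows'' from Theorem~\ref{thm:altanofpatchispatch}: the 2-connectedness claim is inherited verbatim, and the only content is that the altan step replaces the patch's unique hole by the smaller, still connected, region inside the new perimeter, so the complement stays connected. Your explicit bookkeeping of this one-hole-to-one-hole invariance is precisely the detail the paper leaves implicit in the pre-graph picture of Figures~\ref{fig:pregrapha} and~\ref{fig:pregraphb}.
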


\section{Kekul\'e Structures and Pauling Bond Orders}
It is not hard to see that iteration of the generalised altan operation on coronoids and perforated
patches grows tubes on each perimeter, i.e.,\ we can visualize an embedded version in a way that is
reminiscent of the classic ruled surface of the graph in which some central planar perforated patch has
tubular towers growing out of it (in either up or down directions). See Figure~\ref{fig:towers_color}.

\begin{figure}[!h]
\centering
\input{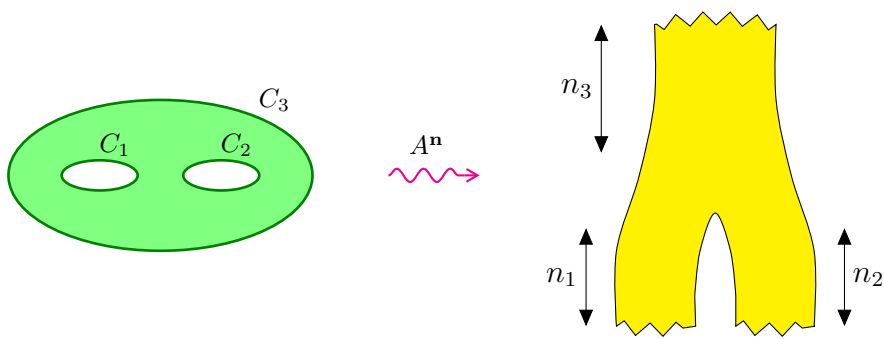}
\caption{Pants resulting from a disk with two holes by applying iterated altan operation. If the disk
has $K$ Kekul\'{e} structures then the pants have $K' = 2^{n_1 + n_2 + n_3} K$ Kekul\'{e} structures.}
\label{fig:fig7}
\end{figure}

The \emph{binary boundary code} for benzenoids is described in \cite{putz2011}. (This code is also known as PC-1 \cite{herndon1987}.) The binary boundary
code of a benzenoid is a sequence of degrees of consecutive vertices alongs its perimeter. Cyclic
shifts and reversal of the sequence are considered as equivalent codes. Traditionally, ones and zeroes
were used, but we will use 3s and 2s instead. To each perimeter $C_i$ of an admissible structure
$(G; C_1, \ldots, C_k)$ we will assign a binary boundary code, denoted $\mathit{BBC}(C_i)$.
Boundary-edges codes for benzenoids, introduced in \cite{hansen1996}, are useful on many occasions
\cite{kovic2014}, but in this case binary boundary codes are more natural.

\begin{example}
The first coronoid in Figure~\ref{fig:coroexamples} has three perimeters. Let $C_\infty$ denote the outer
perimeter and $C_1$ and $C_2$ the inner perimeters. Then
$$
\mathit{BBC}(C_\infty) = 323222332322332223232232323223232232
$$
and
$$
\mathit{BBC}(C_1) = \mathit{BBC}(C_2) = 3333233332.
$$
We will use $3^k$ as a short form of $\underbrace{33\ldots3}_{k}$. In this convention,
$\mathit{BBC}(C_1) = 3^423^42 = (3^42)^2$.
\end{example}

\begin{theorem}
Let $G$ be a coronoid and let $BBC(C) = 23^{\ell_1}23^{\ell_2}2\ldots 23^{\ell_d}$,
where $l_i \geq 0$ for $i = 1, \ldots, d$, be the binary boundary code for one of
its perimeters $C$. The degree of the $i$-th newly obtained face, $1 \leq i \leq d$,
is $\ell_i + 5$. Moreover, the binary boundary code of the new boundary is $(32)^d$.
\hfill $\square$
\end{theorem}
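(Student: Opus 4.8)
The plan is to prove both assertions by direct bookkeeping on the boundary walk, using only the definition of the altan operation $A(G,C)$ and the interpretation of the binary boundary code as the cyclic sequence of vertex degrees encountered along $C$. First I would fix notation: let $v_1, v_2, \ldots, v_d$ be the degree-2 vertices of $C$, listed in the cyclic order in which they occur along the perimeter, so that the block $3^{\ell_i}$ of $\mathit{BBC}(C)$ records the $\ell_i$ degree-3 vertices lying on the open arc of $C$ from $v_i$ to $v_{i+1}$ (indices read modulo $d$). By the definition of the altan, $A(G,C)$ adjoins a new cycle $C'$ on $2d$ vertices together with $d$ spokes. Writing $w_1, \ldots, w_d$ for the vertices of $C'$ incident with a spoke and $u_1, \ldots, u_d$ for the intervening vertices, the cycle $C'$ visits them alternately as $w_1, u_1, w_2, u_2, \ldots, w_d, u_d$, and the spoke at $v_i$ joins $v_i$ to $w_i$.

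Next I would identify the $d$ newly created faces. The spokes, together with the arcs of $C$ and $C'$, divide the annular region bounded by $C$ and $C'$ into exactly $d$ bounded faces $F_1, \ldots, F_d$, where $F_i$ is the face incident with the arc of $C$ from $v_i$ to $v_{i+1}$; this is the indexing implicit in the statement, since $F_i$ is associated with the block $3^{\ell_i}$. That the spokes cut the annulus into precisely $d$ regions, one per consecutive pair of degree-2 vertices, is the one point that must be argued carefully rather than merely asserted: it follows from planarity, each spoke being a chord across the annulus so that consecutive spokes, their endpoints, and the two arcs joining those endpoints bound a single face.

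Then I would count the boundary length of $F_i$. Its boundary walk decomposes into three parts: the arc of $C$ from $v_i$ to $v_{i+1}$, which has $\ell_i$ interior (degree-3) vertices and hence $\ell_i + 1$ edges; the two spokes $v_i w_i$ and $v_{i+1} w_{i+1}$, contributing $2$ edges; and the arc of $C'$ from $w_i$ to $w_{i+1}$, which passes through the single intermediate vertex $u_i$ and therefore contributes $2$ edges. Summing yields $(\ell_i + 1) + 2 + 2 = \ell_i + 5$, and since the degree of a face equals the number of edges on its boundary cycle, $\deg F_i = \ell_i + 5$, which is the first assertion. (The case $\ell_i = 0$, where $v_i$ and $v_{i+1}$ are adjacent on $C$, correctly produces a pentagon, matching the remark preceding the theorem.)

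Finally, for the binary boundary code of the new perimeter $C'$ I would simply read off degrees: traversing $C'$ in the order $w_1, u_1, w_2, u_2, \ldots, w_d, u_d$, each $w_j$ has degree $3$ (two neighbours on $C'$ plus its spoke) while each $u_j$ has degree $2$ (only its two neighbours on $C'$), so the cyclic degree sequence is $32\,32\cdots 32 = (32)^d$, giving $\mathit{BBC}(C') = (32)^d$. The main obstacle is thus purely the topological step of confirming that the spokes partition the annulus into exactly these $d$ faces with the claimed boundary walks; once that is established, both conclusions reduce to edge counting and to reading vertex degrees along $C'$.
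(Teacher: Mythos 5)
Your proof is correct, and it takes the approach the paper itself implies: the paper states this theorem without proof (marking it as routine with $\square$), and your annulus-plus-spokes bookkeeping is exactly the argument suggested by the paper's surrounding material (the annular band of new faces in the proof of Theorem~\ref{thm:altanofpatchispatch} and the pentagon/heptagon counts in the proposition on altans of coronoids). The edge count $(\ell_i+1)+2+2=\ell_i+5$ and the alternating degree sequence $3,2,3,2,\ldots$ along the new cycle are both right, and you correctly isolate the only point needing care, namely that the $d$ spokes partition the annulus between $C$ and $C'$ into exactly $d$ faces with the stated boundary walks.
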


\begin{theorem}
Let $G$ be a perforated patch with $K$ Kekul\'e structures and let $G' = A^\mathbf{n}(G)$
be any of its generalised altans. Then the number of Kekul\'e structures in $G'$ is given
by $K' = 2^{|\mathbf{n}|} K$, where $|\mathbf{n}| = n_1 + n_2 + \cdots + n_k$. Furthermore:
\begin{enumerate}
\renewcommand{\labelenumi}{(\alph{enumi})}
\item No spoke belongs to a Kekulé structure.
\item If $n_i > 0$, all edges on the new perimeter belong to the same number,
$\frac{K'}{2}$, of Kekul\'e structures.
\end{enumerate}  \hfill $\square$
\end{theorem}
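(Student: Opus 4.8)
The plan is to identify each Kekulé structure with a perfect matching and to reduce everything to a single \emph{elementary} altan operation, which I then iterate. The engine of the proof is a local lemma. Suppose $H$ is the current graph and $C$ one of its perimeters carrying $d \ge 2$ degree-$2$ vertices $v_1, \dots, v_d$, and let $H' = A(H,C)$ be formed by attaching the new cycle $C' = a_1 b_1 a_2 b_2 \cdots a_d b_d$ (indices mod $d$), where each $a_i$ is joined to $v_i$ by a spoke and each $b_i$ is a new degree-$2$ vertex. The lemma asserts: (i) no spoke lies in any perfect matching of $H'$; (ii) $\lvert \mathrm{PM}(H') \rvert = 2\,\lvert \mathrm{PM}(H) \rvert$; and (iii) every edge of $C'$ lies in exactly $\lvert \mathrm{PM}(H) \rvert = \tfrac12 \lvert \mathrm{PM}(H') \rvert$ perfect matchings of $H'$.

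First I would prove the forcing claim (i), which is the heart of the argument. Each $b_i$ has exactly the two neighbours $a_i$ and $a_{i+1}$, so in any perfect matching $M$ it is matched to one of them; this defines a map $b_i \mapsto M(b_i)$ into $\{a_1, \dots, a_d\}$ that is injective because $M$ is a matching. As domain and codomain both have size $d$, the map is a bijection, so every $a_i$ is already matched inside $C'$ and no spoke $v_i a_i$ can occur in $M$. Claims (ii) and (iii) then follow cleanly: with spokes excluded, $M \cap E(H)$ is forced to be a perfect matching of $H$ (each $v_i$, though now of degree $3$ in $H'$, must use one of its two original edges), while $M \cap E(C')$ is a perfect matching of the even cycle $C' \cong C_{2d}$, of which there are exactly two; these choices are independent, giving (ii). For (iii), a fixed edge $e \in E(C')$ lies in exactly one of the two matchings of $C_{2d}$, so coupling the $\lvert \mathrm{PM}(H) \rvert$ matchings of $H$ with that unique cycle matching shows $e$ occurs in precisely $\lvert \mathrm{PM}(H) \rvert$ matchings of $H'$.

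The global statements follow by induction on $\lvert \mathbf{n} \rvert = n_1 + \cdots + n_k$. Since the cycles $C_1, \dots, C_k$ share no degree-$2$ vertex, the elementary altan operations are local and commute, as already observed in the text, so $A^{\mathbf{n}}(G)$ is the outcome of $\lvert \mathbf{n} \rvert$ elementary operations applied in any order. Each operation multiplies the number of perfect matchings by $2$ via (ii), yielding $K' = 2^{\lvert \mathbf{n} \rvert} K$; statement (a) is clause (i) accumulated over all the operations. For (b), I would fix an index $i$ with $n_i > 0$ and, using commutativity, reorder the operations so that the final altan performed on perimeter $i$ is the last operation overall; the resulting graph is still $A^{\mathbf{n}}(G)$, and clause (iii) applied to this last step gives that every edge of the new $i$-th perimeter lies in exactly $\tfrac12 K'$ Kekulé structures.

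The main obstacle is less the counting than the bookkeeping around it: I must verify that the $b_i$ genuinely have no neighbours outside $\{a_i, a_{i+1}\}$ (so that the bijection argument in (i) is airtight), and, in the iterated setting, that the ``new perimeter'' named in (b) is exactly the boundary cycle created by the final altan on perimeter $i$ and that invoking commutativity to make that operation last does not alter the final graph. Once these points are pinned down, the doubling and the uniform edge count propagate through the induction without further difficulty.
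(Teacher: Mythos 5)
Your proposal is correct, and in fact it supplies a proof where the paper gives none: the theorem is stated in the paper with a closing $\square$ and no argument at all (the only hints are the caption of the ``pants'' figure, which asserts the $2^{n_1+n_2+n_3}K$ count, and the reference to the earlier iterated-altan paper). Your local lemma is exactly the right engine: the degree-$2$ vertices $b_i$ of the new cycle force, via the injectivity/cardinality argument, that every $a_i$ is matched inside $C'$, so spokes are excluded; then a perfect matching of $H'$ splits as (perfect matching of $H$) $\times$ (one of the two perfect matchings of $C_{2d}$), which gives the doubling and the uniform count $K'/2$ for the new perimeter edges. The two bookkeeping points you flag are genuine but both check out: the $b_i$ have no neighbours outside $\{a_i,a_{i+1}\}$ by the construction of the altan (every second vertex of the new $2d$-cycle is attached to a degree-$2$ vertex of $C$, the rest have degree $2$), and the commutation of altan operations at distinct perimeters is asserted explicitly in the paper (``the order in which we apply individual `local' altan operations is irrelevant''), which legitimises making the final altan at perimeter $i$ the last operation so that clause (iii) yields statement (b). Note also that your induction automatically handles the accumulation needed for (a): each perfect matching of the final graph restricts, level by level, to a perfect matching of every intermediate graph, so spokes from all stages are excluded, even those whose endpoints later acquire degree $3$.
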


\noindent The following corollary follows straightfowardly from the above theorem:
\begin{corollary}
A generalised altan $A^\mathbf{n}(G)$ is Kekulean if and only if G is Kekulean.
\hfill $\square$
\end{corollary}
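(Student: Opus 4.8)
The plan is to read off the corollary directly from the preceding theorem, which supplies the multiplicative relation $K' = 2^{|\mathbf{n}|}K$ between the Kekul\'e structure counts of $G$ and of its generalised altan $A^\mathbf{n}(G)$. First I would recall that, by definition, a graph is \emph{Kekulean} precisely when it admits at least one Kekul\'e structure, i.e.\ when its Kekul\'e structure count is strictly positive. Thus the statement ``$A^\mathbf{n}(G)$ is Kekulean iff $G$ is Kekulean'' is equivalent to the numerical assertion $K' > 0 \iff K > 0$, where $K$ and $K'$ denote the respective counts.

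Next I would invoke the theorem to substitute $K' = 2^{|\mathbf{n}|}K$. The key observation is that the factor $2^{|\mathbf{n}|}$ is a strictly positive integer: since $|\mathbf{n}| = n_1 + \cdots + n_k$ with each $n_i \ge 0$, we have $|\mathbf{n}| \ge 0$ and hence $2^{|\mathbf{n}|} \ge 1 > 0$. Multiplying a nonnegative integer $K$ by a strictly positive quantity preserves the distinction between zero and nonzero values, so $K' = 2^{|\mathbf{n}|}K$ vanishes exactly when $K$ vanishes. Equivalently, $K' > 0$ if and only if $K > 0$, which is the desired equivalence once translated back into the language of being Kekulean.

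There is no real obstacle here: the content of the result lies entirely in establishing the formula $K' = 2^{|\mathbf{n}|}K$, which is already granted by the theorem, and the corollary is a one-line consequence of the positivity of the prefactor. The only point deserving a word of care is the boundary case $\mathbf{n} = \mathbf{0}$, where $A^{\mathbf{0}}(G) = G$ and $2^{|\mathbf{n}|} = 1$; the equivalence then reduces to the tautology that $G$ is Kekulean iff $G$ is Kekulean, consistent with the convention $A^{\mathbf{0}}(G; C_1, \ldots, C_k) = (G; C_1, \ldots, C_k)$ fixed earlier. Accordingly, I expect the proof to be essentially immediate, requiring only the explicit remark that $2^{|\mathbf{n}|}$ never vanishes.
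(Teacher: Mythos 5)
Your proof is correct and follows exactly the route the paper intends: the paper states the corollary as a straightforward consequence of the preceding theorem's formula $K' = 2^{|\mathbf{n}|}K$, and your explicit observation that the prefactor $2^{|\mathbf{n}|} \geq 1$ never vanishes (so $K' > 0 \iff K > 0$) is precisely the one-line argument being invoked.
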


\begin{corollary}
Let $G$ be a perforated patch and let $G' = A^\mathbf{n}(G)$ be any of its
generalised altans. The Pauling Bond Order of the newly obtained edge $e$ is:
\begin{itemize}
\item 0 if $e$ is a spoke,
\item $\frac{1}{2}$ if $e$ is not a spoke.
\end{itemize}
Pauling Bond Orders of the edges that belong to the original graph $G$ remain the same.
\hfill $\square$
\end{corollary}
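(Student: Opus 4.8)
The plan is to reduce everything to the preceding theorem together with the standard definition of the Pauling Bond Order. Recall that the Pauling Bond Order of an edge $e$ in a graph $H$ is $p_H(e) = K_H(e)/K_H$, where $K_H$ is the total number of Kekulé structures (perfect matchings) of $H$ and $K_H(e)$ is the number of those that contain $e$. So for each of the three claims I only need to count Kekulé structures through $e$ and divide by the total, which the theorem gives as $K' = 2^{|\mathbf{n}|}K$.

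First I would dispose of the two cases involving new edges. If $e$ is a spoke, then by part~(a) of the theorem no Kekulé structure of $G'$ contains $e$, so $K_{G'}(e) = 0$ and hence $p_{G'}(e) = 0/K' = 0$. The remaining newly obtained edges are exactly those lying on the new perimeters; if such an $e$ lies on a perimeter created at step $i$ (so necessarily $n_i > 0$), then part~(b) gives $K_{G'}(e) = K'/2$, whence $p_{G'}(e) = (K'/2)/K' = \tfrac12$. Both cases are immediate once the theorem is in hand.

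The substantive step is the invariance of the Pauling Bond Orders of the original edges, and here I would make explicit the restriction map underlying the proof of the theorem. Using part~(a), every spoke is absent from each Kekulé structure $M'$ of $G'$; consequently each former degree-2 perimeter vertex of $G$ is matched by one of its two original edges, so every vertex of $G$ is matched within $E(G)$. Thus $M' \mapsto M' \cap E(G)$ is a well-defined map from the Kekulé structures of $G'$ to those of $G$. I would then argue that this map is uniformly $2^{|\mathbf{n}|}$-to-one: a preimage of a fixed $M$ is obtained by independently choosing, on each of the $|\mathbf{n}|$ cycles added by the iterated altan, one of its two alternating perfect matchings (the spokes being forbidden), and these $2^{|\mathbf{n}|}$ choices are available regardless of $M$. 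Since for an original edge $e$ one has $e \in M'$ if and only if $e \in M' \cap E(G)$, summing over the fibres gives $K_{G'}(e) = 2^{|\mathbf{n}|} K_G(e)$. Combined with $K' = 2^{|\mathbf{n}|}K$, this yields
\[
p_{G'}(e) = \frac{K_{G'}(e)}{K'} = \frac{2^{|\mathbf{n}|}K_G(e)}{2^{|\mathbf{n}|}K} = \frac{K_G(e)}{K} = p_G(e),
\]
which is the asserted invariance.

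I expect the main obstacle to be the rigorous justification that the restriction map has constant fibre size $2^{|\mathbf{n}|}$ \emph{independent} of the chosen Kekulé structure of $G$ — that is, that the factor $2^{|\mathbf{n}|}$ is distributed uniformly across all Kekulé structures rather than merely in aggregate. This is precisely the combinatorial heart already contained in the (omitted) proof of the preceding theorem: one must verify that forbidding the spokes forces each added cycle to be matched internally, that an even cycle on $2d$ vertices then admits exactly two alternating matchings, and that these choices decouple across the nested tubular layers produced by the iteration. Once that independence is confirmed, all three claims of the corollary follow by the elementary counting above.
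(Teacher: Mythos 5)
Your proof is correct and is precisely the derivation the paper intends: the paper states this corollary with no proof at all (a bare $\square$ following the theorem, which is itself left unproved), and your reduction via the definition $p_{G'}(e)=K_{G'}(e)/K'$ together with parts (a) and (b) of the theorem, plus the $2^{|\mathbf{n}|}$-to-one restriction map $M'\mapsto M'\cap E(G)$ for the invariance claim, is exactly the omitted argument. One small point in your favour: your decoupling argument (each added even cycle forced, by the absence of spokes, to carry one of its two alternating matchings, independently across cycles) also handles new edges on \emph{intermediate} cycles of the iterated construction, which a literal reading of part (b) (``the new perimeter'') would not cover.
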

\noindent Note that graph $A^\mathbf{n}(G)$ was obtained from $G$ by adding new vertices
and edges. Therefore, $G$ is a subgraph of $A^\mathbf{n}(G)$ in a natural way.

\section{Conclusion}
Generalised altans are models of carbon nanostructures that are constructed by attachment
of carbon towers \cite{lukovits2003,sachs1996} to the holes in coronoid patches. Kekul\'{e} structures and Pauling Bond
Orders (and by implication ring currents \cite{fowlermyrvold2011,gomes1979,randic2010}) the nanostructure
can be derived in terms of those of the undecorated structure.

\begin{figure}[!hbt]
\centering
\includegraphics[scale=0.5]{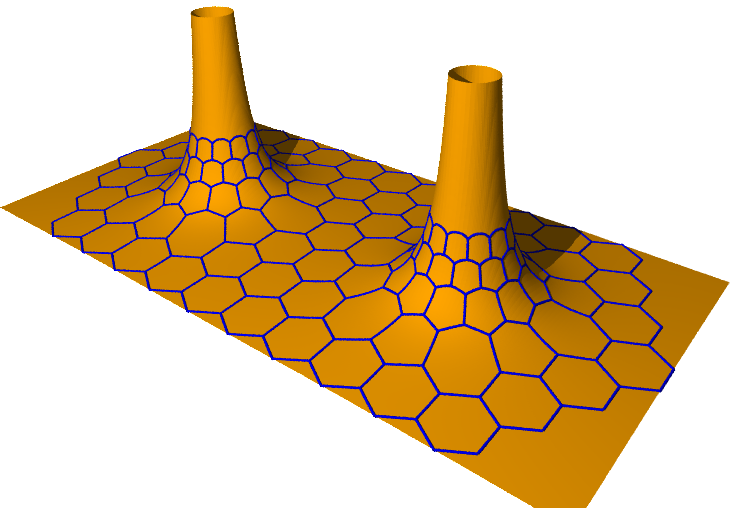}
\caption{Iteration of the altan construction leads to a carbon nanostructure in which nanotubes
grow out of the original holes of the coronoid. A given tube may grow up or down, as a  `chimney-stack'
or a `mine-shaft' on the graphene-like landscape, leading to isomeric structures that share
a common molecular graph.}
\label{fig:towers_color}
\end{figure}

\vspace{\baselineskip}
\noindent
{\bf Acknowledgements:} 
Research of T.\ P.\ and N.\ B.\ is supported in part by the ARRS Grant P1-0294. 

\bibliographystyle{amsplain-nodash}
\bibliography{coronoids_altans}

\providecommand{\bysame}{\leavevmode\hbox to3em{\hrulefill}\thinspace}
\providecommand{\MR}{\relax\ifhmode\unskip\space\fi MR }
\providecommand{\MRhref}[2]{%
  \href{http://www.ams.org/mathscinet-getitem?mr=#1}{#2}
}
\providecommand{\href}[2]{#2}
\begin{thebibliography}{10}

\bibitem{basic2015}
N.~Ba\v{s}i\'{c} and T.~Pisanski, \emph{Iterated altans and their properties},
  MATCH Commun. Math. Comput. Chem. \textbf{74} (2015), no.~3, 653--666.

\bibitem{brinkmann2005}
G.~Brinkmann, O.~Delgado-Friedrichs, and U.~von Nathusius, \emph{Numbers of
  faces and boundary encodings of patches}, Graphs and {D}iscovery, DIMACS Ser.
  Discrete Math. Theoret. Comput. Sci., vol.~69, Amer. Math. Soc., 2005,
  pp.~27--38.

\bibitem{brinkmann2009}
G.~Brinkmann, J.~E. Graver, and C.~Justus, \emph{Numbers of faces in disordered
  patches}, J. Math. Chem. \textbf{45} (2009), no.~2, 263--278.

\bibitem{cruzgutmanrada2012}
R.~Cruz, I.~Gutman, and J.~Rada, \emph{Convex hexagonal systems and their
  topological indices}, MATCH Commun. Math. Comput. Chem. \textbf{68} (2012),
  no.~1, 97--108.

\bibitem{cyvin1991}
S.~J. Cyvin, J.~Brunvoll, and B.~N. Cyvin, \emph{{T}heory of {C}oronoid
  {H}ydrocarbons}, Lecture Notes in Chemistry, vol.~54, Springer-Verlag, 1991.

\bibitem{cyvin1988}
S.~J. Cyvin and I.~Gutman, \emph{{K}ekulé {S}tructures in {B}enzenoid
  {H}ydrocarbons}, Lecture Notes in Chemistry, vol.~48, Springer-Verlag, 1988.

\bibitem{dickensmallion2014a}
T.~K. Dickens and R.~B. Mallion, \emph{{$\pi$}-{E}lectron ring-currents and
  bond-currents in some conjugated altan-structures}, J. Phys. Chem. A
  \textbf{118} (2014), no.~20, 3688--3697.

\bibitem{dickensmallion2014}
T.~K. Dickens and R.~B. Mallion, \emph{Topological {H}{\"
  u}ckel-{L}ondon-{P}ople-{M}c{W}eeny ring currents and bond currents in
  altan-corannulene and altan-coronene}, J. Phys. Chem. A \textbf{118} (2014),
  no.~5, 933--939.

\bibitem{fowlermyrvold2011}
P.~W. Fowler and W.~Myrvold, \emph{The ``anthracene problem'': Closed-form
  conjugated-circuit models of ring currents in linear polyacenes}, J. Phys.
  Chem. A \textbf{115} (2011), no.~45, 13191--13200.

\bibitem{gomes1979}
J.~A. N.~F. Gomes and R.~B. Mallion, \emph{A quasi-topological method for the
  calculation of relative `ring current' intensities in polycyclic, conjugated
  hydrocarbons}, Rev. Port. Quim. \textbf{21} (1979), 82--89.

\bibitem{graver2003}
J.~E. Graver, \emph{{T}he $(m, k)$-patch {B}oundary {C}ode {P}roblem}, MATCH
  Commun. Math. Comput. Chem. \textbf{48} (2003), 189--196.

\bibitem{gravergraves2010}
J.~E. Graver and C.~M. Graves, \emph{Fullerene patches {I}}, Ars Math. Contemp.
  \textbf{3} (2010), no.~1, 109--120.

\bibitem{gravergravesgraves2014}
J.~E. Graver, C.~M. Graves, and S.~J. Graves, \emph{Fullerene patches {II}},
  Ars Math. Contemp. \textbf{7} (2014), no.~2, 405--421.

\bibitem{gravesgraves2014}
C.~Graves and S.~J. Graves, \emph{Counting symmetric and near-symmetric
  fullerene patches}, J. Math. Chem. \textbf{52} (2014), no.~9, 2423--2441.

\bibitem{graves2012}
C.~Graves and J.~McLoud-Mann, \emph{Side lengths of pseudoconvex fullerene
  patches}, Ars Math. Contemp. \textbf{5} (2012), no.~2, 291--302.

\bibitem{extendingpatches2015}
C.~M. Graves, J.~McLoud-Mann, and K.~Stagg~Rovira, \emph{Extending patches to
  fullerenes}, Ars Math. Contemp. \textbf{9} (2015), no.~2, 219--232.

\bibitem{grunbaum2013tilings}
B.~Gr{\" u}nbaum and G.~C. Shephard, \emph{{T}ilings and {P}atterns}, Dover
  Books on Mathematics Series, Dover Publications, Incorporated, 2013.

\bibitem{guo2002}
X.~Guo, P.~Hansen, and M.~Zheng, \emph{Boundary uniqueness of fusenes}, Disc.
  Appl. Math. \textbf{118} (2002), no.~3, 209--222.

\bibitem{gutman2014a}
I.~Gutman, \emph{Altan derivatives of a graph}, Iranian J. Math. Chem.
  \textbf{5} (2014), 85--90.

\bibitem{gutman2014}
I.~Gutman, \emph{Topological properties of altan-benzenoid hydrocarbons}, Serb.
  Chem. Soc. \textbf{79} (2014), no.~12, 1515--1521.

\bibitem{gutman1989}
I.~Gutman and S.~J. Cyvin, \emph{{I}ntroduction to the {T}heory of {B}enzenoid
  {H}ydrocarbons}, Springer-Verlag, 1989.

\bibitem{hammack2011handbook}
R.~Hammack, W.~Imrich, and S.~Klav{\v{z}}ar, \emph{{H}andbook of {P}roduct
  {G}raphs}, CRC press, 2011.

\bibitem{hansen1996}
P.~Hansen, C.~Lebatteux, and M.~Zheng, \emph{The boundary-edges code for
  polyhexes}, J. Mol. Struct. (Theochem) \textbf{363} (1996), no.~2, 237--247.

\bibitem{hatcher2002algebraic}
A.~Hatcher, \emph{{A}lgebraic {T}opology}, Cambridge University Press, 2002.

\bibitem{herndon1987}
W.~C. Herndon and A.~J. Bruce, \emph{Perimeter code for benzenoid aromatic
  hydrocarbons}, Graph Theory and Topology in Chemistry (D.~H. King, R. B.
  \and~Rouvray, ed.), Studies in Physical and Theoretical Chemistry, vol.~51,
  Elsevier Science Ltd, 1987, pp.~491--513.

\bibitem{kovic2014}
J.~Kovič, T.~Pisanski, A.~T. Balaban, and P.~W. Fowler, \emph{On symmetries of
  benzenoid systems}, MATCH Commun. Math. Comput. Chem. \textbf{72} (2014),
  no.~1, 3--26.

\bibitem{lukovits2003}
I.~Lukovits, A.~Graovac, E.~Kalman, G.~Kaptay, P.~Nagy, S.~Nikoli\'{c},
  J.~Sytchev, and N.~Trinajsti\'{c}, \emph{Nanotubes: Number of {K}ekulé
  structures and aromaticity}, J. Chem. Inf. Comput. Sci. \textbf{43} (2003),
  no.~2, 609--614.

\bibitem{monacomemoli2013}
G.~Monaco, M.~Memoli, and R.~Zanasi, \emph{Additivity of current density
  patterns in altan-molecules}, J. Phys. Org. Chem. \textbf{26} (2013), no.~2,
  109--114.

\bibitem{monacozanasi2009}
G.~Monaco and R.~Zanasi, \emph{On the additivity of current density in
  polycyclic aromatic hydrocarbons}, J. Chem. Phys. \textbf{131} (2009), no.~4,
  044126.

\bibitem{monaco2012}
G.~Monaco and R.~Zanasi, \emph{Three contra-rotating currents from a rational
  design of polycyclic aromatic hydrocarbons: altan-corannulene and
  altan-coronene}, J. Phys. Chem. A \textbf{116} (2012), no.~36, 9020--9026.

\bibitem{monaco2013}
G.~Monaco and R.~Zanasi, \emph{Anionic derivatives of altan-corannulene}, J.
  Phys. Org. Chem. \textbf{26} (2013), 730--736.

\bibitem{putz2011}
M.~V. Putz, \emph{Carbon bonding and structures: Advances in physics and
  chemistry}, Carbon Materials: Chemistry and Physics, Springer Netherlands,
  2011.

\bibitem{randic2010}
M.~Randi\'{c}, \emph{Graph theoretical approach to {$\pi$}-electron currents in
  polycyclic conjugated hydrocarbons}, Chem. Phys. Lett. \textbf{500} (2010),
  no.~1--3, 123--127.

\bibitem{sachs1996}
H.~Sachs, P.~Hansen, and M.~L. Zheng, \emph{Kekulé count in tubular
  hydrocarbons}, MATCH Commun. Math. Comput. Chem. (1996), no.~33, 169--241.

\bibitem{thomassen1992}
C.~Thomassen, \emph{The {J}ordan-{S}ch{\"o}nflies theorem and the
  classification of surfaces}, Amer. Math. Monthly \textbf{99} (1992), no.~2,
  116--130.

\end{thebibliography}

\end{document}